\newtheorem{thm}{Theorem}[section]
\newtheorem{lem}[thm]{Lemma}
\newtheorem{pro}[thm]{Proposition}
\theoremstyle{definition}
\newtheorem{defi}[thm]{Definition}
\newtheorem{ex}[thm]{Example}
\newcommand {\emptycomment}[1]{} 
\newcommand{\nc}{\newcommand}
\newcommand{\delete}[1]{}
\nc{\tred}[1]{\textcolor{red}{#1}}
\nc{\tblue}[1]{\textcolor{blue}{#1}}
\nc{\tgreen}[1]{\textcolor{green}{#1}}
\nc{\tpurple}[1]{\textcolor{purple}{#1}}
\nc{\tgray}[1]{\textcolor{gray}{#1}}
\nc{\torg}[1]{\textcolor{orange}{#1}}
\nc{\tmag}[1]{\textcolor{magenta}}
\nc{\btred}[1]{\textcolor{red}{\bf #1}}
\nc{\btblue}[1]{\textcolor{blue}{\bf #1}}
\nc{\btgreen}[1]{\textcolor{green}{\bf #1}}
\nc{\btpurple}[1]{\textcolor{purple}{\bf #1}}
\nc{\tforall}{\ \ \text{for all }} \nc{\hatot}{\,\widehat{\otimes}
\,} \nc{\complete}{completed\xspace} \nc{\wdhat}[1]{\widehat{#1}}
\nc{\ts}{\mathfrak{p}} \nc{\mts}{c_{(i)}\ot d_{(j)}}
\nc{\NA}{{\bf NA}} \nc{\LA}{{\bf Lie}} \nc{\CLA}{{\bf CLA}}
\nc{\cybe}{CYBE\xspace} \nc{\nybe}{NYBE\xspace}
\nc{\ccybe}{CCYBE\xspace}
\nc{\ndend}{pre-Novikov\xspace} \nc{\calb}{\mathcal{B}}
\nc{\rk}{\mathrm{r}}
\nc{\vspa}{\vspace{-.1cm}} \nc{\vspb}{\vspace{-.2cm}}
\nc{\vspc}{\vspace{-.3cm}} \nc{\vspd}{\vspace{-.4cm}}
\nc{\vspe}{\vspace{-.5cm}}
\nc{\disp}[1]{\displaystyle{#1}}
\nc{\bin}[2]{ (_{\stackrel{\scs{#1}}{\scs{#2}}})}  
\nc{\binc}[2]{ \left (\!\! \begin{array}{c} \scs{#1}\\
    \scs{#2} \end{array}\!\! \right )}  
\nc{\bincc}[2]{  \left ( {\scs{#1} \atop
    \vspace{-.5cm}\scs{#2}} \right )}  
\nc{\ot}{\otimes} \nc{\sot}{{\scriptstyle{\ot}}}
\nc{\otm}{\overline{\ot}}
\nc{\ola}[1]{\stackrel{#1}{\la}}
\nc{\scs}[1]{\scriptstyle{#1}} \nc{\mrm}[1]{{\rm #1}}
\nc{\dirlim}{\displaystyle{\lim_{\longrightarrow}}\,}
\nc{\invlim}{\displaystyle{\lim_{\longleftarrow}}\,}
\nc{\bfk}{{\bf k}} \nc{\bfone}{{\bf 1}} \nc{\rpr}{\circ}
\nc{\dpr}{{\tiny\diamond}} \nc{\rprpm}{{\rpr}}
\nc{\calao}{{\mathcal A}} \nc{\cala}{{\mathcal A}}
\nc{\calc}{{\mathcal C}} \nc{\cald}{{\mathcal D}}
\nc{\cale}{{\mathcal E}} \nc{\calf}{{\mathcal F}}
\nc{\calfr}{{{\mathcal F}^{\,r}}} \nc{\calfo}{{\mathcal F}^0}
\nc{\calfro}{{\mathcal F}^{\,r,0}} \nc{\oF}{\overline{F}}
\nc{\calg}{{\mathcal G}} \nc{\calh}{{\mathcal H}}
\nc{\cali}{{\mathcal I}} \nc{\calj}{{\mathcal J}}
\nc{\call}{{\mathcal L}} \nc{\calm}{{\mathcal M}}
\nc{\caln}{{\mathcal N}} \nc{\calo}{{\mathcal O}}
\nc{\calp}{{\mathcal P}} \nc{\calq}{{\mathcal Q}}
\nc{\calr}{{\mathcal R}} \nc{\calt}{{\mathcal T}}
\nc{\caltr}{{\mathcal T}^{\,r}} \nc{\calu}{{\mathcal U}}
\nc{\calv}{{\mathcal V}} \nc{\calw}{{\mathcal W}}
\nc{\calx}{{\mathcal X}} \nc{\CA}{\mathcal{A}}
\nc{\fraka}{{\mathfrak a}} \nc{\frakB}{{\mathfrak B}}
\nc{\frakb}{{\mathfrak b}} \nc{\frakd}{{\mathfrak d}}
\nc{\oD}{\overline{D}} \nc{\frakF}{{\mathfrak F}}
\nc{\frakg}{{\mathfrak g}} \nc{\frakm}{{\mathfrak m}}
\nc{\frakM}{{\mathfrak M}} \nc{\frakMo}{{\mathfrak M}^0}
\nc{\frakp}{{\mathfrak p}} \nc{\frakS}{{\mathfrak S}}
\nc{\frakSo}{{\mathfrak S}^0} \nc{\fraks}{{\mathfrak s}}
\nc{\os}{\overline{\fraks}} \nc{\frakT}{{\mathfrak T}}
\nc{\oT}{\overline{T}}
\nc{\frakX}{{\mathfrak X}} \nc{\frakXo}{{\mathfrak X}^0}
\nc{\frakx}{{\mathbf x}}
\nc{\frakTx}{\frakT}      
\nc{\frakTa}{\frakT^a}        
\nc{\frakTxo}{\frakTx^0}   
\nc{\caltao}{\calt^{a,0}}   
\nc{\ox}{\overline{\frakx}} \nc{\fraky}{{\mathfrak y}}
\nc{\frakz}{{\mathfrak z}} \nc{\oX}{\overline{X}}
\title[Non-abelian extensions and Wells exact sequences of Lie-Yamaguti algebras ]
{ Non-abelian extensions and Wells exact sequences of Lie-Yamaguti algebras }
\author{ Qinxiu Sun}
\address{Department of Mathematics, Zhejiang University of Science and Technology, Hangzhou, 310023} \email{qxsun@126.com}
\author{Zhen Li}
\address{Department of Mathematics, Zhejiang University of Science and Technology, Hangzhou, 310023}
         \email{lz200002272022@163.com}
\subjclass[2010]{17A30, 17A36, 17A40, 17B99}
\keywords{Lie-Yamaguti algebra, non-abelian extension, Maurer-Cartan element, extensibility, Wells exact sequence}
\begin{document}
\begin{abstract}
The goal of the present paper is to investigate
 non-abelian extensions of Lie-Yamaguti algebras
 and explore extensibility of a pair of automorphisms about a non-abelian extension of Lie-Yamaguti algebras.
 First, we study non-abelian extensions of Lie-Yamaguti algebras
 and classify the non-abelian extensions in terms of non-abelian cohomology groups. Next, we characterize the non-abelian
 extensions in terms of Maurer-Cartan elements. Moreover, we discuss the equivalent conditions of the extensibility of
 a pair of automorphisms about a non-abelian extension of Lie-Yamaguti algebras,
and derive the fundamental sequences of Wells in the context of Lie-Yamaguti algebras. Finally,
we discuss the previous results in the case of abelian extensions of Lie-Yamaguti algebras.

\end{abstract}

\maketitle

\vspace{-1.2cm}

\tableofcontents

\vspace{-1.2cm}

\allowdisplaybreaks

\section{Introduction}
A Lie-Yamaguti algebra first appeared in Nomizu’s work on the affine invariant connections on homogeneous
spaces in 1950's \cite{10}, which was a generalization of Lie algebras and Lie triple systems.
In 1960’s, Yamaguti introduced an algebraic
structure and named it a general Lie triple system or a Lie triple algebra \cite{29,30}. Later,
the cohomology theory of this object was investigated in \cite{29} by Yamaguti.
In the study of Courant algebroids, Kinyon and Weinstein named general Lie triple systems by Lie–Yamaguti algebras \cite{15}.
 Since then, Lie-Yamaguti algebras have attracted much attention and are widely explored.
  For example, the irreducible modules of Lie–Yamaguti algebras were considered in
\cite{5, 28}, deformations and extensions of Lie-Yamaguti algebras were investigated in \cite{17,31}.
The relative Rota-Bxter operators on Lie-Yamaguti algebras and their cohomologies were considered in \cite{25, 8,9}.

Extensions are useful mathematical objects to understand the
underlying structures. The non-abelian extension is a relatively
general one among various extensions (e.g. central extensions,
abelian extensions, non-abelian extensions etc.). Non-abelian
extensions were first developed by Eilenberg and Maclane \cite{018},
which induced to the low dimensional non-abelian cohomology group.
Then numerous works have been devoted to non-abelian extensions of
various kinds of algebras, such as Lie (super)algebras, Leibniz algebras, Lie 2-algebras, Lie Yagamuti algebras,
associative conformal algebras, Rota-Baxter groups,
Rota-Baxter Lie algebras and Rota-Baxter Leibniz algebras, see \cite{ 08, 014, 048, 021, 023, 027,029, 028,047} and their references.
The abelian extensions of Lie Yagamuti algebras were considered
in \cite{021}. But little is known about the non-abelian extensions of Lie Yagamuti algebras. This is the first motivation for writing this paper.

Another interesting study related to extensions of algebraic structures is given by the extensibility or inducibility of a pair
of automorphisms. When a pair of automorphisms is inducible? This problem was first
 considered by Wells \cite{037} for abstract groups and further studied in \cite{045,032}.
Since then, several authors have studied this subject further, see \cite{047,021,023,026,028} and references therein.
The extensibility problem of a pair of derivations on abelian extensions was investigated in \cite{13,035}.
Recently, the extensibility problem of a pair of derivations and automorphisms was extended to
the context of abelian extensions of Lie coalgebras \cite{016}. As byproducts, the Wells short exact sequences were obtained for
various kinds of algebras \cite{014, 015,021, 023,026,045,028,047}, which connected the relative automorphism groups and
the non-abelian second cohomology groups. Inspired by these results, we investigate extensibility of a pair of automorphisms on a non-abelian
extension of Lie Yagamuti algebras. This is another motivation for writing the present paper. Moreover, we give necessary and sufficient conditions for
 a pair of automorphisms to be extensible, and derive the analogue of the Wells short exact sequences in the context of
  non-abelian extensions of Lie Yagamuti algebras.

The paper is organized as follows. In Section 2, we recall the definition of Lie-Yamaguti algebras
and their representations. We also recall some basic information about the cohomology groups of
Lie-Yamaguti algebras. In Section 3, we
investigate non-abelian extensions and classify the non-abelian
extensions using the non-abelian cohomology groups. In Section 4, we characterize equivalent non-abelian extensions
using Maurer-Cartan elements. In Section 5, we study
the extensibility problem of a pair of automorphisms about a non-abelian extension of Lie-Yamaguti algebras.
In Section 6, we derive Wells short exact sequences in the context of non-abelian
extensions of Lie-Yamaguti algebras. Finally,
we discuss the previous results in the case of abelian extensions of Lie-Yamaguti algebras.

Throughout the paper, let $k$ be a field. Unless otherwise
specified, all vector spaces and algebras are over $k$.

\setlength{\baselineskip}{1.25\baselineskip}


\section{Preliminaries on Lie-Yamaguti algebras}

We recall the notions of Lie-Yamaguti algebras, representations
and their cohomology theory. For the details see \cite{29,30}.

\begin{defi} A Lie-Yamaguti algebra is a vector space $\mathfrak g$ with a bilinear map $[ \cdot, \cdot]:
\mathfrak g \otimes \mathfrak g\longrightarrow \mathfrak g$ and a trilinear map
$\{ \ , \ , \ \}:\mathfrak g\otimes \mathfrak g\otimes \mathfrak g\longrightarrow \mathfrak g$,
satisfying
\begin{equation}\label{eq2.1}[x_1,x_2]+[x_2,x_1]=0,~~\{x_1,x_2,x_3\}+\{x_2,x_1,x_3\}=0,\end{equation}
\begin{equation}\label{eq2.3}[[x_1,x_2],x_3]+[[x_2,x_3],x_1]+[[x_3,x_1],x_2]+\{x_1,x_2,x_3\}+
\{x_2,x_3,x_1\}+\{x_3,x_1,x_2\}=0,\end{equation}
\begin{equation}\label{eq2.4}\{[x_1,x_2],x_3,y_1\}+\{[x_2,x_3],x_1,y_1\}+\{[x_3,x_1],x_2,y_1\}=0,\end{equation}
\begin{equation}\label{eq2.5}\{x_1,x_2,[y_1,y_2]\}=[\{x_1,x_2,y_1\},y_2]+[y_1,\{x_1,x_2,y_2\}],
\end{equation}
\begin{equation}\label{eq2.6}\{x_1,x_2,\{y_1,y_2,y_3\}\}=\{\{x_1,x_2,y_1\},y_2,y_3\}+\{y_1,\{x_1,x_2,y_2\},y_3\}+\{y_1,y_2,\{x_1,x_2,y_3\}\},\end{equation}
for all $x_1,x_2,x_3,y_1,y_2,y_3\in \mathfrak g$.
 Denote it by $(\mathfrak g,[  \  ,  \ ],\{  \  , \ , \ \})$ or simply by $\mathfrak g $.
\end{defi}

A subspace $L$ of $\mathfrak g$ is an ideal of $\mathfrak g$ if
$[L,\mathfrak g] \subseteq L, \{L, \mathfrak g,\mathfrak g \} \subseteq L$ and $\{\mathfrak g,\mathfrak g, L\} \subseteq L$.
 An ideal $L$ of $\mathfrak g$ is said to be an abelian ideal of $\mathfrak g$ if
$[L,L]=0$ and $\{L, L,\mathfrak g \} = \{\mathfrak g,L, L \} =\{L,\mathfrak g,L\}=0$.

\begin{defi} A representation of a Lie-Yamaguti algebra $\mathfrak g$ consists of a vector
space $V$ together with a linear map $\mu:\mathfrak g\longrightarrow {gl}(V)$
and bilinear maps $\theta,D:\mathfrak g\wedge \mathfrak g\longrightarrow {gl}(V)$
satisfying
\begin{equation}\label{eq2.9}\theta([x_1,x_2],x_3)=\theta(x_1,x_3)\mu(x_2)-\theta(x_2,x_3)\mu(x_1),\end{equation}
\begin{equation}\label{eq2.10}[D(x_1,x_2),\mu(y_1)]=\mu(\{x_1,x_2,y_1\}),\end{equation}
\begin{equation}\label{eq2.11}\theta(x_1,[y_1,y_2])=\mu(y_1)\theta(x_1,y_2)-\mu(y_2)\theta(x_1,y_1),\end{equation}
\begin{equation}\label{eq2.12}[D(x_1,x_2),\theta(y_1,y_2)]=\theta(\{x_1,x_2,y_1\},y_2)+\theta(y_1,\{x_1,x_2,y_2\}),\end{equation}
\begin{equation}\label{eq2.13}\theta(x_1,\{y_1,y_2,y_3\})
=\theta(y_2,y_3)\theta(x_1,y_1)-\theta(y_1,y_3)\theta(x_1,y_2)+D(y_1,y_2)\theta(x_1,y_3),\end{equation}
\begin{equation}\label{eq2.7}D(x_1,x_2)-\theta(x_2,x_1)+\theta(x_1,x_2)+\mu([x_1,x_2])-[\mu(x_1),\mu(x_2)]=0,\end{equation}
for all $x_1,x_2,x_3,y_1,y_2,y_3\in \mathfrak g$. Denote the representation of $\mathfrak g$ by
$(V,\mu,\theta,D)$ or simply by $V$.\end{defi}

When $(V,\mu,\theta,D)$ is a representation
 of $\mathfrak g$, by a direct computation, the following conditions are
 also satisfied:
\begin{equation}\label{eq2.8}D([x_1,x_2],x_3)+D([x_2,x_3],x_1)+D([x_3,x_1],x_2)
=0,\end{equation}
\begin{equation}\label{eq2.70}D(\{x_1,x_2,x_3\},x_4)+D(x_3,\{x_1,x_2,x_4\})=[D(x_1,x_2),D(x_3,x_4)]
,\end{equation}
\begin{equation}\label{eq2.71}\theta(\{y_1,y_2,y_3\},x_1)=\theta(y_1,x_1)\theta(y_3,y_2)-\theta(y_2,x_1)\theta(y_3,y_1)
-\theta(y_3,x_1)D(y_1,y_2).\end{equation}

\begin{pro} Let $(\mathfrak g,[  \  ,  \ ]_{\mathfrak g},\{  \  , \ , \ \}_{\mathfrak g})$ be
a Lie-Yamaguti algebra and $V$ a vector space. Assume that $\mu:\mathfrak g\longrightarrow {gl}(V)$
is a linear map and $\theta,D:\mathfrak g\wedge \mathfrak g\longrightarrow {gl}(V)$ are bilinear maps.
Then $(V,\mu,\theta,D)$ is a representation of $\mathfrak g$ if and only
if $(\mathfrak g\oplus V,[  \  ,  \ ],\{  \  , \ , \ \})$ is a Lie-Yamaguti algebra, where
\begin{equation*}\{x+u,y+v,z+w\}=\{x,y,z\}_{\mathfrak g}+\theta(y,z)u-\theta(x,z)v+D(x,y)w,\end{equation*}
and
\begin{equation*}[x+u,y+v]=[x,y]_{\mathfrak g}+\mu(x)v-\mu(y)u,\end{equation*}
for all $x,y,z\in T,u,v,w\in V$.
The Lie-Yamaguti algebra $(\mathfrak g\oplus V,[  \  ,  \ ],\{  \  , \ , \ \})$ is called
the semidirect product Lie-Yamaguti algebra. Denote it simply by $\mathfrak g\ltimes V$.
\end{pro}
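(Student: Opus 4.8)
The proof is a direct verification, organized by the standard ``separate components'' method. The plan is to evaluate each Lie-Yamaguti axiom \eqref{eq2.1}--\eqref{eq2.6} on arbitrary elements $x_i+u_i$ (and $y_j+v_j$) of $\mathfrak g\oplus V$, substitute the two displayed formulas for the bracket and the triple product, expand, and split the resulting identity into its $\mathfrak g$-component and its $V$-component. Since $\mathfrak g$ is by hypothesis a Lie-Yamaguti algebra, the $\mathfrak g$-component of each axiom reduces to that same axiom for $(\mathfrak g,[\cdot,\cdot]_{\mathfrak g},\{\cdot,\cdot,\cdot\}_{\mathfrak g})$ and hence holds automatically. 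Therefore $(\mathfrak g\oplus V,[\cdot,\cdot],\{\cdot,\cdot,\cdot\})$ is a Lie-Yamaguti algebra if and only if the $V$-component of every one of \eqref{eq2.1}--\eqref{eq2.6} vanishes identically, and the proposition follows once this system of $V$-valued identities is shown to be equivalent to the representation axioms \eqref{eq2.9}--\eqref{eq2.7}.

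I would establish that equivalence axiom by axiom, in each case collecting the coefficient of an independent vector $u_i$ or $v_j$ in $V$. The first relation of \eqref{eq2.1} is automatic, while the second gives on the $V$-side $(D(x_1,x_2)+D(x_2,x_1))u_3=0$, i.e.\ the skew-symmetry of $D$, which is built into the hypothesis $D\colon\mathfrak g\wedge\mathfrak g\to\mathfrak{gl}(V)$. The $V$-part of \eqref{eq2.3}, read off as the coefficient of each $u_i$, is precisely \eqref{eq2.7}. For \eqref{eq2.4}, the coefficient of $u_i$ reproduces \eqref{eq2.9} (after relabelling) and the coefficient of $v_1$ reproduces \eqref{eq2.8}. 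For \eqref{eq2.5}, the coefficients of $v_1,v_2$ give \eqref{eq2.10} and those of $u_1,u_2$ give \eqref{eq2.11}. For \eqref{eq2.6}, the coefficients of $u_1,u_2$ give \eqref{eq2.13}, those of $v_1,v_2$ give \eqref{eq2.12}, and the coefficient of $v_3$ gives \eqref{eq2.70}. The ``only if'' direction then consists of running these equivalences backwards, specialising all but one of the auxiliary vectors to $0$ to isolate each representation axiom; the ``if'' direction consists of reassembling the full $V$-parts out of \eqref{eq2.9}--\eqref{eq2.7} together with the auxiliary identities \eqref{eq2.8} and \eqref{eq2.70}, which may be invoked since, by the discussion preceding the proposition, they are consequences of \eqref{eq2.9}--\eqref{eq2.7}.

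The one genuinely heavy step is the expansion of \eqref{eq2.6}: substituting the displayed operations into a ``triple-inside-a-triple'' identity in five $\mathfrak g\oplus V$-variables produces on the order of two dozen $V$-valued terms, and one must track the five coefficient slots $u_1,u_2,v_1,v_2,v_3$ without error (the $\mathfrak g$-slot being taken care of by \eqref{eq2.6} for $\mathfrak g$). A secondary point is that several Lie-Yamaguti axioms deliver a given representation axiom with ``multiplicity'' --- for instance both the $u_1$- and $u_2$-coefficients of \eqref{eq2.6} collapse to \eqref{eq2.13}, and its $v_3$-coefficient yields the auxiliary relation \eqref{eq2.70} rather than a new defining one --- so one should check these are mutually consistent, which is guaranteed precisely because \eqref{eq2.8}, \eqref{eq2.70} (and \eqref{eq2.71}) are consequences of the representation axioms. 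Everything else is routine bilinear and trilinear bookkeeping.
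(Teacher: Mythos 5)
Your proof is correct and follows the standard component-by-component verification; the paper states this proposition without proof, but your method is exactly the one it uses for the more general Proposition \ref{LY}, and your matching of coefficient slots to representation axioms --- including the need for the derived identities (\ref{eq2.8}) and (\ref{eq2.70}) in the direction from representation to Lie--Yamaguti algebra, which the paper records just before the proposition --- is accurate. One small slip: you have interchanged the labels ``if'' and ``only if'' relative to the statement as written (isolating the representation axioms from the $V$-components proves the ``if'' direction, while reassembling the $V$-components from (\ref{eq2.9})--(\ref{eq2.7}) together with (\ref{eq2.8}) and (\ref{eq2.70}) proves ``only if''), but this does not affect the mathematics.
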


\begin{ex} Let $\mathfrak g$ be a Lie-Yamaguti algebra. Define ${ad}:\mathfrak g\longrightarrow {gl}(\mathfrak g),
~R,L:\mathfrak g \wedge \mathfrak g\longrightarrow {gl}(\mathfrak g)$
respectively by ${ad}(x)(y)=[x,y],R(x,y)(z)=\{z,x,y\}$ and $L(x,y)(z)=\{x,y,z\}$. Then
$(\mathfrak g,{ad},R,L)$ is a representation of $\mathfrak g$, which is called the adjoint
representation.\end{ex}

Next, we recall the cohomology of Lie-Yamaguti algebras following \cite{30}.
Let $\mathfrak g$ be a Lie-Yamaguti algebra and $(V,\mu,\theta,D)$
be its representation. The cochain complex is given as follows:
\begin{itemize}[leftmargin=15pt]
\item Set $C^{1}(\mathfrak g,V)=\mathrm{Hom} (\mathfrak g,V)$ and $C^{0}(\mathfrak g,V)$ be the subspace spanned by the diagonal elements $(f,f)\in C^{1}(\mathfrak g,V)\times C^{1}(\mathfrak g,V)$.
\item (For $n\geq 2$)~Set $C^{n}(\mathfrak g,V)=\mathrm{Hom} (\otimes^{n}\mathfrak g,V)$, where the space of all
$n$-linear maps $f\in C^{n}(\mathfrak g,V)$ satisfying
\begin{equation*}f(x_1,\cdot\cdot\cdot,x_{2i-1},x_{2i},\cdot\cdot\cdot,x_{n})=0,~~\hbox{if} ~x_{2i-1}=x_{2i},~\forall~i=1,2,\cdot\cdot\cdot,[\frac{n}{2}].\end{equation*}
\end{itemize}

Then, for all $n\geq 1$, put the $(2n,2n+1)$-cochain groups
\begin{equation*}C^{(2n,2n+1)}(\mathfrak g,V)=C^{2n}(\mathfrak g,V)\times
C^{2n+1}(\mathfrak g,V)\end{equation*}
and
\begin{equation*}C^{(3,4)}(\mathfrak g,V)=C^{3}(\mathfrak g,V)\times
C^{4}(\mathfrak g,V).\end{equation*}
Define the coboundary operator $\delta=(\delta_I,\delta_{II})$ in the following cochain
complex of $\mathfrak g$ with coefficents in $V$:
\[\xymatrix@C=20pt@R=20pt{C^{0}(\mathfrak g,V)\ar[r]^-{\delta}&C^{(2,3)}(\mathfrak g,V)\ar[d]_-{\delta^{*}}\ar[r]^-\delta&C^{(4,5)}(\mathfrak g,V)\ar[r]^-\delta&C^{(6,7)}(\mathfrak g,V)
\ar[r]^-\delta&\cdot\cdot\cdot
\\&C^{(3,4)}(\mathfrak g,V)&&&.}\]
For $n\geq 1$, the coboundary operator $\delta=(\delta_I,\delta_{II}):C^{(2n,2n+1)}(\mathfrak g,V)\longrightarrow C^{(2n+2,2n+3)}(\mathfrak g,V)$ is
 defined by
\begin{eqnarray*}&&(\delta_I f)(x_1,\cdot\cdot\cdot,x_{2n+2})\\&=&
\mu(x_{2n+1})g(x_1,\cdot\cdot\cdot,x_{2n},x_{2n+2})-\mu(x_{2n+2})g(x_1,\cdot\cdot\cdot,x_{2n+1})-
g(x_1,\cdot\cdot\cdot,x_{2n},[x_{2n+1},x_{2n+2}])\\&&+
\sum_{k=1}^{n}(-1)^{n+k+1}D(x_{2k-1},x_{2k})f(x_1,\cdot\cdot\cdot,x_{2k-2},x_{2k+1},\cdot\cdot\cdot,x_{2n+2})
\\&&+
\sum_{k=1}^{n}\sum_{j=2k+1}^{2n+2}(-1)^{n+k}f(x_1,\cdot\cdot\cdot,x_{2k-2},x_{2k+1},\cdot\cdot\cdot,\{x_{2k-1},x_{2k},x_{j}\},\cdot\cdot\cdot,x_{2n+2})
\end{eqnarray*}
and
\begin{eqnarray*}&&(\delta_{II} g)(x_1,\cdot\cdot\cdot,x_{2n+3})\\&=&
\theta(x_{2n+2},x_{2n+3})g(x_1,\cdot\cdot\cdot,x_{2n+1})-\theta(x_{2n+1},x_{2n+3})g(x_1,\cdot\cdot\cdot,x_{2n},x_{2n+2})
\\&&+\sum_{k=1}^{n+1}(-1)^{n+k+1}D(x_{2k-1},x_{2k})g(x_1,\cdot\cdot\cdot,x_{2k+1},\cdot\cdot\cdot,x_{2n+3})
\\&&+
\sum_{k=1}^{n+1}\sum_{j=2k+1}^{2n+3}(-1)^{n+k}g(x_1,\cdot\cdot\cdot,x_{2k+1},\cdot\cdot\cdot,\{x_{2k-1},x_{2k},x_{j}\},\cdot\cdot\cdot,x_{2n+3})
\end{eqnarray*}
for any pair $(f,g)\in C^{(2n,2n+1)}(\mathfrak g,V)$ and
$x_1,\cdot\cdot\cdot,x_{2n+3}\in \mathfrak g$. And when $n=0$, the coboundary
operator
$$\delta=(\delta_I,\delta_{II}):
C^{0}(\mathfrak g,V)\longrightarrow C^{2}(\mathfrak g,V)\times C^{3}(\mathfrak g,V)$$ is defined
 as follows:
$$(\delta_I f)(x_1,x_{2})=
\mu(x_{1})f(x_2)-\mu(x_{2})f(x_1)-
f([x_{1},x_{2}]),
$$
$$(\delta_{II} f)(x_1,x_2,x_{3})=
\theta(x_2,x_{3})f(x_1)-\theta(x_{1},x_{3})f(x_{2})
+D(x_{1},x_{2})f(x_{3})-
f(\{x_{1},x_{2},x_{3}\})
$$ for any $f\in C^{1}(\mathfrak g,V)$ and $x_1,x_2,x_{3}\in \mathfrak g$.

Define $\delta^{*}=(\delta_{I}^{*},\delta_{II}^{*}): C^{(2,3)}(\mathfrak g,V)\longrightarrow  C^{(3,4)}(\mathfrak g,V)$ by
\begin{align*}\delta_{I}^{*}f(x,y,z)=&-\mu(x)f(y,z)-\mu(y)f(z,x)-\mu(z)f(x,y)+f([x,y],z)+f([y,z],x)
\\&+f([z,x],y)+g(x,y,z)+g(y,z,x)+g(z,x,y),
\end{align*}
\begin{align*}\delta_{II}^{*}f(x,y,z,w)=&\theta(x,w)f(y,z)+\theta(y,w)f(z,x)+\theta(z,w)f(x,y)+g([x,y],z,w)
\\&+g([y,z],x,w)+g([z,x],y,w),
\end{align*}
for all $(f,g)\in C^{(2,3)}(\mathfrak g,V)$ and $x,y,z,w\in \mathfrak g$.

Denote the set of all the $(2n,2n+1)$-cocycles and the
$(2n,2n+1)$-coboundaries, respectively by $Z^{(2n,2n+1)}(\mathfrak g, V )$ and  $B^{(2n,2n+1)}(\mathfrak g, V ) $.
In particular,
\begin{equation*}Z^{(2,3)}(\mathfrak g, V )=\{(f,g)\in C^{(2,3)}(\mathfrak g, V )|\delta(f,g)=0,\delta^{*}(f,g)=0\},\end{equation*}
\begin{equation*}B^{(2,3)}(\mathfrak g, V )=\{(\delta_{I}(f),\delta_{II}(f)) |f\in C^{1}(\mathfrak g, V )\}.\end{equation*}
Define
$H^{1}(\mathfrak g, V )= \{f\in C^{1}(\mathfrak g, V )|\delta_{I}(f)=0,\delta_{II}(f)=0\} $, which is called the first
cohomology group of $\mathfrak g$ with coefficients in
the representation $V$ and define
$$H^{(2n,2n+1)}(\mathfrak g, V )= Z^{(2n,2n+1)}(\mathfrak g, V )/B^{(2n,2n+1)}(\mathfrak g, V ),~~n\geq 1$$ which is
called the (2n,2n+1)-cohomology group of $\mathfrak g$ with coefficients in
the representation $V$.


\section{Non-abelian
extensions and non-abelian (2,3)-cocycles of Lie-Yamaguti algebras}

In this section, we are devoted to considering non-abelian
extensions and non-abelian (2,3)-cocycles of Lie-Yamaguti algebras.

\begin{defi} Let $\mathfrak g$ and $\mathfrak h$ be two Lie-Yamaguti algebras. A non-abelian
extension of $\mathfrak g$ by $\mathfrak h$ is a Lie-Yamaguti algebra $\hat{\mathfrak g}$,
which fits into a short exact sequence of Lie-Yamaguti algebras
$$\mathcal{E}:0\longrightarrow\mathfrak h\stackrel{i}{\longrightarrow} \hat{\mathfrak g}\stackrel{p}{\longrightarrow}\mathfrak g\longrightarrow0.$$
When $\mathfrak h$ is an abelian ideal of $\hat{\mathfrak g}$, the extension $\mathcal{E}$ is called an
abelian extension of $\mathfrak g$ by $\mathfrak h$.
Denote an extension as above simply by $\hat{\mathfrak g}$ or $\mathcal{E}$. A section of $p$ is a linear map $s: \mathfrak g\longrightarrow  \hat{\mathfrak g} $
such that $ps = I_{\mathfrak g}$.
\end{defi}

\begin{defi}
Let $ \hat{\mathfrak g}_1$ and $\hat{\mathfrak g}_2$
be two non-abelian extensions of $\mathfrak g$ by $\mathfrak h$. They are said to be
equivalent if there is a homomorphism of Lie-Yamaguti algebras
$f:\hat{\mathfrak g}_1\longrightarrow \hat{\mathfrak g}_2$ such that
the following commutative diagram holds:
 \begin{equation}\label{Ene1} \xymatrix{
  0 \ar[r] & \mathfrak h\ar@{=}[d] \ar[r]^{i_1} & \hat{\mathfrak g}_1\ar[d]_{f} \ar[r]^{p_1} & \mathfrak g \ar@{=}[d] \ar[r] & 0\\
 0 \ar[r] & \mathfrak h \ar[r]^{i_2} & \hat{\mathfrak g}_2 \ar[r]^{p_2} & \mathfrak g  \ar[r] & 0
 .}\end{equation}
\end{defi}
Denote by $\mathcal{E}_{nab}(\mathfrak g,\mathfrak h)$ the set of all non-abelian extensions of $\mathfrak g$ by $\mathfrak h$.

Next, we define a non-abelian cohomology group and show that
the non-abelian extensions are classified by the non-abelian cohomology groups.

\begin{defi} Let $\mathfrak g$ and $\mathfrak h$ be two Lie-Yamaguti algebras.
A non-abelian (2,3)-cocycle on $\mathfrak g$ with values in
 $\mathfrak h$ is a septuple $(\chi,\omega,\mu,\theta,D,\rho,T)$
of maps such that $\omega:\mathfrak g\otimes\mathfrak g\otimes\mathfrak
g\longrightarrow \mathfrak h$ is trilinear, $\chi:\mathfrak g\otimes\mathfrak g\longrightarrow \mathfrak h,
~\theta,D:\mathfrak g\wedge \mathfrak
g\longrightarrow \mathfrak{gl}(\mathfrak h)$ are bilinear
and $\mu:\mathfrak g\longrightarrow \mathfrak{gl}(\mathfrak h) ,
~\rho,T:\mathfrak g\longrightarrow
\mathrm{Hom} (\mathfrak h\wedge \mathfrak h,\mathfrak h)$ are linear,
and the following five parts of identities are satisfied for all
$x_i,y_i, x, y, z\in \mathfrak g~ (i=1,2,3), a,b,c\in \mathfrak h$,
\begin{itemize}[leftmargin=15pt]
	\item Those resembling Eq.~(\ref{eq2.1}):
\begin{equation}\label{L00}\chi([x,y]_{\mathfrak
g}+\chi(y,x)
=0,~~\omega(x,y,z)+\omega(y,x,z)=0,
\end{equation}
\begin{equation}\label{L01}D(x,y)a+D(y,x)a=0,~T(x)(a,b)+T(x)(b,a)=0
.\end{equation}
\item Those resembling Eq.~(\ref{eq2.3}):
\begin{align}\label{L12}&\chi([x,y]_{\mathfrak
g},z)-\mu(z)\chi(x,y)+\omega(x,y,z)
+\chi([y,z]_{\mathfrak
g},x)-\mu(x)\chi(y,z)+\omega(y,z,x)
\nonumber\\&+\chi([z,x]_{\mathfrak
g},y)-\mu(y)\chi(z,x)+\omega(z,x,y)
=0,\end{align}
\begin{equation}\label{L13}\mu([x,y]_{\mathfrak
g})a+[\chi(x,y),a]_{\mathfrak
h}+D(x,y)a-\mu(x)\mu(y)a-\theta(y,x)a+\mu(y)\mu(x)a+\theta(x,y)a
=0,\end{equation}
\begin{equation}\label{L14}[\mu(x)a,b]_{\mathfrak
h}+\rho(x)(a,b)
-\mu(x)[a,b]_{\mathfrak
h}+T(x)(a,b)-[\mu(x)b,a]_{\mathfrak
h}-\rho(x)(b,a)=0.
 \end{equation}
\item Those resembling Eq.~(\ref{eq2.4}):
\begin{equation}\label{L15}\theta(z,w)\chi(x,y)+\theta(x,w)\chi(y,z)+\theta(y,w)\chi(z,x)=0,
 \end{equation}
\begin{align}\label{L16}&D([x,y]_{\mathfrak
g},z)a-\rho(z)(\chi(x,y),a)
+D([y,z]_{\mathfrak
g},x)a-\rho(x)(\chi(y,z),a)
\nonumber\\&+D([z,x]_{\mathfrak
g},y)a-\rho(y)(\chi(z,x),a)=0,
 \end{align}
\begin{equation}\label{L17}\theta([x,y]_{\mathfrak
g},z)a-T(z)(\chi(x,y),a)
-\theta(x,z)\mu(y)
a+\theta(y,z)\mu(x)
a=0,\end{equation}
\begin{equation}\label{L18}\rho([x,y]_{\mathfrak
g})(a,b)+\{\chi(x,y),a,b\}_{\mathfrak
h}-\rho(x)(\mu(y)a,b)+\rho(y)(\mu(x)a,b)=0,
 \end{equation}
\begin{equation}\label{L19}T(y)(\mu(x)a,b)+\theta(x,y)[a,b]_{\mathfrak
h}-T(y)(\mu(x)b,a)=0,
 \end{equation}
\begin{equation}\label{L20}\{\mu(x)a,b,c\}_{\mathfrak
h}-\rho(x)([a,b]_{\mathfrak
h},c)-\{\mu(x)b,a,c\}_{\mathfrak
h}=0,\end{equation}
\begin{equation}\label{L21}T(x)([a,b]_{\mathfrak
h},c)+T(x)([b,c]_{\mathfrak
h},a)+T(x)([c,a]_{\mathfrak
h},b)=0.\end{equation}
\item Those resembling Eq.~(\ref{eq2.5}):
\begin{equation}\label{L22}D(x,y)\chi(z,w)=\chi(\{x,y,z\}_{\mathfrak
g},w)-\mu(w)\omega(x,y,z)+\mu(z)\omega(x,y,w)+\chi(z,\{x,y,w\}_{\mathfrak
g}),\end{equation}
\begin{equation}\label{L23}D(x,y)\mu(z)a=\mu(\{x,y,z\}_{\mathfrak
g})a+[\omega(x,y,z),a]_{\mathfrak
h}+\mu(z)D(x,y)a,
\end{equation}
\begin{equation}\label{L24}\theta(x,[y,z]_{\mathfrak
g})a-\rho(x)(a,\chi(y,z))=\mu(y)\theta(x,z)a-\mu(z)\theta(x,y)a,
\end{equation}
\begin{equation}\label{L25}D(x,y)[a,b]_{\mathfrak
h}=[D(x,y)a,b]_{\mathfrak
h}+[a,D(x,y)b]_{\mathfrak
h},\end{equation}
\begin{equation}\label{L26}\rho(x)(a,\mu(y)b)+[\theta(x,y)a,b]_{\mathfrak
h}-\mu(y)\rho(x)(a,b)=0,
\end{equation}
\begin{equation}\label{L27}T([x,y]_{\mathfrak
g})(a,b)+\{a,b,\chi(x,y)\}_{\mathfrak
h}-\mu(x)T(y)(a,b)+\mu(y)T(x)(a,b)=0,
\end{equation}
\begin{equation}\label{L28}
\{a,b,\mu(x)c\}_{\mathfrak
h}=\mu(x)\{a,b,c\}_{\mathfrak
h}-[c,T(x)(a,b)]_{\mathfrak
h},\end{equation}
\begin{equation}\label{L29}
\rho(x)(a,[b,c]_{\mathfrak h})=[\rho(x)(a,b),c]_{\mathfrak h}+[b,\rho(x)(a,c)]_{\mathfrak h}.
\end{equation}
\item Those resembling Eq.~(\ref{eq2.6}):\\
\begin{align}\label{L1}&D(x_1,x_2)\omega(y_1,y_2,y_3)+\omega(x_1,x_2,\{y_1,y_2,y_3\}_{\mathfrak
g})=\omega(\{x_1,x_2,y_1\}_{\mathfrak
g},y_2,y_3)\nonumber\\&+\theta(y_2,y_3)\omega(x_1,x_2,y_1)+\omega(y_1,\{x_1,x_2,y_2\}_{\mathfrak
g},y_3)-\theta(y_1,y_3)\omega(x_1,x_2,y_2)\nonumber\\&+\omega(y_1,y_2,\{x_1,x_2,y_3\}_{\mathfrak
g})+D(y_1,y_2)\omega(x_1,x_2,y_3),
 \end{align}
\begin{align}\label{L2}&D(x,y)\theta(z,w)a-\theta(z,w)D(x,y)a\nonumber\\=&\theta(\{x,y,z\}_{\mathfrak
g},w)a+\theta(z,\{x,y,w\}_{\mathfrak g})a-T(w)(\omega(x,y,z),a)-\rho(z)(a,\omega(x,y,w)),
 \end{align}
 \begin{equation}\label{L3}\theta(x,\{y,z,w\}_{\mathfrak
g})a-\rho(x)(a,\omega(y,z,w))=\theta(z,w)\theta(x,y)a-\theta(y,w)\theta(x,z)a+D(y,z)\theta(x,w)a,
 \end{equation}
 \begin{align}\label{L30}&D(x,y)D(z,w)a-D(z,w)D(x,y)a\nonumber\\=&
D(\{x,y,z\}_{\mathfrak g},w)a+D(z,\{x,y,w \}_{\mathfrak g})a-\rho(w)(\omega(x,y,z),a)+\rho(z)(\omega(x,y,w),a),
\end{align}
\begin{align}\label{L4}(D(x,y)\rho(z)-\rho(\{x,y,z\}_{\mathfrak
g}))(a,b)&=\rho(z)((D(x,y)a,b)+(a,D(x,y)b))+\{\omega(x,y,z),a,b\}_{\mathfrak
h},
 \end{align}
\begin{equation}\label{L5}D(y,z)(\rho(x)(a,b))=\rho(x)(a,D(y,z)b)-\rho(z
)(\theta(x,y)a,b)+\rho(y)(\theta(x,z)a,b),
 \end{equation}
\begin{equation}\label{L6}\theta(y,z)(\rho(x)(a,b))=\rho(x)(a,\theta(y,z)b)-T(z
)(\theta(x,y)a,b)-\rho(y)(b,\theta(x,z)a),
 \end{equation}
 \begin{align}\label{L31}
(D(x,y)T(z)-T(\{x,y,z\}_{\mathfrak
g}))(a,b)=T(z)((D(x,y)a,b)
+(a,D(x,y)b))+\{a,b,\omega(x,y,z)\}_{\mathfrak h},
 \end{align}
\begin{align}\label{L32}
(T(\{x,y,z\}_{\mathfrak
g})- D(x,y)T(z))(a,b)+\{a,b,\omega(x,y,z)\}_{\mathfrak h}=(\theta(y,z)T(x)-\theta(x,z)T(y))(a,b),
 \end{align}
\begin{equation}\label{L7}D(x,y)(\{a,b,c\}_{\mathfrak
h})=\{D(x,y)a,b,c\}_{\mathfrak
h}+\{a,D(x,y)b,c\}_{\mathfrak
h}+\{a,b,D(x,y)c\}_{\mathfrak h},
 \end{equation}
\begin{equation}\label{L8}\rho(x)(a,\rho(y)(b,c))=\rho(y)(\rho(x)(a,b),c)+\rho(y)(b,\rho(x)(a,c))
-\{\theta(x,y)a,b,c\}_{\mathfrak h},
 \end{equation}
\begin{equation}\label{L9}\{a,b,\theta(x,y)c\}_{\mathfrak h}=\theta(x,y)\{a,b,c\}_{\mathfrak
h}-T(y)(T(x)(a,b),c)-\rho(x)(c,T(y)(a,b)),
 \end{equation}
 \begin{equation}\label{L33}
\rho(x)(a,T(y)(b,c))=T(y)(\rho(x)(a,b),c)+T(y)(b,\rho(x)(a,c))
-\{a,c,\theta(x,y)a\}_{\mathfrak
h},\end{equation}
\begin{equation}\label{L34}
\{a,b,D(x,y)c\}_{\mathfrak h}=D(x,y)\{a,b,c\}_{\mathfrak h}-\rho(y)(T(x)(a,b),c),
+\rho(x)(T(y)(a,b),c), \end{equation}
\begin{equation}\label{L10}\{a,b,\rho(x)(c,d)\}_{\mathfrak h}=\rho(x)(\{a,b,c\}_{\mathfrak
h},d)-\{c,T(x)(a,b),d\}_{\mathfrak
h}+\rho(x)(c,\{a,b,d\}_{\mathfrak h}),
 \end{equation}
\begin{equation}\label{L11}\rho(x)(a,\{b,c,d\}_{\mathfrak
h})=\{\rho(x)(a,b),c,d\}_{\mathfrak h}+\{b,\rho(x)(a,c),d\}_{\mathfrak
h}+\{b,c,\rho(x)(a,d)\}_{\mathfrak h}.
 \end{equation}
\begin{equation}\label{L35}
\{a,b,T(x)(c,d)\}_{\mathfrak h}
=T(x)(\{a,b,c\}_{\mathfrak h},d)
+T(x)(c,\{a,b,d\}_{\mathfrak h})
+\{c,d,T(x)(a,b)\}_{\mathfrak h}. \end{equation}
\end{itemize}
\end{defi}

\begin{defi} Let $(\chi_1,\omega_1,\mu_1,\theta_1,D_{1},\rho_1,T_1)$ and
$(\chi_2,\omega_2,\mu_2,\theta_2,D_{2},\rho_2,T_2)$ be two
non-abelian (2,3)-cocycles on $\mathfrak g$ with values in
 $\mathfrak h$. They are said to be equivalent if there
exists a linear map $\varphi:\mathfrak g\longrightarrow\mathfrak h$
such that for all $x, y, z\in \mathfrak g$ and $a,b\in \mathfrak h$,
the following equalities hold:
 \begin{equation}\label{E1}\chi_1(x,y)-\chi_2(x,y)=[\varphi(x),\varphi(y)]_{\mathfrak h}+\varphi[x,y]_{\mathfrak g}
 -\mu_{2}(x)\varphi(y)+\mu_{2}(y)\varphi(x),
 \end{equation}
\begin{align}\label{E2}&\omega_1(x,y,z)-\omega_2(x,y,z)=\theta_{2}(x,z)\varphi(y)-D_{2}(x,y)\varphi(z)
+\rho_2(x)(\varphi(y),\varphi(z))-\theta_{2}(y,z)\varphi(x)
\nonumber\\&+T_{2}(z)(\varphi(x),\varphi(y))-\rho_2(y)(\varphi(x),\varphi(z))-\{\varphi(x),\varphi(y),\varphi(z)\}_{\mathfrak
h}+\varphi\{x,y,z\}_{\mathfrak
g},
 \end{align}
\begin{equation}\label{E3}\mu_1(x)a-\mu_2(x)a=[a,\varphi(x)]_{\mathfrak
h},\end{equation}
   \begin{equation}\label{E4}\theta_1(x,y)a-\theta_2(x,y)a=\rho_{2}(x)(a,\varphi(y))-T_{2}(y)(a,\varphi(x))+\{a,\varphi(x),\varphi(y)\}_{\mathfrak
h},
 \end{equation}
 \begin{equation}\label{E5}D_{1}(x,y)a-D_{2}(x,y)a=\rho_2(y)(\varphi(x),a)-\rho_{2}(x)(\varphi(y),a)+\{\varphi(x),\varphi(y),a\}_{\mathfrak
h},
 \end{equation}
\begin{equation}\label{E6}\rho_1(x)(a,b)-\rho_2(x)(a,b)=\{a,\varphi(x),b\}_{\mathfrak
h}, ~~T_{1}(x)(a,b)-T_{2}(x)(a,b)=\{b,a,\varphi(x)\}_{\mathfrak
h}.\end{equation}
 \end{defi}
For convenience, we abbreviate a non-abelian (2,3)-cocycle $(\chi,\omega,\mu,\theta,D,\rho,T)$ as $(\chi,\omega)$, denote
the equivalent class of a non-abelian (2,3)-cocycle $(\chi,\omega,\mu,\theta,D,\rho,T)$ simply by $[(\chi,\omega)]$.
Furthermore, we denote the set of equivalent classes of non-abelian (2,3)-cocycles by
$H_{nab}^{(2,3)}(\mathfrak g,\mathfrak h)$.

Using the above notations, we define multilinear maps $[  \ ,  \ ]_{\chi}$ and $[  \ , \ , \ ]_{\omega}$ on $\mathfrak g\oplus \mathfrak h$ by
\begin{align}\label{NLts0}[x+a,y+b]_{\chi}&=[x,y]_{\mathfrak g}+\chi(x,y)+\mu(x)b-\mu(y)a+[a,b]_{\mathfrak
h}
,\end{align}
\begin{align}\label{NLts}\{x+a,y+b,z+c\}_{\omega}=&\{x,y,z\}_{\mathfrak g}+\omega(x,y,z)+D(x,y)c+\theta(y,z)a-\theta(x,z)b
\nonumber\\&+T(z)(a,b)+\rho(x)(b,c)-\rho(y)(a,c)+\{a,b,c\}_{\mathfrak
h}\end{align}
for all $x,y,z\in \mathfrak g$ and $a,b,c\in \mathfrak h$.

\begin{pro} \label{LY} With the above notions,
$(\mathfrak g\oplus \mathfrak h,[  \  ,  \ ]_{\chi},\{  \  , \ , \
\}_{\omega})$ is a Lie-Yamaguti algebra if and only if the septuple $(\chi,\omega,\mu,\theta,D,\rho,T)$
is a non-abelian (2,3)-cocycle. Denote this Lie-Yamaguti algebra
 $(\mathfrak g\oplus \mathfrak h,[  \  ,  \ ]_{\chi},\{  \  , \ , \
\}_{\omega})$ simply by $\mathfrak g\oplus_{(\chi,\omega)}\mathfrak h$.
\end{pro}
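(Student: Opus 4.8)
\textit{Proof strategy.} The plan is to establish both implications at once by a direct expansion. Given the septuple, equip $\mathfrak g\oplus\mathfrak h$ with the operations $[\,\cdot\,,\cdot\,]_{\chi}$ and $\{\,\cdot\,,\cdot\,,\cdot\,\}_{\omega}$ defined in \eqref{NLts0} and \eqref{NLts}, and run through each of the defining identities \eqref{eq2.1}, \eqref{eq2.3}, \eqref{eq2.4}, \eqref{eq2.5}, \eqref{eq2.6} of a Lie--Yamaguti algebra. For each identity I would substitute general elements $x_i+a_i$ with $x_i\in\mathfrak g$, $a_i\in\mathfrak h$, and expand via \eqref{NLts0}--\eqref{NLts}. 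Because the $\mathfrak g$-components of $[\,\cdot\,,\cdot\,]_{\chi}$ and $\{\,\cdot\,,\cdot\,,\cdot\,\}_{\omega}$ are precisely $[\,\cdot\,,\cdot\,]_{\mathfrak g}$ and $\{\,\cdot\,,\cdot\,,\cdot\,\}_{\mathfrak g}$, every expanded identity splits into a $\mathfrak g$-valued equation and an $\mathfrak h$-valued equation; the $\mathfrak g$-valued one is the corresponding identity for $\mathfrak g$ and so holds automatically, which reduces everything to the vanishing of the $\mathfrak h$-valued part.

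Next I would decompose the $\mathfrak h$-valued part of each axiom by multidegree in the formal variables: for each subset of the argument slots, collect the terms that use the $\mathfrak h$-component of exactly those arguments. Since $\chi$, $\omega$ and $\mu,\theta,D,\rho,T$ are all multilinear, each such multihomogeneous piece must vanish on its own. The piece in which no argument contributes its $\mathfrak h$-component is nontrivial only because $\chi$ and $\omega$ are themselves $\mathfrak h$-valued, and it reproduces one of the ``$\chi,\omega$-only'' conditions such as \eqref{L12}, \eqref{L15}, \eqref{L22} or \eqref{L1}; the piece in which every argument contributes its $\mathfrak h$-component is just the corresponding identity for $\mathfrak h$, which holds because $\mathfrak h$ is a Lie--Yamaguti algebra; the intermediate pieces, after using the (anti)symmetry in \eqref{eq2.1} to discard duplicates, give the remaining conditions. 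Carrying this out, \eqref{eq2.1} yields \eqref{L00}--\eqref{L01}, \eqref{eq2.3} yields \eqref{L12}--\eqref{L14}, \eqref{eq2.4} yields \eqref{L15}--\eqref{L21}, \eqref{eq2.5} yields \eqref{L22}--\eqref{L29}, and the quintilinear axiom \eqref{eq2.6} yields the long block \eqref{L1}--\eqref{L35}. As the decomposition is reversible, $\mathfrak g\oplus\mathfrak h$ with these operations satisfies each Lie--Yamaguti axiom exactly when the corresponding listed conditions hold, which is the asserted equivalence.

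For a representative computation I would expand \eqref{eq2.1}: from $[x+a,y+b]_{\chi}+[y+b,x+a]_{\chi}$ one reads off $\chi(x,y)+\chi(y,x)=0$ (the first part of \eqref{L00}) together with the automatically-true $[a,b]_{\mathfrak h}+[b,a]_{\mathfrak h}=0$; from $\{x+a,y+b,z+c\}_{\omega}+\{y+b,x+a,z+c\}_{\omega}$ one reads off $\omega(x,y,z)+\omega(y,x,z)=0$, $D(x,y)c+D(y,x)c=0$, $T(z)(a,b)+T(z)(b,a)=0$ (that is, the second part of \eqref{L00} and \eqref{L01}) together with the automatic $\{a,b,c\}_{\mathfrak h}+\{b,a,c\}_{\mathfrak h}=0$, the $\theta$- and $\rho$-terms cancelling. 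The remaining four axioms are treated in the same mechanical way. A useful consistency check is the degenerate case $\chi=0$, $\omega=0$, $\rho=0$, $T=0$ with $\mathfrak h$ abelian, in which the statement collapses to the semidirect-product description of a representation of $\mathfrak g$ recalled earlier in Section~2.

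The main obstacle is not conceptual but organizational: one must track signs, track which operation is applied to which argument so that an operation applied to an argument already in $\mathfrak h$ is not double-counted, and, most importantly, confirm exhaustiveness --- that the multidegree pieces of \eqref{eq2.1}--\eqref{eq2.6} are in bijection with the listed conditions \eqref{L00}--\eqref{L35} plus the automatic $\mathfrak g$- and $\mathfrak h$-axioms, so that nothing is missing and nothing superfluous has been imposed. For \eqref{eq2.6} this means sorting upwards of twenty multihomogeneous components and matching each to its counterpart among \eqref{L1}--\eqref{L35}, which is the most delicate and longest part of the argument.
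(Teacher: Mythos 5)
Your proposal is correct and follows essentially the same route as the paper: expand each Lie--Yamaguti axiom \eqref{eq2.1}--\eqref{eq2.6} on $\mathfrak g\oplus\mathfrak h$, split by which argument slots contribute $\mathfrak g$- versus $\mathfrak h$-components (the paper does this as a case analysis over pure elements, which by multilinearity is the same as your multidegree decomposition), and match each resulting $\mathfrak h$-valued piece to one of \eqref{L00}--\eqref{L35}, the all-$\mathfrak g$ and all-$\mathfrak h$ pieces being handled exactly as you describe. The organizational burden you identify for \eqref{eq2.6} is indeed where the paper spends most of its effort.
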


\begin{proof}
$(\mathfrak g\oplus \mathfrak h,[  \  ,  \
]_{\chi},\{  \  , \ , \
\}_{\omega})$ is a Lie-Yamaguti algebra if and only if
Eqs.~(\ref{eq2.1})-(\ref{eq2.6}) hold for $[  \  ,  \
]_{\chi},\{ \ , \ , \
\}_{\omega}$. In fact, it is easy to check that
Eq.~(\ref{eq2.1}) holds for $[  \  ,  \
]_{\chi},\{ \ , \ , \ \}_{\omega}$ if
and only if (\ref{L00})-(\ref{L01}) hold. In the following, we always assume that
$x,y,z,w\in \mathfrak g$ and $a,b,c,d\in \mathfrak h$.

 For the Eq.~(\ref{eq2.3}), we discuss it for the following cases:
 for all $x_1,x_2,x_3\in \mathfrak g\oplus \mathfrak h$,
\begin{enumerate}[label=$(\Roman*)$]
\item when all of the three elements $x_1,x_2,x_3$ belong to
$\mathfrak g$, (\ref{eq2.3}) holds if and only if (\ref{L12})
holds.

\item when $x_1,x_2,x_3$ equal to:
\begin{enumerate}[label=$(\roman*)$]
\item $x,y,a $ or $a,x,y $ or $x,a,y $ respectively, (\ref{eq2.3}) holds if and only if (\ref{L13}) holds.
\item $a,b,x$ or $a,x,b $ or $x,a,b $ respectively, (\ref{eq2.3}) holds
for if and only if (\ref{L14}) holds.
\end{enumerate}
\end{enumerate}
For the Eq.~(\ref{eq2.4}), we discuss it for the following cases:
 for all $x_1,x_2,x_3,y_1\in \mathfrak g\oplus \mathfrak h$,
\begin{enumerate}[label=$(\Roman*)$]
\item when all of the four elements $x_1,x_2,x_3,y_1$ belong to
$\mathfrak g$, (\ref{eq2.4}) holds if and only if (\ref{L15})
holds.

\item when $x_1,x_2,x_3,y_1$  equal to:
\begin{enumerate}[label=$(\roman*)$]
\item $x,y,z,a $, (\ref{eq2.4}) holds if and only if (\ref{L16}) holds.

\item $x,y,a,z$ or $x,a,y,z $ or $a,x,y,z $ respectively, (\ref{eq2.4}) holds
 if and only if (\ref{L17}) holds.

\item $x,y,a,b$ or $x,a,y,b $ or $a,x,y,b $ respectively, (\ref{eq2.4}) holds
 if and only if (\ref{L18}) holds.

\item $x,a,b,y$ or $a,x,b,y $ or $a,b,x,y $ respectively, (\ref{eq2.4}) holds
 if and only if (\ref{L19}) holds.

\item $x,a,b,c $ or $a,x,b,c $ or $a,b,x,y,c $
respectively, (\ref{eq2.4}) holds if and only if (\ref{L20}) holds.

\item $a,b,c,x $, (\ref{eq2.4}) holds if and only if (\ref{L21}) holds.
\end{enumerate}
\end{enumerate}
For the Eq.~(\ref{eq2.5}), we discuss it for the following cases:
 for all $x_1,x_2,y_1,y_2\in \mathfrak g\oplus \mathfrak h$,
\begin{enumerate}[label=$(\Roman*)$]
\item when all of the four elements $x_1,x_2,y_1,y_2$ belong to
$\mathfrak g$, (\ref{eq2.5}) holds if and only if (\ref{L22})
holds.

\item when $x_1,x_2,y_1,y_2$ equal to:
\begin{enumerate}[label=$(\roman*)$]
\item $x,y,z,a $ or $x,y,a,z $, (\ref{eq2.5}) holds if and only if (\ref{L23}) holds.

\item $x,a,y,z$ or $a,x,y,z $ respectively, (\ref{eq2.5}) holds if and only if (\ref{L24}) holds.

\item $x,y,a,b$, (\ref{eq2.5}) holds if and only if (\ref{L25}) holds.

\item $x,a,y,b$ or $a,x,y,b $ or $x,a,b,y $ or $a,x,b,y $ respectively, (\ref{eq2.5}) holds
 if and only if (\ref{L26}) holds.

\item $a,b,x,y $, (\ref{eq2.5}) holds if and only if (\ref{L27}) holds.

\item $a,b,c,x $ or $a,b,x,c $ respectively, (\ref{eq2.5}) holds if and only if (\ref{L28}) holds.

\item $a,x,b,c $ or $x,a,b,c $ respectively, (\ref{eq2.5}) holds if and only if (\ref{L29}) holds.
\end{enumerate}\end{enumerate}

For the Eq.~(\ref{eq2.6}), we discuss it for the following cases:
 for all $x_1,x_2,y_1,y_2,y_3\in \mathfrak g\oplus \mathfrak h$,
\begin{enumerate}[label=$(\Roman*)$]
\item when all of the five elements $x_1,x_2,y_1,y_2,y_3$ belong to
$\mathfrak g$, (\ref{eq2.6}) holds if and only if (\ref{L1})
holds.

\item when $x_1,x_2,y_1,y_2,y_3$ equal to:
\begin{enumerate}[label=$(\roman*)$]
\item $x,y,z,w,a $, (\ref{eq2.6}) holds if and only if (\ref{L30}) holds.

\item $x,y,z,a,w$ or $x,y,a,z,w $ respectively, (\ref{eq2.6}) holds if and only if (\ref{L2}) holds.

\item $x,a,y,z,w$ or $a,x,y,z,w $ respectively, (\ref{eq2.6}) holds if and only if (\ref{L3}) holds.

\item $x,y,z,a,b$ or $x,y,a,y,b $, (\ref{eq2.6}) holds
 if and only if (\ref{L4}) holds.

\item $x,a,y,z,b $ or $a,x,y,z,b $ respectively, (\ref{eq2.6}) holds if and only if (\ref{L5}) holds.

\item $x,y,a,b,z $, (\ref{eq2.6}) holds if and only if (\ref{L31}) holds.

\item $x,a,y,b,z $ or $a,x,y,b,z $ or $x,a,b,y,z $ or $a,x,b,y,z $ respectively, (\ref{eq2.5}) holds if and only if (\ref{L6}) holds.

\item $a,b,x,y,z $, (\ref{eq2.6}) holds if and only if (\ref{L32}) holds.

\item $x,y,a,b,c $, (\ref{eq2.6}) holds if and only if (\ref{L7}) holds.

\item $x,a,y,b,c $ or $a,x,y,b,c $ or $a,x,b,y,c $ or $x,a,b,y,c $ respectively, (\ref{eq2.6}) holds if and only if (\ref{L8}) holds.

\item $x,a,b,c,y $ or $a,x,b,c,y $ respectively, (\ref{eq2.6}) holds if and only if (\ref{L33}) holds.

\item $a,b,x,c,y $ or $a,b,c,x,y $ respectively, (\ref{eq2.6}) holds if and only if (\ref{L9}) holds.

\item $a,b,x,y,c $, (\ref{eq2.6}) holds if and only if (\ref{L34}) holds.

\item $a,b,c,d,x $, (\ref{eq2.6}) holds if and only if (\ref{L35}) holds.

\item $a,b,c,x,d $ or $a,b,x,c,d $ respectively, (\ref{eq2.6}) holds if and only if (\ref{L10}) holds.

\item $a,x,b,c,d $ or $x,a,b,c,d $ respectively, (\ref{eq2.6}) holds if and only if (\ref{L11}) holds.
\end{enumerate}\end{enumerate}
This completes the proof.
\end{proof}
Let
 $\mathcal{E}:0\longrightarrow\mathfrak h\stackrel{i}{\longrightarrow} \hat{\mathfrak g}\stackrel{p}{\longrightarrow}\mathfrak g\longrightarrow0$
be a non-abelian extension of $\mathfrak g$ by
$\mathfrak h$ with a section $s$ of $p$.
Define $\chi_{s}:\mathfrak g\otimes \mathfrak g\longrightarrow \mathfrak h,~\omega_{s}:\mathfrak g\otimes \mathfrak g\otimes\mathfrak
g\longrightarrow \mathfrak h,~\mu_{s}:\mathfrak g
\longrightarrow \mathfrak{gl}(\mathfrak h),~\theta_{s},D_{s}:\mathfrak g\wedge \mathfrak
g\longrightarrow \mathfrak{gl}(\mathfrak h),~\rho_{s},T_{s}:\mathfrak g\longrightarrow
\mathrm{Hom} (\mathfrak h\wedge \mathfrak h,\mathfrak h)$
 respectively by
 \begin{equation}\label{C0}\chi_{s}(x,y)=[s(x),s(y)]_{\hat{\mathfrak g}}-s[x,y]_{\mathfrak g},\end{equation}
\begin{equation}\label{C1}\omega_{s}(x,y,z)=\{s(x),s(y),s(z)\}_{\hat{\mathfrak g}}-s\{x,y,z\}_{\mathfrak g},\end{equation}
\begin{equation}\label{C2}\theta_{s}(x,y)a=\{a,s(x),s(y)\}_{\hat{\mathfrak g}},~~~~\rho_{s}(x)(a,b)=\{s(x),a,b\}_{\hat{\mathfrak g}},\end{equation}
\begin{equation}\label{C3}D_{s}(x,y)a=\{s(x),s(y),a\}_{\hat{\mathfrak g}},~~~~
T_{s}(x)(a,b)=\{a,b,s(x)\}_{\hat{\mathfrak
g}}\end{equation}
for any $x,y,z\in \mathfrak g,a,b\in \mathfrak h$.

By direct computations, we have
\begin{pro} \label{CY} With the above notions, $(\chi_{s},\omega_{s},\mu_{s},\theta_{s},D_{s},\rho_{s},T_{s})$ is a
non-abelian (2,3)-cocycle on $\mathfrak g$ with values in
 $\mathfrak h$. We call it the non-abelian (2,3)-cocycle corresponding to the extension $\mathcal{E}$ induced by $s$.
 Naturally, $(\mathfrak g\oplus\mathfrak h, [ \ , \ ]_{\chi_{s}},\{ \ , \ , \ \}_{\omega_{s}})$ is
 a Lie-Yamaguti algebra. Denote this Lie-Yamaguti algebra simply by $\mathfrak g\oplus_{(\chi_{s},\omega_{s})}\mathfrak h$.
\end{pro}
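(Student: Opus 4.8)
The plan is to reduce Proposition~\ref{CY} entirely to Proposition~\ref{LY}, rather than verifying all thirty-odd identities \eqref{L00}--\eqref{L35} by hand. The key observation is that the section $s$ induces a vector-space isomorphism
\[
\Phi\colon \mathfrak g\oplus\mathfrak h\longrightarrow \hat{\mathfrak g},\qquad \Phi(x+a)=s(x)+i(a),
\]
since $p s=I_{\mathfrak g}$, $\ker p=\operatorname{im} i$, and $i$ is injective; its inverse sends $\hat x\in\hat{\mathfrak g}$ to $p(\hat x)+i^{-1}\bigl(\hat x-s p(\hat x)\bigr)$. First I would record that $\hat{\mathfrak g}$ is by hypothesis a Lie-Yamaguti algebra, so transporting its brackets along $\Phi$ produces a Lie-Yamaguti algebra structure on the vector space $\mathfrak g\oplus\mathfrak h$.

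Next I would compute this transported structure explicitly and check that it coincides with $[\,\cdot\,,\,\cdot\,]_{\chi_s}$ and $\{\,\cdot\,,\,\cdot\,,\,\cdot\,\}_{\omega_s}$ from \eqref{NLts0}--\eqref{NLts}. For the binary bracket: $\Phi^{-1}\bigl([s(x)+i(a),\,s(y)+i(b)]_{\hat{\mathfrak g}}\bigr)$; expanding by bilinearity gives $[s(x),s(y)]_{\hat{\mathfrak g}}+[s(x),i(b)]_{\hat{\mathfrak g}}+[i(a),s(y)]_{\hat{\mathfrak g}}+[i(a),i(b)]_{\hat{\mathfrak g}}$. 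Here $[s(x),s(y)]_{\hat{\mathfrak g}}=s[x,y]_{\mathfrak g}+i\chi_s(x,y)$ by the \emph{definition} \eqref{C0}, the middle terms lie in $\mathfrak h$ (since $\mathfrak h$ is an ideal) and reproduce $\mu_s(x)b-\mu_s(y)a$ once one sets $\mu_s(x)a:=i^{-1}[s(x),i(a)]_{\hat{\mathfrak g}}$ — which is how the semidirect-product convention and the sign conventions in \eqref{NLts0} were chosen — and $[i(a),i(b)]_{\hat{\mathfrak g}}=i[a,b]_{\mathfrak h}$ because $i$ is an algebra homomorphism. An identical bookkeeping for the ternary bracket, using \eqref{C1}--\eqref{C3} as the definitions of $\omega_s,\theta_s,D_s,\rho_s,T_s$ and again the ideal property to see that every term with at least one $\mathfrak h$-entry lands in $\mathfrak h$, matches \eqref{NLts} term by term. (One must be slightly careful about the placement of the $\mathfrak h$-argument: e.g. $\{s(x),s(y),i(c)\}$ gives $D_s$, $\{i(a),s(y),s(z)\}$ gives $\theta_s$, $\{s(x),i(a),i(b)\}$ gives $\rho_s$, $\{i(a),i(b),s(x)\}$ gives $T_s$, and the two-$\mathfrak h$-argument terms in which the two $\mathfrak h$-entries are separated by an $s$-entry must also be checked to vanish or be absorbed correctly — this follows from antisymmetry \eqref{eq2.1} together with the ideal relations.)

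Having shown $\Phi$ is an isomorphism of Lie-Yamaguti algebras from $\bigl(\mathfrak g\oplus\mathfrak h,[\,\cdot\,,\,\cdot\,]_{\chi_s},\{\,\cdot\,,\,\cdot\,,\,\cdot\,\}_{\omega_s}\bigr)$ onto $\hat{\mathfrak g}$, the space $\mathfrak g\oplus\mathfrak h$ with these operations is a Lie-Yamaguti algebra. Then Proposition~\ref{LY}, applied in the reverse direction, forces the septuple $(\chi_s,\omega_s,\mu_s,\theta_s,D_s,\rho_s,T_s)$ to be a non-abelian $(2,3)$-cocycle, and simultaneously gives the final sentence $\mathfrak g\oplus_{(\chi_s,\omega_s)}\mathfrak h=\hat{\mathfrak g}$ as Lie-Yamaguti algebras. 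The main obstacle is purely organizational: making sure the definitions $\mu_s(x)a=i^{-1}[s(x),i(a)]_{\hat{\mathfrak g}}$ etc.\ (implicitly fixed by \eqref{C2}--\eqref{C3} and the stated formulas) are the ones that make the transported structure literally equal \eqref{NLts0}--\eqref{NLts}, including all signs; once the dictionary is pinned down, no identity needs to be verified directly because Proposition~\ref{LY} does all the work. I would also remark that this is exactly why the long list \eqref{L00}--\eqref{L35} was extracted in Proposition~\ref{LY}: it is the ``master computation'' that Proposition~\ref{CY} now invokes.
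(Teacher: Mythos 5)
Your proposal is correct, but it takes a genuinely different route from the paper. The paper offers no argument beyond the phrase ``by direct computations,'' meaning the intended proof is a term-by-term verification that the septuple $(\chi_s,\omega_s,\mu_s,\theta_s,D_s,\rho_s,T_s)$ satisfies each of the identities \eqref{L00}--\eqref{L35}, using the Lie-Yamaguti axioms of $\hat{\mathfrak g}$ at every step. You instead transport the structure of $\hat{\mathfrak g}$ along the linear isomorphism $\Phi(x+a)=s(x)+i(a)$, match the transported brackets against \eqref{NLts0}--\eqref{NLts} (an expansion into $4$ and $8$ terms respectively, each identified with one of \eqref{C0}--\eqref{C3} via the ideal property of $\ker p$ and the antisymmetry \eqref{eq2.1}), and then invoke the forward direction of Proposition~\ref{LY}. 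This is sound and buys a real economy: the thirty-odd cocycle identities are obtained at once from the single ``master computation'' already done in Proposition~\ref{LY}, rather than re-derived. Two remarks in your favour: the paper never actually writes down the defining formula for $\mu_s$ (it is absent from \eqref{C0}--\eqref{C3}), and your $\mu_s(x)a=i^{-1}[s(x),i(a)]_{\hat{\mathfrak g}}$ is the definition needed for the bookkeeping to close, so your proof fills a small gap; moreover the isomorphism you construct is exactly the map $f(a+s(x))=a+x$ that the authors themselves introduce after Lemma~\ref{Le1} to show $\mathcal{E}\sim\mathcal{E}_{(\chi_s,\omega_s)}$, so your argument front-loads a computation the paper needs anyway. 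The only point to keep honest is that Proposition~\ref{LY} must be applied to the \emph{specific} operations \eqref{NLts0}--\eqref{NLts} built from the septuple, so the term-by-term identification (including the signs coming from $\{s(x),b,s(z)\}=-\theta_s(x,z)b$ and $\{a,s(y),c\}=-\rho_s(y)(a,c)$) is not optional; you flag this correctly.
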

In the following, we denote $(\chi_{s},\omega_{s},\mu_{s},\theta_{s},D_{s},\rho_{s},T_{s})$
by $(\chi,\omega,\mu,\theta,D,\rho,T)$ without ambiguity.

 \begin{lem} \label{Le1} Let $(\chi_i,\omega_i,\mu_i,\theta_i,D_i,\rho_i,T_i)$ be the non-abelian (2,3)-cocycle
 corresponding to the extension $\mathcal{E}$ induced by $s_i$~(i=1,2).
 Then $(\chi_1,\omega_1,\mu_1,\theta_1,D_1,\rho_1,T_1)$ and $(\chi_2,\omega_2,\mu_2,\theta_2,D_2,\rho_2,T_2)$
 are equivalent, that is, the equivalent classes of non-abelian (2,3)-cocycles corresponding to
 a non-abelian extension
induced by a section are independent on the choice of sections.
 \end{lem}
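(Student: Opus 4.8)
The plan is to build the required linear map directly out of the two sections. Since $p\circ s_1=p\circ s_2=I_{\mathfrak g}$, for every $x\in\mathfrak g$ we have $p(s_2(x)-s_1(x))=0$, so $s_2(x)-s_1(x)\in\ker p=i(\mathfrak h)$. As $i$ is injective, there is a unique linear map $\varphi:\mathfrak g\to\mathfrak h$ with $i\varphi(x)=s_2(x)-s_1(x)$ for all $x\in\mathfrak g$. Identifying $\mathfrak h$ with the ideal $i(\mathfrak h)\subseteq\hat{\mathfrak g}$, this reads $s_1=s_2-\varphi$, and the claim is that this $\varphi$ realizes the equivalence of the two non-abelian $(2,3)$-cocycles, i.e. that Eqs.~(\ref{E1})-(\ref{E6}) hold for it; well-definedness and linearity of $\varphi$ are immediate.

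The verification is a term-by-term expansion. For each of the seven components one substitutes $s_1=s_2-\varphi$ into the defining formulas (\ref{C0})-(\ref{C3}) (together with $\mu_s(x)a=[s(x),a]_{\hat{\mathfrak g}}$) and expands using the multilinearity of $[\ ,\ ]_{\hat{\mathfrak g}}$ and $\{\ ,\ ,\ \}_{\hat{\mathfrak g}}$. Because $\mathfrak h$ is an ideal of $\hat{\mathfrak g}$, every product having at least one slot in $\mathfrak h$ again lies in $\mathfrak h$; a product in which exactly one slot is an $\mathfrak h$-element is recovered, via (\ref{C1})-(\ref{C3}) and the antisymmetry relations (\ref{eq2.1}), as $\mu_2,\theta_2,D_2,\rho_2$ or $T_2$ applied to that $\mathfrak h$-entry, while products with two or three $\mathfrak h$-entries are $[\ ,\ ]_{\mathfrak h}$ or $\{\ ,\ ,\ \}_{\mathfrak h}$. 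For instance $[s_1(x),s_1(y)]_{\hat{\mathfrak g}}=[s_2(x),s_2(y)]_{\hat{\mathfrak g}}-\mu_2(x)\varphi(y)+\mu_2(y)\varphi(x)+[\varphi(x),\varphi(y)]_{\mathfrak h}$, and subtracting $s_1[x,y]_{\mathfrak g}=s_2[x,y]_{\mathfrak g}-\varphi[x,y]_{\mathfrak g}$ gives (\ref{E1}) immediately; the same scheme applied to $[s_1(x),a]_{\hat{\mathfrak g}}$, $\{a,s_1(x),s_1(y)\}_{\hat{\mathfrak g}}$, $\{s_1(x),s_1(y),a\}_{\hat{\mathfrak g}}$, $\{s_1(x),a,b\}_{\hat{\mathfrak g}}$ and $\{a,b,s_1(x)\}_{\hat{\mathfrak g}}$ produces Eqs.~(\ref{E3}), (\ref{E4}), (\ref{E5}) and the two identities in (\ref{E6}), respectively.

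The only laborious step is (\ref{E2}). Expanding $\omega_1(x,y,z)=\{s_1(x),s_1(y),s_1(z)\}_{\hat{\mathfrak g}}-s_1\{x,y,z\}_{\mathfrak g}$ with $s_1=s_2-\varphi$ produces eight terms in the triple product: the all-$s_2$ term gives $\omega_2(x,y,z)+s_2\{x,y,z\}_{\mathfrak g}$; the three terms with a single $\varphi$ give the $\theta_2(x,z)\varphi(y)$, $-D_2(x,y)\varphi(z)$ and $-\theta_2(y,z)\varphi(x)$ pieces (using (\ref{C2}), (\ref{C3}) and antisymmetry to move $\varphi$ into the first slot); the three terms with two $\varphi$'s give $\rho_2(x)(\varphi(y),\varphi(z))$, $-\rho_2(y)(\varphi(x),\varphi(z))$ and $T_2(z)(\varphi(x),\varphi(y))$; and the all-$\varphi$ term gives $-\{\varphi(x),\varphi(y),\varphi(z)\}_{\mathfrak h}$. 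After subtracting $s_1\{x,y,z\}_{\mathfrak g}=s_2\{x,y,z\}_{\mathfrak g}-\varphi\{x,y,z\}_{\mathfrak g}$, the term $s_2\{x,y,z\}_{\mathfrak g}$ cancels and precisely the right-hand side of (\ref{E2}) remains. Hence the real obstacle is only the careful sign bookkeeping in this eight-term expansion; once that is done, $\varphi$ exhibits the two cocycles as equivalent, and since the equivalence of non-abelian $(2,3)$-cocycles is an equivalence relation, the equivalence class attached to $\mathcal E$ is independent of the chosen section, which is the assertion of the lemma.
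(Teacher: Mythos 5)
Your proposal is correct and follows essentially the same route as the paper: both define $\varphi(x)=s_2(x)-s_1(x)$ (well-defined since $p\varphi=0$) and verify Eqs.~(\ref{E1})--(\ref{E6}) by substituting $s_1=s_2-\varphi$ into (\ref{C0})--(\ref{C3}) and expanding multilinearly, with the eight-term expansion for (\ref{E2}) being the main computation in each case.
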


\begin{proof}
Let $\hat{\mathfrak g}$ be a non-abelian extension
of $\mathfrak g$ by $\mathfrak h$. Assume that $s_1$ and $s_2$ are two
different sections of $p$, $(\chi_1,\omega_1,\mu_1,\theta_1,D_{1},\rho_1,T_1)$ and
$(\chi_2,\omega_2,\mu_2,\theta_2,D_{2},\rho_2,T_{2})$ are the corresponding non-abelian
(2,3)-cocycles. Define a linear map $\varphi:\mathfrak
g\longrightarrow\mathfrak h$ by $\varphi(x)=s_2(x)-s_1(x)$. Since
$p\varphi(x)=ps_2(x)-ps_1(x)=0$, $\varphi$ is well defined. Thanks to
Eqs.~(\ref{C1})-(\ref{C3}), we get
\begin{eqnarray*}&&w_1(x,y,z)=\{s_1(x),s_1(y),s_1(z)\}_{\hat{\mathfrak g}}-s_{1}\{x,y,z\}_{\mathfrak g}\\&=&
\{s_2(x)-\varphi(x),s_2(y)-\varphi(y),s_2(z)-\varphi(z)\}_{\hat{\mathfrak
g}}-(s_{2}\{x,y,z\}_{\mathfrak g}-\varphi(\{x,y,z\}_{\mathfrak g})
\\&=&\{s_2(x),s_2(y),s_2(z)\}_{\hat{\mathfrak g}}-\{s_2(x),\varphi(y),s_2(z)\}_{\hat{\mathfrak
g}}-\{s_2(x),s_2(y),\varphi(z)\}_{\hat{\mathfrak
g}}+\{s_2(x),\varphi(y),\varphi(z)\}_{\hat{\mathfrak
g}}\\&&-\{\varphi(x),s_2(y),s_2(z)\}_{\hat{\mathfrak
g}}+\{\varphi(x),\varphi(y),s_2(z)\}_{\hat{\mathfrak
g}}+\{\varphi(x),s_2(y),\varphi(z)\}_{\hat{\mathfrak
g}}-\{\varphi(x),\varphi(y),\varphi(z)\}_{\hat{\mathfrak
g}}\\&&-s_2\{x,y,z\}_{\mathfrak g}+\varphi\{x,y,z\}_{\mathfrak g}
\\&=&w_2(x,y,z)+\theta_2(x,z)\varphi(y)-D_{2}(x,y)\varphi(z)
+\rho_2(x)(\varphi(y),\varphi(z))-\theta_2(y,z)\varphi(x)+T_{2}(z)(\varphi(x),\varphi(y))
\\&&-\rho_2(y)(\varphi(x),\varphi(z)) -\{\varphi(x),\varphi(y),\varphi(z)\}_{\hat{\mathfrak
g}}+\varphi\{x,y,z\}_{\mathfrak
g},\end{eqnarray*} which yields that
 Eq.~(\ref{E2}) holds. Similarly, Eqs.~(\ref{E1}) and (\ref{E3})-(\ref{E6})
 hold. This finishes the proof.
\end{proof}

According to Proposition \ref{LY} and Proposition \ref{CY}, given a non-abelian extension
 $\mathcal{E}:0\longrightarrow\mathfrak h\stackrel{i}{\longrightarrow} \hat{\mathfrak g}\stackrel{p}{\longrightarrow}\mathfrak g\longrightarrow0$
of $\mathfrak g$ by
$\mathfrak h$ with a section $s$ of $p$, we have a non-abelian (2,3)-cocycle
 $(\chi_{s},\omega_{s},\mu_{s},\theta_{s},D_{s},\rho_{s},T_{s})$ and a Lie-Yamaguti algebra $\mathfrak g\oplus_{(\chi_s,\omega_s)} \mathfrak h$.
 It follows that
$\mathcal{E}_{(\chi_s,\omega_s)}:0\longrightarrow\mathfrak h\stackrel{i}{\longrightarrow} \mathfrak g\oplus_{(\chi_s,\omega_s)} \mathfrak h\stackrel{\pi}{\longrightarrow}\mathfrak g\longrightarrow0$ is a non-abelian extension of $\mathfrak g$ by $\mathfrak h$. Since any element
$\hat{w}\in \hat{\mathfrak g}$ can be written as $\hat{w}=a+s(x)$ with $a\in \mathfrak h,x\in \mathfrak g$,
define a linear map
\begin{equation*} f:\hat{\mathfrak g}\longrightarrow \mathfrak g\oplus_{(\chi_s,\omega_s)} \mathfrak h,~f(\hat{w})=f(a+s(x))=a+x.\end{equation*}
It is easy to check that $f$ is an isomorphism of Lie-Yamaguti algebras such that
the following commutative diagram holds:
 \begin{equation*} \xymatrix{
  \mathcal{E}:0 \ar[r] & \mathfrak h\ar@{=}[d] \ar[r]^-{i} & \hat{\mathfrak g}\ar[d]_-{f} \ar[r]^-{p} & \mathfrak g \ar@{=}[d] \ar[r] & 0\\
 \mathcal{E}_{(\chi_s,\omega_s)}:0 \ar[r] & \mathfrak h \ar[r]^-{i} & \mathfrak g\oplus_{(\chi_s,\omega_s)} \mathfrak h \ar[r]^-{\pi} & \mathfrak g  \ar[r] & 0,}\end{equation*}
 which indicates that the non-abelian extensions $\mathcal{E}$ and $\mathcal{E}_{(\chi_s,\omega_s)}$ of $\mathfrak g$ by
$\mathfrak h$ are equivalent. On the other hand, if $(\chi,\omega,\mu,\theta,D,\rho,T)$
 is a non-abelian (2,3)-cocycle on $\mathfrak g$ with values in
 $\mathfrak h$, there is a Lie-Yamaguti algebra $\mathfrak g\oplus_{(\chi,\omega)} \mathfrak h$, which yields the following
 non-abelian extension of $\mathfrak g$ by $\mathfrak h$:
 \begin{equation*}\mathcal{E}_{(\chi,\omega)}:0\longrightarrow\mathfrak h\stackrel{i}{\longrightarrow}\mathfrak g\oplus_{(\chi,\omega)} \mathfrak h
\stackrel{\pi}{\longrightarrow}\mathfrak g\longrightarrow0,\end{equation*}
where $i$ is the inclusion and $\pi$ is the projection.

In the following, we focus on the relationship between non-abelian (2,3)-cocycles and extensions.

\begin{pro}
 Let $\mathfrak g$ and $\mathfrak h$ be two Lie-Yamaguti algebras.
 Then the equivalent classes of non-abelian extensions of $\mathfrak g$ by $\mathfrak h$
are classified by the non-abelian cohomology group, that is,
 $\mathcal{E}_{nab}(\mathfrak g,\mathfrak h)\simeq H_{nab}^{(2,3)}(\mathfrak g,\mathfrak h)$.
 \end{pro}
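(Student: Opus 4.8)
The plan is to exhibit a pair of mutually inverse maps between $\mathcal{E}_{nab}(\mathfrak g,\mathfrak h)$ and $H_{nab}^{(2,3)}(\mathfrak g,\mathfrak h)$. In one direction, send a non-abelian extension $\mathcal{E}$ to the class $[(\chi_s,\omega_s)]$ of the induced non-abelian $(2,3)$-cocycle obtained from any section $s$ of $p$ via Eqs.~(\ref{C0})--(\ref{C3}); Proposition~\ref{CY} guarantees this is a genuine non-abelian $(2,3)$-cocycle, and Lemma~\ref{Le1} shows the class is independent of the chosen section. In the other direction, send a class $[(\chi,\omega)]$ to the class of the extension $\mathcal{E}_{(\chi,\omega)}:0\to\mathfrak h\to\mathfrak g\oplus_{(\chi,\omega)}\mathfrak h\to\mathfrak g\to0$, which is a bona fide Lie-Yamaguti algebra extension by Proposition~\ref{LY}.

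Two well-definedness checks are then needed. First, equivalent extensions give equivalent cocycles: if $f:\hat{\mathfrak g}_1\to\hat{\mathfrak g}_2$ realizes an equivalence of extensions as in diagram~(\ref{Ene1}) and $s_1$ is a section of $p_1$, then $s_2:=f\circ s_1$ is a section of $p_2$, and since $f$ is a Lie-Yamaguti algebra homomorphism restricting to the identity on $\mathfrak h$, substituting into Eqs.~(\ref{C0})--(\ref{C3}) shows the cocycle induced by $(s_1,\mathcal{E}_1)$ coincides with the cocycle induced by $(s_2,\mathcal{E}_2)$; by Lemma~\ref{Le1} the two classes agree. Second, equivalent cocycles give equivalent extensions: if $(\chi_1,\omega_1,\mu_1,\theta_1,D_1,\rho_1,T_1)$ and $(\chi_2,\omega_2,\mu_2,\theta_2,D_2,\rho_2,T_2)$ are equivalent through a linear map $\varphi:\mathfrak g\to\mathfrak h$ satisfying Eqs.~(\ref{E1})--(\ref{E6}), then the map $F:\mathfrak g\oplus_{(\chi_1,\omega_1)}\mathfrak h\to\mathfrak g\oplus_{(\chi_2,\omega_2)}\mathfrak h$ defined by $F(x+a)=x+\varphi(x)+a$ is checked, using exactly Eqs.~(\ref{E1})--(\ref{E6}) together with the bracket formulas (\ref{NLts0})--(\ref{NLts}), to be an isomorphism of Lie-Yamaguti algebras making the analogue of diagram~(\ref{Ene1}) commute.

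It remains to see the two maps are mutually inverse. Starting from an extension $\mathcal{E}$ with section $s$, the discussion immediately preceding the statement already produces an explicit isomorphism $f:\hat{\mathfrak g}\to\mathfrak g\oplus_{(\chi_s,\omega_s)}\mathfrak h$, $f(a+s(x))=a+x$, making $\mathcal{E}$ equivalent to $\mathcal{E}_{(\chi_s,\omega_s)}$; hence one composite is the identity on $\mathcal{E}_{nab}(\mathfrak g,\mathfrak h)$. Starting from a non-abelian $(2,3)$-cocycle $(\chi,\omega,\mu,\theta,D,\rho,T)$, take in $\mathcal{E}_{(\chi,\omega)}$ the canonical section $s_0(x)=x$ (under the identification $\mathfrak g\hookrightarrow\mathfrak g\oplus_{(\chi,\omega)}\mathfrak h$); evaluating Eqs.~(\ref{C0})--(\ref{C3}) with the brackets (\ref{NLts0})--(\ref{NLts}) returns precisely $\chi,\omega,\mu,\theta,D,\rho,T$ again, so the other composite is the identity on $H_{nab}^{(2,3)}(\mathfrak g,\mathfrak h)$.

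The routine but bulky part, which I expect to be the main obstacle, is verifying that $F(x+a)=x+\varphi(x)+a$ is a homomorphism for both $[\ ,\ ]_{\chi}$ and $\{\ ,\ ,\ \}_{\omega}$: this unwinds into matching terms across all of Eqs.~(\ref{E1})--(\ref{E6}) once the bracket definitions are expanded, following a bookkeeping parallel to the case analysis in the proof of Proposition~\ref{LY}. Everything else (the section manipulations and the two "mutually inverse" computations) is short and formal.
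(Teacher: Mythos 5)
Your proposal follows essentially the same route as the paper: the paper defines the single map $\Theta$ sending an extension class to the class of the cocycle induced by a section, and establishes well-definedness (via $s_2=f s_1$ and Lemma~\ref{Le1}), injectivity (equivalent cocycles yield an explicit isomorphism between $\mathfrak g\oplus_{(\chi_1,\omega_1)}\mathfrak h$ and $\mathfrak g\oplus_{(\chi_2,\omega_2)}\mathfrak h$), and surjectivity (via $\mathcal{E}_{(\chi,\omega)}$ and the identification $\mathcal{E}\sim\mathcal{E}_{(\chi_s,\omega_s)}$), which is exactly your pair of mutually inverse maps repackaged. The one slip is the sign in your isomorphism: with the paper's conventions in Eqs.~(\ref{E1})--(\ref{E6}) the homomorphism $\mathfrak g\oplus_{(\chi_1,\omega_1)}\mathfrak h\to\mathfrak g\oplus_{(\chi_2,\omega_2)}\mathfrak h$ must be $f(x+a)=x-\varphi(x)+a$ rather than $x+\varphi(x)+a$ (your map is the equivalence in the opposite direction, since cocycle equivalence is symmetric under $\varphi\mapsto-\varphi$), so the argument goes through after this correction.
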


 \begin{proof}
 Define a linear map
 \begin{equation*}\Theta:\mathcal{E}_{nab}(\mathfrak g,\mathfrak h)\rightarrow H_{nab}^{(2,3)}(\mathfrak g,\mathfrak h),~\end{equation*}
where $\Theta$ assigns an equivalent class of non-abelian extensions to the class of corresponding non-abelian (2,3)-cocycles.
First, we check that $\Theta$ is well-defined.
 Assume that $\mathcal{E}_1$ and $\mathcal{E}_2$ are two equivalent non-abelian extensions of $\mathfrak g$ by $\mathfrak h$ via the map
 $f$, that is, the commutative diagram (\ref{Ene1}) holds. Let $s_1:\mathfrak g\rightarrow \hat{\mathfrak g}_1$ be a
section of $p_1$. Then $p_2fs_1=p_1s_1=I_{\mathfrak g}$, which
follows that $s_2=fs_1$ is a section of $p_2$. Let
 $(\chi_1,\omega_1,\mu_1,\theta_1,D_{1},\rho_1,T_{1})$ and
$(\chi_2,\omega_2,\mu_2,\theta_2,D_{2},\rho_2,T_{2})$ be two non-abelian (2,3)-cocycles induced
by the sections $s_1,s_2$ respectively. Then we have,
\begin{eqnarray*}\theta_1(x,y)a&=&f(\theta_1(x,y)a)=f(\{a,s_1(x),s_1(y)\}_{\hat{\mathfrak g}_1})
\\&=&\{f(a),fs_1(x),fs_1(y)\}_{\hat{\mathfrak g}_2}\\&=&\{a,s_2(x),s_2(y)\}_{\hat{\mathfrak
g}_2}\\&=&\theta_2(x,y)a .\end{eqnarray*} By the same token, we have
\begin{equation*}D_{1}(x,y)a=D_{2}(x,y)a,\chi_1(x,y)=\chi_2(x,y),\omega_1(x,y,z)=\omega_2(x,y,z),\end{equation*}
\begin{equation*}\rho_1(x)(a,b)=\rho_2(x)(a,b),T_{1}(x)(a,b)=T_{2}(x)(a,b).\end{equation*} Thus,
$(\chi_1,\omega_1,\mu_1,\theta_1,D_{1},\rho_1,T_{1})=(\chi_2,\omega_2,\mu_2,\theta_2,D_{2},\rho_2,T_{2})$,
which means that $\Theta$ is well-defined.

 Next, we verify that $\Theta$ is injective. Indeed, suppose that
 $\Theta([\mathcal{E}_1])=[(\chi_1,\omega_1)]$ and $\Theta([\mathcal{E}_2])=[(\chi_2,\omega_2)]$. If
the equivalent classes $[(\chi_1,\omega_1)]=[(\chi_2,\omega_2)]$, we obtain that the non-abelian (2,3)-cocycles
 $(\chi_1,\omega_1,\mu_1,\theta_1,D_{1},\rho_1,T_{1})$ and
$(\chi_2,\omega_2,\mu_2,\theta_2,D_{2},\rho_2,T_{2})$ are equivalent
via the linear map $\varphi:\mathfrak g\longrightarrow
\mathfrak h$, satisfying Eqs.~~(\ref{E1})-(\ref{E6}). Define a linear map
$f:\mathfrak g\oplus_{(\chi_1,\omega_1)} \mathfrak h\longrightarrow \mathfrak g\oplus_{(\chi_2,\omega_2)}
\mathfrak h$ by
\begin{equation*}f(x+a)=x-\varphi(x)+a,~~\forall~x\in \mathfrak g,a\in \mathfrak h.\end{equation*}
 According to Eq.~~(\ref{NLts}), for all $x,y,z\in \mathfrak
g,a,b,c\in \mathfrak h$, we get
\begin{eqnarray*}&&f(\{x+a,y+b,z+c\}_{\omega_1})
\\&=&f(\{x,y,z\}_{\mathfrak g}+\omega_1(x,y,z)+D_{1}(x,y)c+\theta_1(y,z)a-\theta_1(x,z)b
\\&&+T_{1}(z)(a,b)+\rho_1(x)(b,c)-\rho_1(y)(a,c)+\{a,b,c\}_{\mathfrak h})
\\&=&\{x,y,z\}_{\mathfrak g}-\varphi(\{x,y,z\}_{\mathfrak g})+\omega_1(x,y,z)+D_{1}(x,y)c+\theta_1(y,z)a-\theta_1(x,z)b
\\&&+T_{1}(z)(a,b)+\rho_1(x)(b,c)-\rho_1(y)(a,c)+\{a,b,c\}_{\mathfrak h},\end{eqnarray*}
and
\begin{eqnarray*}&&\{f(x+a),f(y+b),f(z+c)\}_{\omega_2}
\\&=&\{x-\varphi(x)+a,y-\varphi(y)+b,z-\varphi(z)+c\}_{\omega_2}
\\&=&\{x,y,z\}_{\mathfrak g}+\omega_2(x,y,z)+D_{2}(x,y)(c-\varphi(z))+\theta_2(y,z)(a-\varphi(x))-\theta_2(x,z)(b-\varphi(y))
\\&&+T_{2}(z)(a-\varphi(x),b-\varphi(y))+\rho_2(x)(b-\varphi(y),c-\varphi(z))-\rho_2(y)(a-\varphi(x),c-\varphi(z))
\\&&+\{a-\varphi(x),b-\varphi(y),c-\varphi(z)\}_{\mathfrak h})
\\&=&\{x,y,z\}_{\mathfrak g}+\omega_2(x,y,z)+D_{2}(x,y)(c-\varphi(z))+\theta_2(y,z)(a-\varphi(x))-\theta_2(x,z)(b-\varphi(y))
\\&&+T_{2}(z)(\varphi(x),\varphi(y))-T_{2}(z)(a,\varphi(y))-T_{2}(z)(\varphi(x),b)+T_{2}(z)(a,b)
\\&&+\rho_2(x)(\varphi(y),\varphi(z))-\rho_2(x)(\varphi(y),c)-\rho_2(x)(b,\varphi(z))+\rho_2(x)(b,c)
-\rho_2(y)(\varphi(x),\varphi(z))\\&&+\rho_2(y)(\varphi(x),c)+\rho_2(y)(a,\varphi(z))-\rho_2(y)(a,c)
-\{\varphi(x),\varphi(y),\varphi(z)\}_{\mathfrak h}+\{\varphi(x),\varphi(y),c\}_{\mathfrak h}\\&&+\{\varphi(x),b,\varphi(z)\}_{\mathfrak h}
-\{\varphi(x),b,c\}_{\mathfrak h}+\{a,\varphi(y),\varphi(z)\}_{\mathfrak h}-\{a,b,\varphi(z)\}_{\mathfrak h}-\{a,\varphi(y),c\}_{\mathfrak h}
+\{a,b,c\}_{\mathfrak h}
.\end{eqnarray*}
In view of Eqs.~(\ref{E1})-(\ref{E6}), we have
$f(\{x+a,y+b,z+c\}_{\omega_1})=\{f(x+a),f(y+b),f(z+c)\}_{\omega_2}$. Similarly, $f([x+a,y+b]_{\chi_1})=[f(x+a),f(y+b)]_{\chi_2}$.
Hence, $f$ is a homomorphism of Lie-Yamaguti algebras. Clearly, the
following commutative diagram holds:
\begin{equation}
\xymatrix{
 \mathcal{E}_{(\chi_1,\omega_1)}: 0 \ar[r] & \mathfrak h\ar@{=}[d] \ar[r]^-{i} & \mathfrak g\oplus_{(\chi_1,\omega_1)} \mathfrak h \ar[d]_-{f} \ar[r]^-{\pi} & \mathfrak g \ar@{=}[d] \ar[r] & 0\\
\mathcal{E}_{(\chi_2,\omega_2)}: 0 \ar[r] & \mathfrak h \ar[r]^-{i} & \mathfrak g\oplus_{(\chi_2,\omega_2)} \mathfrak h \ar[r]^-{\pi} & \mathfrak g  \ar[r] & 0
 .}\end{equation}
Thus $\mathcal{E}_{(\chi_1,\omega_1)}$ and $\mathcal{E}_{(\chi_2,\omega_2)}$ are equivalent
non-abelian extensions of $\mathfrak g$ by $\mathfrak h$,
which means that $[\mathcal{E}_{(\chi_1,\omega_1)}]=[\mathcal{E}_{(\chi_2,\omega_2)}]$. Thus, $\Theta$ is injective.

 Finally, we claim that $\Theta$ is surjective.
 For any equivalent class of non-abelian (2,3)-cocycles $[(\chi,\omega)]$, by Proposition \ref{LY}, there is
 a non-abelian extension of $\mathfrak g$ by $\mathfrak h$:
   \begin{equation*}\mathcal{E}_{(\chi,\omega)}:0\longrightarrow\mathfrak h\stackrel{i}{\longrightarrow} \mathfrak g\oplus_{(\chi,\omega)} \mathfrak h\stackrel{\pi}{\longrightarrow}\mathfrak g\longrightarrow0.\end{equation*}
   Therefore, $\Theta([\mathcal{E}_{(\chi,\omega)}])=[(\chi,\omega)]$, which follows that $\Theta$ is surjective.
   In all, $\Theta$ is bijective. This finishes the proof.

 \end{proof}

\section{Non-abelian extensions in terms of Maurer-Cartan elements}
 In this section, we classify the non-abelian extensions using Maurer-Cartan
 elements. We start with recalling the Maurer-Cartan elements from \cite {049}.

Let $(L=\oplus_{i}L_{i},[ \ , \ ], d)$ be a differential graded Lie
algebra. The set $\mathrm{MC}(L)$ of Maurer-Cartan elements of $(L,[ \ , \ ],
d)$ is defined by
$$\mathrm{MC}(L)=\{\eta\in L_1|d \eta+\frac{1}{2}[\eta,\eta]=0\}.$$
Moreover, $\eta_0,\eta_1\in \mathrm{MC}(L)$ are called gauge equivalent if
and only if there exists an element $\varphi\in L_{0}$ such that
\begin{equation*}\eta_1=e^{ad_{\varphi}}\eta_{0}-\frac{e^{ad_{\varphi}}-1}{ad_{\varphi}}d\varphi.\end{equation*}

Let $\mathfrak g$ be a vector space. Denote by
\begin{equation*}\mathbb{C}(\mathfrak g,\mathfrak g)
=\mathrm{Hom}(
\wedge^{2}\mathfrak g\otimes\mathfrak g,\mathfrak g)\times \mathrm{Hom}(\wedge^{2}\mathfrak g\otimes\wedge^{2}\mathfrak g,\mathfrak g)\end{equation*}
and
\begin{equation}\label{DLY}
	\mathcal{C}^{n}(\mathfrak g,\mathfrak g)=\left\{
	\begin{aligned}
		&\mathrm{Hom}(\mathfrak g,\mathfrak g),&n=0,\\
		&\mathrm{Hom}(\underbrace{
\wedge^{2}\mathfrak g\otimes \cdot\cdot\cdot \otimes\wedge^{2}\mathfrak g }_{n},\mathfrak g)\times \mathrm{Hom}(\underbrace{\wedge^{2}\mathfrak g\otimes \cdot\cdot\cdot \otimes\wedge^{2}\mathfrak g}_n \otimes\mathfrak
g,\mathfrak g),&n\geq 1,
	\end{aligned}
	\right.
\end{equation}
Then $\mathcal{L}^{*}(\mathfrak g,\mathfrak g)=\mathcal{C}^{*}(\mathfrak g,\mathfrak g)\oplus \mathbb{C}(\mathfrak g,\mathfrak g)=\oplus_{n\geq
0}\mathcal{C}^{n}(\mathfrak g,\mathfrak g)\oplus \mathbb{C}(\mathfrak g,\mathfrak g)$, where the degree of elements
in $\mathcal{C}^{n}(\mathfrak g,\mathfrak g)$ is $n$, the degree of elements
in $\mathbb{C}(\mathfrak g,\mathfrak g)$ is $1$ and
$f\in \mathrm{Hom}(\otimes^{2n+1}\mathfrak
g,\mathfrak g)$ satisfying
\begin{align}\label{Co1}f(x_1,\cdot\cdot\cdot,x_{2i-1},x_{2i},\cdot\cdot\cdot,x_{n})=0,~~\hbox{if} ~x_{2i-1}=x_{2i},~\forall~i=1,2,\cdot\cdot\cdot,[\frac{n}{2}].\end{align}
For all $P=(P_I,P_{II})\in \mathcal{C}^{p}(\mathfrak g,\mathfrak g),
Q=(Q_I,Q_{II})\in \mathcal{C}^{q}(\mathfrak g,\mathfrak g)~(p,q\geq 1)$, denote by
\begin{equation*}P\circ Q=((P\circ Q)_{I},(P\circ Q)_{II})\in \mathcal{C}^{p+q}(\mathfrak g,\mathfrak g).\end{equation*}
The definition of $P\circ Q$ is given in \cite {8}. In detail,
\begin{align*}&(P\circ Q)_{I}(X_1,\cdot\cdot\cdot,X_{p+q})\\
=&\sum_{\substack{\sigma\in sh(p,q),\\\sigma(p+q)=p+q}}(-1)^{pq}sgn(\sigma)
P_{II}(X_{\sigma(1)},\cdot\cdot\cdot,X_{\sigma(p)},Q_{I}(X_{\sigma(p+1)},\cdot\cdot\cdot,X_{\sigma(p+q)})
\\&+\sum_{\substack{k=1,\\\sigma\in sh(k-1,q)}}^{p}(-1)^{q(k-1)}sgn(\sigma)
P_{I}(X_{\sigma(1)},\cdot\cdot\cdot,X_{\sigma(k-1)},x_{q+k}\wedge Q_{II}(X_{\sigma(k)},\cdot\cdot\cdot,X_{\sigma(k+q-1)},y_{k+q}),
X_{k+q+1},\cdot\cdot\cdot,X_{p+q})
\\&+\sum_{\substack{k=1,\\\sigma\in sh(k-1,q)}}^{p}(-1)^{q(k-1)}sgn(\sigma)
P_{I}(X_{\sigma(1)},\cdot\cdot\cdot,X_{\sigma(k-1)},Q_{II}(X_{\sigma(k)},\cdot\cdot\cdot,X_{\sigma(k+q-1)},x_{k+q})
\wedge y_{q+k},X_{k+q+1},\cdot\cdot\cdot,X_{p+q}),
\end{align*}
and
\begin{align*}&(P\circ Q)_{II}(X_1,\cdot\cdot\cdot,X_{p+q},z)\\
=&\sum_{\sigma\in sh(p,q)}(-1)^{pq}sgn(\sigma)P_{II}(X_{\sigma(1)},\cdot\cdot\cdot,X_{\sigma(p)},Q_{II}(X_{\sigma(p+1)},\cdot\cdot\cdot,X_{\sigma(p+q)},z)
\\&+\sum_{\substack{k=1,\\\sigma\in sh(k-1,q)}}^{p}(-1)^{q(k-1)}sgn(\sigma)
P_{II}(X_{\sigma(1)},\cdot\cdot\cdot,X_{\sigma(k-1)},x_{q+k}\wedge Q_{II}(X_{\sigma(k)},\cdot\cdot\cdot,X_{\sigma(k+q-1)},y_{k+q}),
X_{k+q+1},\cdot\cdot\cdot,X_{p+q},z)
\\&+\sum_{\substack{k=1,\\\sigma\in sh(k-1,q)}}^{p}(-1)^{q(k-1)} sgn(\sigma)
P_{II}(X_{\sigma(1)},\cdot\cdot\cdot,X_{\sigma(k-1)},Q_{II}(X_{\sigma(k)},\cdot\cdot\cdot,X_{\sigma(k+q-1)},x_{k+q})
\wedge y_{q+k},X_{k+q+1},\cdot\cdot\cdot,X_{p+q},z).
\end{align*}
In particular, for $f\in \mathcal{C}^{0}(\mathfrak g,\mathfrak g)=\mathrm{Hom}(\mathfrak g,\mathfrak g)$
and $P=(P_I,P_{II})\in \mathcal{C}^{p}(\mathfrak g,\mathfrak g)$, define
\begin{align}(P\circ f)_{I}(X_1,\cdot\cdot\cdot,X_{p})
=&\nonumber \sum_{k=1}^{p}P_{I}(X_{1},\cdot\cdot\cdot,X_{k-1},x_{k}\wedge f(y_k),X_{k+1},\cdot\cdot\cdot,X_{p})
\\&+\label{DLA1}\sum_{k=1}^{p}P_{I}(X_{1},\cdot\cdot\cdot,X_{k-1},f(x_{k})\wedge y_k,X_{k+1},\cdot\cdot\cdot,X_{p}),
\end{align}
\begin{equation}\label{DLA2}(f\circ P)_{I}(X_1,\cdot\cdot\cdot,X_{p})=f(P_{I}(X_{1},\cdot\cdot\cdot,X_{p})),
\end{equation}
\begin{align}(P\circ f)_{II}(X_1,\cdot\cdot\cdot,X_{p},z)\nonumber=&\sum_{k=1}^{p}P_{II}(X_{1},\cdot\cdot\cdot,X_{k-1},x_{k}\wedge f(y_k),X_{k+1},\cdot\cdot\cdot,X_{p},z)
\\&\label{DLA3}+\sum_{k=1}^{p}P_{I}(X_{1},\cdot\cdot\cdot,X_{k-1},f(x_{k})\wedge y_k,X_{k+1},\cdot\cdot\cdot,X_{p},z),
\end{align}
\begin{equation}\label{DLA4}(f\circ P)_{II}(X_1,\cdot\cdot\cdot,X_{p},z)=f(P_{II}(X_{1},\cdot\cdot\cdot,X_{p},z)).
\end{equation}
In order to derive sufficient and necessary conditions of Lie-Yamaguti algebras in
terms of Maurer-Cartan element of some graded Lie algebras, we define $P\bullet Q$ as follows:
for all $P=(P_I,P_{II})\in \mathcal{C}^{n}(\mathfrak g,\mathfrak g),~Q=(Q_I,Q_{II})\in \mathcal{C}^{n}(\mathfrak g,\mathfrak g)$,
\begin{equation}\label{DLY}
	P\bullet Q=\left\{
	\begin{aligned}
		&0,&p,q\neq1,\\
		&((P\bullet Q)_{I},(P\bullet Q)_{II}),&p=q=1,
	\end{aligned}
	\right.
\end{equation}
where
\begin{align*}&(P\bullet Q)_{I}(x_1,x_2,x_{3})\\
=&\frac{1}{2}\sum_{\sigma\in S_{3}}sgn(\sigma)
P_{I}(Q_{I}(x_{\sigma(1)},x_{\sigma(2)}),x_{\sigma(3)})
+\frac{1}{2}\sum_{\sigma\in S_{3}}sgn(\sigma)Q_{II}(x_{\sigma(1)},x_{\sigma(2)},x_{\sigma(3)}),
\end{align*}
\begin{align*}(P\bullet Q)_{II}(x_1,x_2,x_{3},x_{4})=\frac{1}{2}\sum_{\sigma\in S_{3}}sgn(\sigma)
P_{II}(Q_{I}(x_{\sigma(1)},x_{\sigma(2)}),x_{\sigma(3)},x_4).
\end{align*}

Let $\mathfrak g$ and $V$ be vector spaces. For any
$(\chi,\omega),(\chi',\omega')\in\mathcal{C}^{1}(\mathfrak g,\mathfrak g)$, put $\Pi=(\chi,\omega),~\Pi'=(\chi',\omega')$
and $\Pi+\Pi'=(\chi+\chi',\omega+\omega')$.

\begin{pro} \label{pro:Dga1} With the above notations, $(\mathcal{L}^{*}(\mathfrak g,\mathfrak g),[ \ , \
]_{LY})$ is a graded Lie algebra, where
\begin{equation}\label{DLY}
	[P,Q]_{LY}=\left\{
	\begin{aligned}
		&P\bullet Q+Q\bullet P+P\circ Q+Q\circ P,&p=q=1,\\
		&P\circ Q-(-1)^{pq}Q\circ P,&otherwise,
	\end{aligned}
	\right.
\end{equation}
for all $P\in \mathcal{C}^{p}(\mathfrak g,\mathfrak g),Q\in \mathcal{C}^{q}(\mathfrak g,\mathfrak g).$
Furthermore, $\Pi=(\chi,\omega)\in \mathcal{C}^{1}(\mathfrak
g,\mathfrak g)$ defines a Lie-Yamaguti algebra structure on $\mathfrak
g$ if and only if $[ \ , \ ]_{LY}=0$, that is, $\Pi$ is a
Maurer-Cartan element of the graded Lie algebra $(\mathcal{L}^{*}(\mathfrak
g,\mathfrak g),[ \ , \ ]_{LY})$. We write $[ P , Q]_{LY}=([P,Q]_{I},[P,Q]_{II}).$
\end{pro}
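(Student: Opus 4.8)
The plan is to establish the two assertions in turn: that $[\ ,\ ]_{LY}$ makes $\mathcal{L}^{*}(\mathfrak g,\mathfrak g)$ a graded Lie algebra, and that a degree-one element $\Pi=(\chi,\omega)\in\mathcal{C}^{1}(\mathfrak g,\mathfrak g)$ is a Maurer--Cartan element precisely when it satisfies Eqs.~\eqref{eq2.1}--\eqref{eq2.6}.

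Graded antisymmetry $[P,Q]_{LY}=-(-1)^{pq}[Q,P]_{LY}$ is read off directly: in the generic branch $[P,Q]_{LY}=P\circ Q-(-1)^{pq}Q\circ P$ it is built in, and in the branch $p=q=1$ one has $-(-1)^{pq}=1$ while $P\bullet Q+Q\bullet P+P\circ Q+Q\circ P$ is manifestly symmetric under $P\leftrightarrow Q$. The substance is the graded Jacobi identity, which I would verify by a case analysis on the degrees $p,q,r$ of $P,Q,R$. When none of the degrees forces a $\bullet$-term to appear and no argument lies in the extra summand $\mathbb{C}(\mathfrak g,\mathfrak g)$, the bracket is the graded commutator of the composition $\circ$, and the Jacobi identity follows from a graded right-symmetry of the associator of $\circ$ --- symmetry of $(P\circ Q)\circ R-P\circ(Q\circ R)$ under $Q\leftrightarrow R$ up to sign --- which is the shuffle-permutation bookkeeping for the Yamaguti composition already carried out in \cite{8}; I would invoke it rather than redo it. The finitely many remaining cases --- at least one argument in $\mathcal{C}^{1}(\mathfrak g,\mathfrak g)$, so that $P\bullet Q$ and $Q\bullet P$ enter, or an argument in $\mathbb{C}(\mathfrak g,\mathfrak g)$, on which $\circ$ degenerates --- are checked one at a time using the explicit low-arity formulas for $\bullet$ and for composition with degree-zero maps, Eqs.~\eqref{DLA1}--\eqref{DLA4}.

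For the Maurer--Cartan characterisation, note that since $\Pi$ has degree $1$ the definition gives $[\Pi,\Pi]_{LY}=2(\Pi\bullet\Pi+\Pi\circ\Pi)$, so $\Pi$ is Maurer--Cartan if and only if $\Pi\bullet\Pi+\Pi\circ\Pi=0$, and I would expand the four components. A single antisymmetrisation over $S_{3}$ identifies $(\Pi\bullet\Pi)_{I}(x_1,x_2,x_3)$ with the cyclic sum of $[[x_1,x_2],x_3]+\{x_1,x_2,x_3\}$ and $(\Pi\bullet\Pi)_{II}(x_1,x_2,x_3,x_4)$ with the sum of $\{[x_1,x_2],x_3,x_4\}$ over cyclic permutations of $x_1,x_2,x_3$ --- precisely the left-hand sides of Eqs.~\eqref{eq2.3} and \eqref{eq2.4}; and expanding $(\Pi\circ\Pi)_{I}$, $(\Pi\circ\Pi)_{II}$ from the $\circ$-formula with $p=q=1$ reproduces, up to an overall sign, the difference of the two sides of Eqs.~\eqref{eq2.5} and \eqref{eq2.6}. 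Since $\Pi\bullet\Pi$ and $\Pi\circ\Pi$ sit in the distinct direct summands $\mathbb{C}(\mathfrak g,\mathfrak g)$ and $\mathcal{C}^{2}(\mathfrak g,\mathfrak g)$ of $\mathcal{L}^{*}(\mathfrak g,\mathfrak g)$, the equation $\Pi\bullet\Pi+\Pi\circ\Pi=0$ is equivalent to these four identities holding simultaneously; as Eq.~\eqref{eq2.1} is automatic from $\mathcal{C}^{1}(\mathfrak g,\mathfrak g)$ being built on $\wedge^{2}\mathfrak g$ and $\wedge^{2}\mathfrak g\otimes\mathfrak g$, this yields $[\Pi,\Pi]_{LY}=0$ if and only if $(\mathfrak g,\chi,\omega)$ is a Lie-Yamaguti algebra.

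The step I expect to be the genuine obstacle is the graded Jacobi identity: $\circ$ is a sum of shuffle terms of three different shapes --- the plain substitution $P_{II}(\ldots,Q_{I}(\ldots))$ and the two insertion families $x\wedge Q_{II}(\ldots)$ and $Q_{II}(\ldots)\wedge y$ --- so matching all of them across a double bracket while simultaneously tracking the $\bullet$-correction in degree $1$ and the degenerate behaviour of $\circ$ on $\mathbb{C}(\mathfrak g,\mathfrak g)$ demands careful sign and shuffle bookkeeping. By contrast, graded antisymmetry and the entire Maurer--Cartan computation are routine unwindings of the definitions.
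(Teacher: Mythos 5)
Your proposal is correct and follows essentially the same route as the paper: the paper also reduces the verification that $[\ ,\ ]_{LY}$ is a graded Lie bracket to the shuffle/sign bookkeeping of \cite{8} (citing its Proposition 4.1 and Theorem 3.1), and then expands $[\Pi,\Pi]_{LY}=2\Pi\circ\Pi+2\Pi\bullet\Pi$ exactly as you do, with $(\Pi\bullet\Pi)_{I}$, $(\Pi\bullet\Pi)_{II}$ giving the cyclic identities \eqref{eq2.3}--\eqref{eq2.4} and $\Pi\circ\Pi$ giving \eqref{eq2.5}--\eqref{eq2.6}. You are somewhat more explicit than the paper about graded antisymmetry, the direct-summand argument splitting the Maurer--Cartan equation into the separate axioms, and the case structure of the Jacobi identity, but the substance and the reliance on \cite{8} are the same.
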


\begin{proof} Take the same procedure of the proof of Proposition 4.1 \cite{8}, we can check that Eq.~(\ref{Co1}) holds.
Clearly, Eq.~(\ref{Co1}) implies that Eq.~(\ref{eq2.1}) holds.
For all $\Pi=(\chi,\omega)\in \mathcal{C}^{1}(\mathfrak g,\mathfrak g)$, $[ \Pi,\Pi ]_{LY}=2\Pi\circ \Pi+2\Pi\bullet \Pi$
and for all $x_1,x_2,x_3,x_4\in \mathfrak g$, we have
\begin{align*}(\Pi\bullet \Pi)_{I}(x_1,x_2,x_3)=&\chi(\chi(x_1,x_2),x_3)+\chi(\chi(x_2,x_3),x_1)+\chi(\chi(x_3,x_1),x_2)
\\&+\omega (x_1,x_2,x_3)+\omega(x_2,x_3,x_1)+\omega(x_3,x_1,x_2),
\end{align*}
and
\begin{align*}&(\Pi\bullet \Pi)_{II}(x_1,x_2,x_3,x_4)\\=&
\omega (\chi(x_1,x_2),x_3,x_4)+\omega(\chi(x_2,x_3),x_1,x_4)+\omega(\chi(x_3,x_1),x_2,x_4).
\end{align*}
Combining Theorem 3.1 \cite{8}, $\Pi=(\chi,\omega)\in \mathcal{C}^{1}(\mathfrak
g,\mathfrak g)$ defines a Lie-Yamaguti algebra structure on $\mathfrak
g$ if and only if $\Pi$ is a
Maurer-Cartan element of the graded Lie algebra $(\mathcal{L}^{*}(\mathfrak
g,\mathfrak g),[ \ , \ ]_{LY})$.
\end{proof}

By Proposition \ref{pro:Dga1}, we rewrite Theorem 3.3 \cite{8} as follows:
\begin{thm} \label{pro:Dga2} Let $(\mathfrak g,\chi_{\mathfrak g},\omega_{\mathfrak g})$ be a Lie-Yamaguti algebra. Then
 $(\mathcal{L}^{*}(\mathfrak
g,\mathfrak g),[ \ , \ ]_{LY},d_{\Pi})$ is a differential graded
Lie algebra, where $d_{\Pi}$ with $\Pi=(\chi_{\mathfrak g},\omega_{\mathfrak g})$ is given by
\begin{equation}\label{MC5}d_{\Pi}(\nu)=[\Pi,\nu]_{LY},~~\forall~\nu\in \mathcal{C}^{n-1}(\mathfrak
g,\mathfrak g).\end{equation}
Moreover, $\Pi+\Pi'$ with $\Pi'\in \mathcal{C}^{1}(\mathfrak
g,\mathfrak g)$ defines a Lie-Yamaguti algebra structure on $\mathfrak
g$ if and only if $\Pi'$ is a Maurer-Cartan element of the
differential graded Lie algebra $(\mathcal{L}^{*}(\mathfrak
g,\mathfrak g),[ \ , \ ]_{LY},d_{\Pi})$.
\end{thm}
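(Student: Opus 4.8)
The plan is to derive the statement directly from Proposition \ref{pro:Dga1} by the standard mechanism of twisting a graded Lie algebra by a Maurer-Cartan element, mirroring Theorem 3.3 of \cite{8}.

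First I would check that $d_\Pi=[\Pi,-]_{LY}$ turns $(\mathcal{L}^{*}(\mathfrak g,\mathfrak g),[\ ,\ ]_{LY})$ into a differential graded Lie algebra. Since $(\mathfrak g,\chi_{\mathfrak g},\omega_{\mathfrak g})$ is a Lie-Yamaguti algebra, Proposition \ref{pro:Dga1} says that $\Pi=(\chi_{\mathfrak g},\omega_{\mathfrak g})$ is a Maurer-Cartan element of this graded Lie algebra, that is, $[\Pi,\Pi]_{LY}=0$. As $\Pi$ has (odd) degree $1$, the graded Jacobi identity yields, for every $\nu\in\mathcal{C}^{n-1}(\mathfrak g,\mathfrak g)$,
\[
d_\Pi^{2}(\nu)=[\Pi,[\Pi,\nu]_{LY}]_{LY}=\frac{1}{2}[[\Pi,\Pi]_{LY},\nu]_{LY}=0,
\]
so $d_\Pi$ is a degree $1$ differential; the same graded Jacobi identity shows $d_\Pi$ is a graded derivation of $[\ ,\ ]_{LY}$. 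Hence $(\mathcal{L}^{*}(\mathfrak g,\mathfrak g),[\ ,\ ]_{LY},d_\Pi)$ is a differential graded Lie algebra.

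Next, for the Maurer-Cartan characterization I would expand the defining bracket of $\Pi+\Pi'$. Since $\Pi,\Pi'\in\mathcal{C}^{1}(\mathfrak g,\mathfrak g)$ are both of odd degree, $[\Pi,\Pi']_{LY}=[\Pi',\Pi]_{LY}$, and therefore, using $[\Pi,\Pi]_{LY}=0$ and $d_\Pi(\Pi')=[\Pi,\Pi']_{LY}$,
\[
[\Pi+\Pi',\Pi+\Pi']_{LY}=[\Pi,\Pi]_{LY}+2[\Pi,\Pi']_{LY}+[\Pi',\Pi']_{LY}=2\bigl(d_\Pi(\Pi')+\frac{1}{2}[\Pi',\Pi']_{LY}\bigr).
\]
Thus $[\Pi+\Pi',\Pi+\Pi']_{LY}=0$ if and only if $d_\Pi(\Pi')+\frac{1}{2}[\Pi',\Pi']_{LY}=0$, i.e.\ if and only if $\Pi'$ is a Maurer-Cartan element of the differential graded Lie algebra $(\mathcal{L}^{*}(\mathfrak g,\mathfrak g),[\ ,\ ]_{LY},d_\Pi)$. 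By Proposition \ref{pro:Dga1}, the vanishing of $[\Pi+\Pi',\Pi+\Pi']_{LY}$ is in turn equivalent to $\Pi+\Pi'$ defining a Lie-Yamaguti algebra structure on $\mathfrak g$, which gives the asserted equivalence.

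The two displayed computations are purely formal consequences of the graded Jacobi identity; the only genuinely Lie-Yamaguti-specific input is Proposition \ref{pro:Dga1} (equivalently Proposition 4.1 and Theorem 3.1 of \cite{8}), where one must verify that the operations $\circ$ and $\bullet$, together with the exceptional degree-$1$ summand $\mathbb{C}(\mathfrak g,\mathfrak g)$, really assemble into a graded Lie bracket with the correct signs. Granting that, I expect no further obstacle: the present theorem is just the twisting statement, and I would write it up as a short deduction from Proposition \ref{pro:Dga1}, in parallel with Theorem 3.3 of \cite{8}.
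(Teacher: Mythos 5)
Your proposal is correct and is exactly the standard twisting argument that the paper itself relies on: the paper offers no written proof of this theorem, simply deducing it from Proposition \ref{pro:Dga1} and Theorem 3.3 of \cite{8}, and your computation of $d_\Pi^2=0$ via the graded Jacobi identity together with the expansion of $[\Pi+\Pi',\Pi+\Pi']_{LY}$ supplies precisely the omitted details. You also correctly identify that the only nontrivial input is the verification (in Proposition \ref{pro:Dga1}) that $[\ ,\ ]_{LY}$, including its exceptional $p=q=1$ case, is a genuine graded Lie bracket.
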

Denote
\begin{equation*}\bar{\mu}(x+a,y+b)=\mu(x)b-\mu(y)a\end{equation*}
and
 \begin{equation*}\bar{\theta}(x+a,y+b,z+c)=D(x,y)c+\theta(y,z)a-\theta(x,z)b\end{equation*}
 for all $x,y,z\in \mathfrak g$ and $a,b,c\in V$.

\begin{pro} \label{pro:Dga3} With the above notations, $(V,\mu,\theta,D)$ is a representation of Lie-Yamaguti algebra
 $(\mathfrak g,\chi_{\mathfrak g},\omega_{\mathfrak g})$ if and only if $\bar{\Pi}\in\mathcal{L}^{*}(\mathfrak
g\ltimes V,\mathfrak g\ltimes V)$ is a Maurer-Cartan element of the differential graded Lie
algebra $(\mathcal{L}^{*}(\mathfrak
g\ltimes V,\mathfrak g\ltimes V),[ \ , \ ]_{LY},d_{\bar{\Pi}})$, where $\bar{\Pi}=(\bar{\mu},\bar{\theta}).$
\end{pro}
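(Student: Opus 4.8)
The plan is to reduce the statement to Theorem~\ref{pro:Dga2} together with the semidirect product characterisation of representations recalled in Section~2. Regard the underlying vector space of $\mathfrak g\ltimes V$ as $\mathfrak g\oplus V$, and let $\Pi_0\in\mathcal{C}^{1}(\mathfrak g\oplus V,\mathfrak g\oplus V)$ be the trivial extension of $(\chi_{\mathfrak g},\omega_{\mathfrak g})$, i.e.\ the semidirect product structure associated to the zero representation $(V,0,0,0)$, in which $V$ is a trivial ideal. This is a genuine Lie-Yamaguti structure on $\mathfrak g\oplus V$, hence by Proposition~\ref{pro:Dga1} a Maurer-Cartan element of $(\mathcal{L}^{*}(\mathfrak g\oplus V,\mathfrak g\oplus V),[\ ,\ ]_{LY})$, and by Theorem~\ref{pro:Dga2} the triple $(\mathcal{L}^{*}(\mathfrak g\oplus V,\mathfrak g\oplus V),[\ ,\ ]_{LY},d_{\Pi_0})$ is a differential graded Lie algebra in which $\Pi_0+\Pi'$ defines a Lie-Yamaguti structure on $\mathfrak g\oplus V$ if and only if $\Pi'$ is a Maurer-Cartan element. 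This is the differential graded Lie algebra appearing in the statement, its differential being the one determined by the Lie-Yamaguti structure of $\mathfrak g$ transported to $\mathfrak g\oplus V$.

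Next I would record the purely bookkeeping identity $\Pi_0+\bar\Pi=\Pi_{\mathfrak g\ltimes V}$: the bilinear part of $\Pi_0+\bar\Pi$ sends $(x+a,y+b)$ to $[x,y]_{\mathfrak g}+\mu(x)b-\mu(y)a$ and its trilinear part sends $(x+a,y+b,z+c)$ to $\{x,y,z\}_{\mathfrak g}+\theta(y,z)a-\theta(x,z)b+D(x,y)c$, which are exactly the semidirect product brackets of Section~2, since $\bar\mu$ and $\bar\theta$ supply precisely the mixed terms and $\Pi_0$ supplies the $\mathfrak g$-terms (there are no terms quadratic in $V$ on either side). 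Then, by the semidirect product proposition of Section~2, $\mathfrak g\ltimes V=(\mathfrak g\oplus V,\Pi_0+\bar\Pi)$ is a Lie-Yamaguti algebra if and only if $(V,\mu,\theta,D)$ is a representation of $\mathfrak g$.

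Chaining the two equivalences gives the claim: $(V,\mu,\theta,D)$ is a representation of $\mathfrak g$ $\iff$ $\Pi_0+\bar\Pi$ is a Lie-Yamaguti structure on $\mathfrak g\oplus V$ $\iff$ $\bar\Pi$ is a Maurer-Cartan element of $(\mathcal{L}^{*}(\mathfrak g\ltimes V,\mathfrak g\ltimes V),[\ ,\ ]_{LY},d_{\Pi_0})$. The main obstacle is the identification $\Pi_0+\bar\Pi=\Pi_{\mathfrak g\ltimes V}$, but this is routine once one unwinds the definitions of $P\circ Q$ and $P\bullet Q$ for degree-one arguments; all the real content has already been packaged into Theorem~\ref{pro:Dga2} and the semidirect product proposition. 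A more self-contained alternative would be to expand the Maurer-Cartan equation $d_{\Pi_0}\bar\Pi+\tfrac12[\bar\Pi,\bar\Pi]_{LY}=0$ directly and match the resulting identities slot by slot with Eqs.~(\ref{eq2.9})--(\ref{eq2.7}), but this is considerably longer and yields no additional insight.
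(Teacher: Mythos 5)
Your proof is correct and follows the same route as the paper, whose entire proof of this proposition is the one-line citation of Theorem~\ref{pro:Dga2}; you have simply supplied the details the paper leaves implicit (the trivial extension $\Pi_0$ of $(\chi_{\mathfrak g},\omega_{\mathfrak g})$ to $\mathfrak g\oplus V$, the bookkeeping identity that $\Pi_0+\bar\Pi$ is exactly the semidirect-product structure, and the appeal to the semidirect-product proposition of Section~2). Note that you have tacitly read the differential in the statement as $d_{\Pi_0}$ rather than the literal $d_{\bar\Pi}$; this is the only reading under which the statement holds (with $d_{\bar\Pi}$ the Maurer--Cartan equation would collapse to $[\bar\Pi,\bar\Pi]_{LY}=0$, which never sees the bracket of $\mathfrak g$ and so cannot encode Eqs.~(\ref{eq2.9})--(\ref{eq2.7})), so your interpretation is the right one and is worth stating explicitly.
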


\begin{proof} It follows from Theorem \ref{pro:Dga2}.
\end{proof}

Let $(\mathfrak g,\chi_{\mathfrak g},\omega_{\mathfrak g})$
and $(\mathfrak h, \chi_{\mathfrak h},\omega_{\mathfrak h})$ be two Lie-Yamaguti algebras. Then $(\mathfrak
g\oplus \mathfrak h,\chi_{\mathfrak
g\oplus \mathfrak h},\omega_{\mathfrak
g\oplus \mathfrak h})$ is a Lie-Yamaguti algebra, where $\chi_{\mathfrak
g\oplus \mathfrak h},\omega_{\mathfrak
g\oplus \mathfrak h}$ are defined by
\begin{equation*}\chi_{\mathfrak
g\oplus \mathfrak h}(x + a, y +
b) =\chi_{\mathfrak g}(x, y) +\chi_{\mathfrak h}(a,b),~~\omega_{\mathfrak
g\oplus \mathfrak h}(x + a, y +
b, z + c) =\omega_{\mathfrak g}(x, y, z) +\omega_{\mathfrak h}(a,b,c)\end{equation*}
 for all $x,y,z\in \mathfrak g,a,b,c\in \mathfrak h$.

In view of Theorem \ref{pro:Dga2}, $(\mathcal{L}^{*}(\mathfrak g\oplus
\mathfrak h,\mathfrak g\oplus \mathfrak h),[ \ , \
]_{LY},d_{(\chi_{\mathfrak
g\oplus \mathfrak h},\omega_{\mathfrak
g\oplus \mathfrak h})})$ is a differential
graded Lie algebra.
Define $\mathcal{C}_{>}^{n}(\mathfrak g\oplus \mathfrak
h,\mathfrak h)\subset \mathcal{C}^{n}(\mathfrak g\oplus \mathfrak h,\mathfrak
h),~\mathbb{C}_{>}(\mathfrak g\oplus \mathfrak
h,\mathfrak h)\subset \mathbb{C}(\mathfrak g\oplus \mathfrak h,\mathfrak
h)$ respectively by
$$\mathcal{C}^{n}(\mathfrak g\oplus \mathfrak h,\mathfrak
h)=\mathcal{C}_{>}^{n}(\mathfrak g\oplus \mathfrak h,\mathfrak h)\oplus \mathcal{C}^{n}(
\mathfrak h,\mathfrak h),~
\mathbb{C}(\mathfrak g\oplus \mathfrak h,\mathfrak h)=\mathbb{C}_{>}(\mathfrak g\oplus \mathfrak h,\mathfrak h)\oplus \mathbb{C}(
\mathfrak h,\mathfrak h).$$ Denote by $\mathcal{C}_{>}(\mathfrak g\oplus
\mathfrak h,\mathfrak h)=\oplus_{n}\mathcal{C}_{>}^{n}(\mathfrak g\oplus
\mathfrak h,\mathfrak h)$ and $\mathcal{L}_{>}(\mathfrak g\oplus
\mathfrak h,\mathfrak h)=\mathcal{C}_{>}(\mathfrak g\oplus
\mathfrak h,\mathfrak h)\oplus \mathbb{C}_{>}(\mathfrak g\oplus \mathfrak h,\mathfrak h)$.

Similar to the case of $3$-Lie algebras \cite{050}, we have

\begin{pro} With the above notations, $(\mathcal{L}_{>}(\mathfrak g\oplus
\mathfrak h,\mathfrak h),[ \ , \ ]_{LY},d_{(\chi_{\mathfrak
g\oplus \mathfrak h},\omega_{\mathfrak
g\oplus \mathfrak h})})$ is a differential graded Lie subalgebra of
$(\mathcal{L}^{*}(\mathfrak g\oplus \mathfrak h,\mathfrak g\oplus \mathfrak
h),[ \ , \ ]_{LY},d_{(\chi_{\mathfrak
g\oplus \mathfrak h},\omega_{\mathfrak
g\oplus \mathfrak h})})$.
\end{pro}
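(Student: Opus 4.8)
The plan is to verify directly that $\mathcal{L}_{>}(\mathfrak g\oplus\mathfrak h,\mathfrak h)$ is a graded subspace closed under the bracket $[\ ,\ ]_{LY}$ and under the differential $d_{(\chi_{\mathfrak g\oplus\mathfrak h},\omega_{\mathfrak g\oplus\mathfrak h})}$ of the ambient differential graded Lie algebra $(\mathcal{L}^{*}(\mathfrak g\oplus\mathfrak h,\mathfrak g\oplus\mathfrak h),[\ ,\ ]_{LY},d_{(\chi_{\mathfrak g\oplus\mathfrak h},\omega_{\mathfrak g\oplus\mathfrak h})})$ of Theorem~\ref{pro:Dga2}; the graded Jacobi identity, the graded Leibniz rule and $d^{2}=0$ are then inherited for free. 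Two preliminary observations organize the argument. First, the embedding $\mathcal{C}^{n}(\mathfrak h,\mathfrak h)\hookrightarrow\mathcal{C}^{n}(\mathfrak g\oplus\mathfrak h,\mathfrak h)$ is pullback along the projection $\mathfrak g\oplus\mathfrak h\to\mathfrak h$, and the complement $\mathcal{C}_{>}^{n}$ (and likewise $\mathbb{C}_{>}$) is the space of cochains vanishing on all tuples of arguments lying in $\mathfrak h$; thus $\mathcal{L}_{>}(\mathfrak g\oplus\mathfrak h,\mathfrak h)$ is exactly the graded space of $\mathfrak h$-valued cochains that vanish whenever all of their arguments lie in $\mathfrak h$. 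Second, $\chi_{\mathfrak g\oplus\mathfrak h}=\chi_{\mathfrak g}\oplus\chi_{\mathfrak h}$ and $\omega_{\mathfrak g\oplus\mathfrak h}=\omega_{\mathfrak g}\oplus\omega_{\mathfrak h}$ are block diagonal; consequently $\mathfrak h$ is an ideal of $\mathfrak g\oplus\mathfrak h$, meaning that whenever at least one input of $\chi_{\mathfrak g\oplus\mathfrak h}$ or $\omega_{\mathfrak g\oplus\mathfrak h}$ lies in $\mathfrak h$, its $\mathfrak g$-component is a value of $\chi_{\mathfrak g}$ or $\omega_{\mathfrak g}$ on a tuple with a zero entry, hence vanishes, and the value lies in $\mathfrak h$. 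Write $\Pi=(\chi_{\mathfrak g\oplus\mathfrak h},\omega_{\mathfrak g\oplus\mathfrak h})$ and $d_{\Pi}$ for the differential.

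Closure under the bracket is the easy half. For $P,Q\in\mathcal{L}_{>}$, each summand of $[P,Q]_{LY}$ — using the explicit formulas for $P\circ Q$, $Q\circ P$ and, in bidegree $(1,1)$, for $P\bullet Q$, $Q\bullet P$ — is obtained by evaluating one of $P,Q$ on a tuple one of whose entries is a value of the other. Since $P$ and $Q$ are $\mathfrak h$-valued, every summand is $\mathfrak h$-valued; and on an input tuple entirely contained in $\mathfrak h$, the innermost of $P,Q$ is applied to a tuple all of whose entries lie in $\mathfrak h$, hence produces $0$, so the summand vanishes by multilinearity. Therefore $[P,Q]_{LY}\in\mathcal{L}_{>}$, and $\mathcal{L}_{>}(\mathfrak g\oplus\mathfrak h,\mathfrak h)$ is a graded Lie subalgebra.

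Closure under $d_{\Pi}$ is where the ``$>$'' condition really earns its keep. Fix $\nu\in\mathcal{L}_{>}^{n}$ and expand $d_{\Pi}\nu=[\Pi,\nu]_{LY}$ into its summands (compositions of $\Pi$ and $\nu$ via $\circ$, together with the $\bullet$-terms when the bidegree is $(1,1)$). The new feature compared with the bracket case is that $\Pi$ is merely $\mathfrak g\oplus\mathfrak h$-valued; but in any summand where $\Pi$ is the outermost map, at least one of its slots is filled by a value of $\nu$ (lying in $\mathfrak h$), so the ideal property forces that summand to be $\mathfrak h$-valued, while in the remaining summands $\nu$ is outermost and $\mathfrak h$-valuedness is clear. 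On an input tuple with all entries in $\mathfrak h$, $\nu$ occurs in every summand and is always fed only arguments lying in $\mathfrak h$ — precisely because $\nu$ is required to vanish on such tuples, not merely to be $\mathfrak h$-valued — so every summand vanishes. Hence $d_{\Pi}\nu\in\mathcal{L}_{>}$, and the proof concludes as in the $3$-Lie algebra case \cite{050}.

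The main obstacle is the bookkeeping inside the differential step: one must run through the (somewhat intricate) combinatorics of $P\circ Q$ and $P\bullet Q$ and the several bidegree cases of $[\ ,\ ]_{LY}$ and check, summand by summand, both that $\Pi$-as-outer-map always does receive an $\mathfrak h$-valued entry and that $\nu$ is nested deeply enough that it only ever sees $\mathfrak h$-arguments when the overall inputs lie in $\mathfrak h$. This is exactly the computation carried out for $3$-Lie algebras in \cite{050}, with the role of the ideal played here by the block-diagonal structure maps $\chi_{\mathfrak g\oplus\mathfrak h},\omega_{\mathfrak g\oplus\mathfrak h}$; it then follows that $(\mathcal{L}_{>}(\mathfrak g\oplus\mathfrak h,\mathfrak h),[\ ,\ ]_{LY},d_{\Pi})$ is a differential graded Lie subalgebra.
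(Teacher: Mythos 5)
Your overall strategy is the right one, and it is in fact more than the paper offers: the paper gives no proof of this proposition at all, merely invoking the analogy with $3$-Lie algebras \cite{050}. Your two organizing observations --- that $\mathcal{L}_{>}(\mathfrak g\oplus\mathfrak h,\mathfrak h)$ is the space of $\mathfrak h$-valued cochains vanishing on tuples all of whose entries lie in $\mathfrak h$, and that the block-diagonal structure maps make $\mathfrak h$ an ideal of $\mathfrak g\oplus\mathfrak h$ --- are exactly the right tools, and the bracket-closure half of your argument goes through (the one summand of $P\bullet Q$ that does not fit your description, namely the bare $Q_{II}$-term discussed below, is harmless there because both $P$ and $Q$ lie in $\mathcal{L}_{>}$).

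There is, however, a concrete gap in the differential step, and it sits precisely in the $\bullet$-operation that you fold into the general ``composition'' picture. By the paper's definition, $(P\bullet Q)_{I}(x_1,x_2,x_3)$ contains the summand $\frac{1}{2}\sum_{\sigma\in S_3}\mathrm{sgn}(\sigma)\,Q_{II}(x_{\sigma(1)},x_{\sigma(2)},x_{\sigma(3)})$, in which $P$ does not appear at all; this is not ``one of $P,Q$ evaluated on a tuple containing a value of the other.'' Now take $\nu=(\nu_I,\nu_{II})\in\mathcal{C}_{>}^{1}(\mathfrak g\oplus\mathfrak h,\mathfrak h)$ and $\Pi=(\chi_{\mathfrak g\oplus\mathfrak h},\omega_{\mathfrak g\oplus\mathfrak h})$. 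Since both have degree $1$, $d_{\Pi}\nu=[\Pi,\nu]_{LY}=\Pi\bullet\nu+\nu\bullet\Pi+\Pi\circ\nu+\nu\circ\Pi$, and $(\nu\bullet\Pi)_{I}$ contributes $\frac{1}{2}\sum_{\sigma\in S_3}\mathrm{sgn}(\sigma)\,\omega_{\mathfrak g\oplus\mathfrak h}(x_{\sigma(1)},x_{\sigma(2)},x_{\sigma(3)})$. This summand involves no value of $\nu$, so neither of your two mechanisms (ideal property via an $\mathfrak h$-valued slot; vanishing because $\nu$ is fed an all-$\mathfrak h$ tuple) applies to it. Evaluated on $x_1,x_2,x_3$ in the summand $\mathfrak g$, it equals $\{x_1,x_2,x_3\}_{\mathfrak g}+\{x_2,x_3,x_1\}_{\mathfrak g}+\{x_3,x_1,x_2\}_{\mathfrak g}$, which lies in $\mathfrak g$ rather than $\mathfrak h$ and is nonzero in general. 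So your assertions that ``$\Pi$-as-outer-map always receives an $\mathfrak h$-valued entry'' and that ``$\nu$ occurs in every summand'' are both false for this term, and the argument does not establish closure of $\mathcal{L}_{>}$ under $d_{\Pi}$ on degree-$1$ cochains --- which is exactly the case needed in the Maurer--Cartan characterization that follows. To repair the proof you must isolate the bare $Q_{II}$-summand of $P\bullet Q$ and either show it is cancelled, or argue that the $\mathbb{C}$-component of the bracket is to be handled separately (as the auxiliary $(3,4)$-component is in the cohomology complex); as it stands, this term is a genuine obstruction to the argument you have written.
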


\begin{pro}
The following conditions are equivalent:
\begin{enumerate}[label=$(\roman*)$,leftmargin=15pt]
    \item $(\mathfrak g\oplus \mathfrak
h, [ \ ,  \ ]_{\chi}, \{ \ , \ , \ \}_{\omega} )$ is a Lie-Yamaguti algebra,
which is a non-abelian extension of $\mathfrak g$ by $\mathfrak h$.
    \item $\Pi=(\bar{\chi},\bar{\omega})$ is a Maurer-Cartan element of the
    differential graded Lie algebra $(C_{>}(\mathfrak g\oplus \mathfrak
h, \mathfrak h), [ \ ,  \ ]_{LY},d_{(\chi_{\mathfrak
g\oplus \mathfrak h},\omega_{\mathfrak
g\oplus \mathfrak h})})$, where
 \begin{equation*}\bar{\chi}(x+a,y+b)=\chi(x,y)+\mu(x)b-\mu(y)a,\end{equation*}
\begin{align*}\bar{\omega}(x+a,y+b,z+c)=&\omega(x,y,z)+D(x,y)c+\theta(y,z)a-\theta(x,z)b
\\&+T(z)(a,b)+\rho(x)(b,c)-\rho(y)(a,c),\end{align*}
for all $x,y,z\in \mathfrak g,a,b,c\in \mathfrak h$.
\end{enumerate}
\end{pro}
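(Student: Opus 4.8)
The plan is to recognize $[\cdot,\cdot]_{\chi}$ and $\{\cdot,\cdot,\cdot\}_{\omega}$ as the product Lie-Yamaguti structure on $\mathfrak g\oplus\mathfrak h$ perturbed by $\Pi=(\bar\chi,\bar\omega)$, and then to invoke Theorem~\ref{pro:Dga2} for the Lie-Yamaguti algebra $(\mathfrak g\oplus\mathfrak h,\chi_{\mathfrak g\oplus\mathfrak h},\omega_{\mathfrak g\oplus\mathfrak h})$ together with the fact (the preceding Proposition) that $\mathcal L_{>}(\mathfrak g\oplus\mathfrak h,\mathfrak h)$ is a differential graded Lie subalgebra of $\mathcal L^{*}(\mathfrak g\oplus\mathfrak h,\mathfrak g\oplus\mathfrak h)$. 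First I would record the elementary identity obtained by comparing~(\ref{NLts0}) and~(\ref{NLts}) with the formulas for $\chi_{\mathfrak g\oplus\mathfrak h},\omega_{\mathfrak g\oplus\mathfrak h}$ and for $\bar\chi,\bar\omega$, namely
$$[\cdot,\cdot]_{\chi}=\chi_{\mathfrak g\oplus\mathfrak h}+\bar\chi,\qquad \{\cdot,\cdot,\cdot\}_{\omega}=\omega_{\mathfrak g\oplus\mathfrak h}+\bar\omega ,$$
so that, writing $\Pi'=\Pi=(\bar\chi,\bar\omega)$, the pair $([\cdot,\cdot]_{\chi},\{\cdot,\cdot,\cdot\}_{\omega})$ is exactly $(\chi_{\mathfrak g\oplus\mathfrak h},\omega_{\mathfrak g\oplus\mathfrak h})+\Pi'$.

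The next step is to check that $\Pi'$ is a degree-$1$ element of the subalgebra $\mathcal L_{>}(\mathfrak g\oplus\mathfrak h,\mathfrak h)$; in fact $\Pi'\in\mathcal C^{1}_{>}(\mathfrak g\oplus\mathfrak h,\mathfrak h)$. Indeed $\bar\chi$ and $\bar\omega$ take values in $\mathfrak h$, because $\chi,\omega$ do, $\mu(x),\theta(y,z),D(x,y)$ map $\mathfrak h$ into $\mathfrak h$, and $\rho(x)(b,c),T(z)(a,b)\in\mathfrak h$; and they vanish whenever all their arguments are taken in $\mathfrak h$, as one sees by setting $x=y=z=0$ in the defining formulas and using multilinearity of $\chi,\omega,\mu,\theta,D,\rho,T$ in the $\mathfrak g$-variables. (The skew-symmetry of $\bar\chi,\bar\omega$ in their first two slots, needed for $\Pi'$ to be a genuine cochain, amounts to~(\ref{L00})-(\ref{L01}), which by the proof of Proposition~\ref{LY} is precisely the condition that~(\ref{eq2.1}) hold for $[\cdot,\cdot]_{\chi},\{\cdot,\cdot,\cdot\}_{\omega}$, and is therefore implicit in both (i) and (ii).) Now Theorem~\ref{pro:Dga2}, applied with $\mathfrak g$ replaced by $\mathfrak g\oplus\mathfrak h$ and $\Pi$ by $(\chi_{\mathfrak g\oplus\mathfrak h},\omega_{\mathfrak g\oplus\mathfrak h})$, says that $(\chi_{\mathfrak g\oplus\mathfrak h},\omega_{\mathfrak g\oplus\mathfrak h})+\Pi'$ is a Lie-Yamaguti structure on $\mathfrak g\oplus\mathfrak h$ if and only if $\Pi'$ is a Maurer-Cartan element of $(\mathcal L^{*}(\mathfrak g\oplus\mathfrak h,\mathfrak g\oplus\mathfrak h),[\cdot,\cdot]_{LY},d_{(\chi_{\mathfrak g\oplus\mathfrak h},\omega_{\mathfrak g\oplus\mathfrak h})})$. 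Since $\mathcal L_{>}(\mathfrak g\oplus\mathfrak h,\mathfrak h)$ is a differential graded Lie subalgebra containing $\Pi'$, the element $d_{(\chi_{\mathfrak g\oplus\mathfrak h},\omega_{\mathfrak g\oplus\mathfrak h})}\Pi'+\frac12[\Pi',\Pi']_{LY}$ is the same whether computed inside that subalgebra or in the ambient algebra; hence $\Pi'$ is Maurer-Cartan in $\mathcal L^{*}(\mathfrak g\oplus\mathfrak h,\mathfrak g\oplus\mathfrak h)$ if and only if it is Maurer-Cartan in $\mathcal L_{>}(\mathfrak g\oplus\mathfrak h,\mathfrak h)$, which is condition (ii). Finally, whenever $(\mathfrak g\oplus\mathfrak h,[\cdot,\cdot]_{\chi},\{\cdot,\cdot,\cdot\}_{\omega})$ is a Lie-Yamaguti algebra it is automatically a non-abelian extension of $\mathfrak g$ by $\mathfrak h$: by~(\ref{NLts0}) and~(\ref{NLts}) the inclusion $\mathfrak h\hookrightarrow\mathfrak g\oplus\mathfrak h$ and the projection $\mathfrak g\oplus\mathfrak h\to\mathfrak g$ are morphisms of Lie-Yamaguti algebras (the $\mathfrak h$-component never reappears in the $\mathfrak g$-component of an output), and together they form the short exact sequence $0\longrightarrow\mathfrak h\longrightarrow\mathfrak g\oplus\mathfrak h\longrightarrow\mathfrak g\longrightarrow0$. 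Concatenating these equivalences yields the equivalence of (i) and (ii).

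There is essentially no computation involved: the genuinely hard work --- packaging the Lie-Yamaguti axioms~(\ref{eq2.1})-(\ref{eq2.6}) into one Maurer-Cartan equation --- was already done in Theorem~\ref{pro:Dga2}. The only point that requires a little attention is the verification in the second step that $\Pi'=(\bar\chi,\bar\omega)$ actually lands in the subalgebra $\mathcal L_{>}(\mathfrak g\oplus\mathfrak h,\mathfrak h)$ and not merely in $\mathcal L^{*}(\mathfrak g\oplus\mathfrak h,\mathfrak g\oplus\mathfrak h)$; this is exactly what makes the Maurer-Cartan condition in the smaller complex equivalent to (i), rather than only a consequence of it.
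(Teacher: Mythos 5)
Your proof is correct and follows essentially the same route as the paper: both rest on the decomposition $([\,\cdot,\cdot\,]_{\chi},\{\,\cdot,\cdot,\cdot\,\}_{\omega})=(\chi_{\mathfrak g\oplus\mathfrak h},\omega_{\mathfrak g\oplus\mathfrak h})+(\bar\chi,\bar\omega)$ together with bilinearity of $[\,\cdot,\cdot\,]_{LY}$ and the vanishing of $[(\chi_{\mathfrak g\oplus\mathfrak h},\omega_{\mathfrak g\oplus\mathfrak h}),(\chi_{\mathfrak g\oplus\mathfrak h},\omega_{\mathfrak g\oplus\mathfrak h})]_{LY}$, the only difference being that you cite Theorem~\ref{pro:Dga2} where the paper expands the bracket inline. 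Your added checks that $(\bar\chi,\bar\omega)$ actually lies in the subcomplex $\mathcal{L}_{>}(\mathfrak g\oplus\mathfrak h,\mathfrak h)$ and that the resulting algebra is indeed an extension are points the paper leaves implicit, and they strengthen rather than change the argument.
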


\begin{proof} In view of the definition of Maurer-Cartan element,
$\Pi=(\bar{\chi},\bar{\omega})$ is a Maurer-Cartan element of the
differential graded Lie algebra $(\mathcal{L}_{>}(\mathfrak g\oplus \mathfrak
h, \mathfrak h), [ \ ,  \ ]_{LY},d_{(\chi_{\mathfrak
g\oplus \mathfrak h},\omega_{\mathfrak
g\oplus \mathfrak h})})$ if and only if
$$d_{(\chi_{\mathfrak
g\oplus \mathfrak h},\omega_{\mathfrak
g\oplus \mathfrak h})}\Pi +\frac{1}{2}[\Pi,\Pi]_{LY}=0,$$that is,
\begin{equation}\label{Mc1}[(\chi_{\mathfrak
g\oplus \mathfrak h},\omega_{\mathfrak g\oplus \mathfrak h}),(\bar{\chi},\bar{\omega})]_{LY}+\frac{1}{2}[(\bar{\chi},\bar{\omega}),(\bar{\chi},\bar{\omega})]_{LY}=0.\end{equation}
On the other hand, by Proposition \ref{pro:Dga1}, we know that $(\mathfrak g\oplus \mathfrak
h, [ \ ,  \ ]_{\chi}, \{ \ , \ , \ \}_{\omega} )$ is a Lie-Yamaguti algebra if and only if
\begin{equation}\label{Mc2}[(\chi_{\mathfrak
g\oplus \mathfrak h}+\bar{\chi},\omega_{\mathfrak g\oplus \mathfrak h}+\bar{\omega}),(\chi_{\mathfrak
g\oplus \mathfrak h}+\bar{\chi},\omega_{\mathfrak g\oplus \mathfrak h}+\bar{\omega})]_{LY}=0.
\end{equation}
Since $(\mathfrak g\oplus \mathfrak h,\chi_{\mathfrak
g\oplus \mathfrak h},\omega_{\mathfrak g\oplus \mathfrak h})$ is a Lie-Yamaguti algebra,
by computations, we have
\begin{align*}&[(\chi_{\mathfrak
g\oplus \mathfrak h}+\bar{\chi},\omega_{\mathfrak g\oplus \mathfrak h}+\bar{\omega}),(\chi_{\mathfrak
g\oplus \mathfrak h}+\bar{\chi},\omega_{\mathfrak g\oplus \mathfrak h}+\bar{\omega})]_{LY}
\\=&[(\chi_{\mathfrak
g\oplus \mathfrak h},\omega_{\mathfrak g\oplus \mathfrak h}),(\chi_{\mathfrak
g\oplus \mathfrak h},\omega_{\mathfrak g\oplus \mathfrak h})]_{LY}
+[(\bar{\chi},\bar{\omega}),(\bar{\chi},\bar{\omega})]_{LY}
+2[(\chi_{\mathfrak
g\oplus \mathfrak h},\omega_{\mathfrak g\oplus \mathfrak h}),(\bar{\chi},\bar{\omega})]_{LY}
\\=&
[(\bar{\chi},\bar{\omega}),(\bar{\chi},\bar{\omega})]_{LY}
+2[(\chi_{\mathfrak
g\oplus \mathfrak h},\omega_{\mathfrak g\oplus \mathfrak h}),(\bar{\chi},\bar{\omega})]_{LY},
\end{align*}
which yields that Eq.~(\ref{Mc1}) holds if and only if Eq.~(\ref{Mc2}) holds. This completes the proof.
\end{proof}

\begin{pro}
Two non-abelian extensions $(\mathfrak g\oplus \mathfrak h, [ \ ,  \ ]_{\chi_0},\{ \ , \ , \ \}_{\omega_0} )$ and
$(\mathfrak g\oplus \mathfrak h, [ \ ,  \ ]_{\chi},\{ \ , \ , \ \}_{\omega} )$ are equivalent if and only if
the Maurer-Cartan elements $\Pi_{0}=(\bar{\chi}_{0},\bar{\omega}_{0})$ and
 $\Pi=(\bar{\chi},\bar{\omega})$  are gauge equivalent.
\end{pro}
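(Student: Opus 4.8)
The plan is to set up a dictionary between equivalences of the two non-abelian extensions and gauge transformations relating the Maurer--Cartan elements $\Pi_0$ and $\Pi$, with the common datum a linear map $\varphi\colon\mathfrak g\to\mathfrak h$, and then to verify that one algebraic condition holds exactly when the other does.

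First I would record the shape of an equivalence. An equivalence $f\colon\mathfrak g\oplus_{(\chi_0,\omega_0)}\mathfrak h\to\mathfrak g\oplus_{(\chi,\omega)}\mathfrak h$ must satisfy $f|_{\mathfrak h}=\mathrm{id}_{\mathfrak h}$ and $\pi f=\pi$ by the defining commutative diagram, hence has the form $f(x+a)=x-\varphi(x)+a$ for a unique linear $\varphi\colon\mathfrak g\to\mathfrak h$, exactly as in the proof of Lemma~\ref{Le1}. Conversely every linear $\varphi$ yields such an $f$, automatically bijective with inverse $x+a\mapsto x+\varphi(x)+a$ and compatible with the diagram. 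Identifying $\varphi$ with the endomorphism $\hat\varphi(x+a)=\varphi(x)$ of $\mathfrak g\oplus\mathfrak h$, which satisfies $\hat\varphi^{2}=0$, one has $f=\exp(-\hat\varphi)=\mathrm{id}-\hat\varphi$; equivalently $\varphi$ is precisely a degree-zero element of the differential graded Lie algebra $(\mathcal L_{>}(\mathfrak g\oplus\mathfrak h,\mathfrak h),[\ ,\ ]_{LY},d_{(\chi_{\mathfrak g\oplus\mathfrak h},\omega_{\mathfrak g\oplus\mathfrak h})})$ introduced above.

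Next I would expand both conditions in terms of $\varphi$. Writing out ``$f$ is a homomorphism of Lie-Yamaguti algebras'', i.e.\ $f([x+a,y+b]_{\chi_0})=[f(x+a),f(y+b)]_{\chi}$ and $f(\{x+a,y+b,z+c\}_{\omega_0})=\{f(x+a),f(y+b),f(z+c)\}_{\omega}$, by means of the formulas (\ref{NLts0})--(\ref{NLts}) and substituting $f(x+a)=x-\varphi(x)+a$ produces, after separating pure $\mathfrak g$- and pure $\mathfrak h$-arguments, a system of identities of exactly the shape of (\ref{E1})--(\ref{E6}) with the structure maps $\mu,\theta,D,\rho,T$ of the extension itself in place of those carrying subscript $2$. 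On the Maurer--Cartan side, gauge equivalence of $\Pi_0=(\bar\chi_0,\bar\omega_0)$ and $\Pi=(\bar\chi,\bar\omega)$ means
\[
\Pi=e^{ad_{\varphi}}\Pi_0-\frac{e^{ad_{\varphi}}-1}{ad_{\varphi}}\,d_{(\chi_{\mathfrak g\oplus\mathfrak h},\omega_{\mathfrak g\oplus\mathfrak h})}\varphi .
\]
Because $\hat\varphi^{2}=0$, the operator $ad_{\varphi}$ is locally nilpotent on $\mathcal L_{>}(\mathfrak g\oplus\mathfrak h,\mathfrak h)$ (indeed $(ad_{\varphi})^{3}$ already annihilates the degree-one part), so the right-hand side is a finite expression: a polynomial of degree at most three in $\varphi$ assembled from $\Pi_0$, the differential $d_{(\chi_{\mathfrak g\oplus\mathfrak h},\omega_{\mathfrak g\oplus\mathfrak h})}\varphi$, and iterated $[\ ,\ ]_{LY}$-brackets with $\varphi$. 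Computing these brackets and the differential via (\ref{DLA1})--(\ref{DLA4}), the $\bullet$-products, and the definition of $d_{(\chi_{\mathfrak g\oplus\mathfrak h},\omega_{\mathfrak g\oplus\mathfrak h})}$, one checks that this identity for $\Pi$, evaluated on arguments from $\mathfrak g$ and $\mathfrak h$, coincides term-for-term with the homomorphism system above. With the dictionary in hand both implications follow at once: the extensions are equivalent iff such an $f$ exists iff the homomorphism system holds for $\varphi$ iff the displayed gauge-equivalence identity holds iff $\Pi_0$ and $\Pi$ are gauge equivalent.

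The hard part will be the last matching step: keeping track of signs and of which of $\chi,\omega,\mu,\theta,D,\rho,T$ is produced when a slot of $\Pi_0$ or of $(\chi_{\mathfrak g\oplus\mathfrak h},\omega_{\mathfrak g\oplus\mathfrak h})$ receives a pure $\mathfrak g$-argument versus a pure $\mathfrak h$-argument, and verifying that the quadratic and cubic terms in $\varphi$ coming from the exponential are precisely the $\rho$-, $T$-, and $\{\ ,\ ,\ \}_{\mathfrak h}$-corrections that appear when $f(x+a)=x-\varphi(x)+a$ is fed into (\ref{NLts0})--(\ref{NLts}). This is the same kind of bookkeeping already carried out for the cocycle identities (\ref{L00})--(\ref{L35}), so it is routine but long.
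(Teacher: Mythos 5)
Your proposal follows essentially the same route as the paper: both reduce each side to the existence of a linear map $\varphi\colon\mathfrak g\to\mathfrak h$, use the nilpotency of $ad_{\varphi}$ to truncate the gauge-transformation formula to a finite polynomial in $\varphi$ computed via the $[\ ,\ ]_{LY}$-brackets and the differential, and match the resulting identities with the system (\ref{E1})--(\ref{E6}) that characterizes equivalence of the extensions. The bookkeeping you defer as routine is exactly the explicit bracket computation the paper carries out, so the proposal is correct.
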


\begin{proof}
Let $\Pi_{0},\Pi$ be
two Maurer-Cartan elements of the differential graded Lie algebra
$(\mathcal{L}_{>}(\mathfrak g\oplus \mathfrak h, \mathfrak h), [ \ , \
]_{LY},d_{(\chi_{\mathfrak g\oplus \mathfrak h},\omega_{\mathfrak g\oplus \mathfrak h})})$. $\Pi_{0},\Pi$ are
gauge equivalent if and only if there is a linear map $\varphi\in \mathrm{Hom} (\mathfrak
g, \mathfrak h)$ such that
\begin{align*}\Pi_{0}=&e^{ad_{\varphi}}\Pi
-\frac{e^{ad_{\varphi}}-1}{ad_{\varphi}}d_{(\chi_{\mathfrak g\oplus\mathfrak h},\omega_{\mathfrak g\oplus\mathfrak h})}\varphi
\\=&(id+ad_{\varphi}+\frac{1}{2!}ad_{\varphi}^{2}+\cdot\cdot\cdot++\frac{1}{n!}ad_{\varphi}^{n}+\cdot\cdot\cdot)\Pi
\\&-(id+\frac{1}{2!}ad_{\varphi}+\frac{1}{3!}ad_{\varphi}^{2}+\cdot\cdot\cdot+\frac{1}{n!}ad_{\varphi}^{n-1}+\cdot\cdot\cdot)
d_{(\chi_{\mathfrak g\oplus\mathfrak h},\omega_{\mathfrak g\oplus\mathfrak h})}\varphi.\end{align*}
In the following, we denote by
\begin{equation*}ad_{\varphi}\Pi=[\varphi,\Pi]_{LY}=([\varphi,\Pi]_{I},[\varphi,\Pi]_{II}),~~ad_{\varphi}^{2} \Pi =[\varphi,[\varphi,\Pi]_{LY}]_{LY}=([\varphi,[\varphi,\Pi]_{LY}]_{I},[\varphi,[\varphi,\Pi]_{LY}]_{II}),\end{equation*}
\begin{equation*}d_{(\chi_{\mathfrak g\oplus \mathfrak h},\omega_{\mathfrak g\oplus \mathfrak h})}\varphi
=[(\chi_{\mathfrak g\oplus \mathfrak h},\omega_{\mathfrak g\oplus \mathfrak h}),\varphi]_{LY}
=([(\chi_{\mathfrak g\oplus \mathfrak h},\omega_{\mathfrak g\oplus \mathfrak h}),\varphi]_{I},
[(\chi_{\mathfrak g\oplus \mathfrak h},\omega_{\mathfrak g\oplus \mathfrak h}),\varphi]_{II}),\end{equation*}
and
\begin{equation*}[\varphi,d_{(\chi_{\mathfrak g\oplus \mathfrak h},\omega_{\mathfrak g\oplus \mathfrak h})}(\varphi)]_{LY}
=([\varphi,[(\chi_{\mathfrak g\oplus \mathfrak h},\omega_{\mathfrak g\oplus \mathfrak h}),\varphi]_{LY}]_{I},
[\varphi,[(\chi_{\mathfrak g\oplus \mathfrak h},\omega_{\mathfrak g\oplus \mathfrak h}),\varphi]_{LY}]_{II}).
\end{equation*}
Using Eqs.~(\ref{DLA1})-(\ref{DLA4}), for all $w_i=x_i+a_i\in \mathfrak g\oplus\mathfrak h~(i=1,2,3)$,
 we get
\begin{align*}[\varphi,\Pi]_{I}(w_1,w_2)=&\varphi\bar{\chi}(w_1,w_2)-\bar{\chi}(w_1,\varphi(x_2))
-\bar{\chi}(\varphi(x_1),w_2)
\\=&-\mu(x_1)\varphi(x_2)+\mu(x_2)\varphi(x_1),
\end{align*}
\begin{align*}[\varphi,\Pi]_{II}(w_1,w_2,w_3)=&\varphi\bar{\omega}(w_1,w_2,w_3)
-\bar{\omega}(\varphi(x_1),w_2,w_3)-\bar{\omega}(w_1,w_2,\varphi(x_3))-\bar{\omega}(w_1,\varphi(x_2),w_3)
\\=&\theta(x_1,x_3)\varphi(x_2)-T(x_3)(a_1,\varphi(x_2))-\rho(x_1)(\varphi(x_2),a_3)\\&
   -\theta(x_2,x_3)\varphi(x_1)-T(x_3)(\varphi(x_1),a_2)+\rho(x_2)(\varphi(x_1),a_3)
\\&+\rho(x_2)(a_1,\varphi(x_3))-D(x_1,x_2)\varphi(x_3)-\rho(x_1)(a_2,\varphi(x_3)),
\end{align*}
\begin{equation*}[\varphi,[\varphi,\Pi]_{LY}]_{I}(w_1,w_2)=\varphi[\varphi,\Pi]_{I}(w_1,w_2)
-[\varphi,\chi]_{I}(w_1,\varphi(x_2))-[\varphi,\Pi]_{I}(\varphi(x_1),w_2) =0,
\end{equation*}
\begin{align*}[\varphi,[\varphi,\Pi]_{LY}]_{II}(w_1,w_2,w_3)=&\varphi[\varphi,\Pi]_{II}(w_1,w_2,w_3)
-[\varphi,\Pi]_{II}(w_1,\varphi(x_2),w_3)
\\&-[\varphi,\Pi]_{II}(\varphi(x_1),w_2,w_3)
-[\varphi,\Pi]_{II}(w_1,w_2,\varphi(x_3))\\=&
2T(x_3)(\varphi(x_1),\varphi(x_2))+2\rho(x_1)(\varphi(x_2),\varphi(x_3))-2\rho(x_2)(\varphi(x_1),\varphi(x_3)),
\end{align*}
\begin{align*}[(\chi_{\mathfrak g\oplus \mathfrak h},\omega_{\mathfrak g\oplus \mathfrak h}),\varphi]_{I}(w_1,w_2)
=&\chi_{\mathfrak g\oplus \mathfrak h}(w_1,\varphi(x_2))+\chi_{\mathfrak g\oplus \mathfrak h}(\varphi(x_1),w_2)
-\varphi\chi_{\mathfrak g\oplus \mathfrak h}(w_1,w_2)
\\=&[a_1,\varphi(x_2)]_{\mathfrak h}+[\varphi(x_1),a_2]_{\mathfrak h}-\varphi([x_1,x_2]_{\mathfrak g}),
\end{align*}
\begin{align*}&[(\chi_{\mathfrak g\oplus \mathfrak h},\omega_{\mathfrak g\oplus \mathfrak h}),\varphi]_{II}(w_1,w_2,w_3)
\\=&\omega_{\mathfrak g\oplus \mathfrak h}(w_1,\varphi(x_2),w_3)
+\omega_{\mathfrak g\oplus \mathfrak h}(\varphi(x_1),w_2,w_3)
+\omega_{\mathfrak g\oplus \mathfrak h}(w_1,w_2,\varphi(x_3))-\varphi\omega_{\mathfrak g\oplus \mathfrak h}(w_1,w_2,w_3)
\\=&\{a_1,\varphi(x_2),a_3\}_{\mathfrak h}+
\{\varphi(x_1),a_2,a_3\}_{\mathfrak h}+\{a_1,a_2,\varphi(x_3)\}_{\mathfrak h}-\varphi(\{x_1,x_2,x_3\}_{\mathfrak g}),
\\&[\varphi,d_{(\chi_{\mathfrak g\oplus \mathfrak h},\omega_{\mathfrak g\oplus \mathfrak h})}(\varphi)]_{I}(w_1,w_2)
\\=&\varphi[(\chi_{\mathfrak g\oplus \mathfrak h},\omega_{\mathfrak g\oplus \mathfrak h}),\varphi]_{I}(w_1,w_2)-
[(\chi_{\mathfrak g\oplus \mathfrak h},\omega_{\mathfrak g\oplus \mathfrak h}),\varphi]_{I}(w_1,\varphi(x_2))
-[(\chi_{\mathfrak g\oplus \mathfrak h},\omega_{\mathfrak g\oplus \mathfrak h}),\varphi]_{I}(\varphi(x_1),w_2)
\\=&-2[\varphi(x_1),\varphi(x_2)]_{\mathfrak h},
\\&[\varphi,d_{(\chi_{\mathfrak g\oplus \mathfrak h},\omega_{\mathfrak g\oplus \mathfrak h})}(\varphi)]_{II}(w_1,w_2,w_3)
\\=&\varphi[(\chi_{\mathfrak g\oplus \mathfrak h},\omega_{\mathfrak g\oplus \mathfrak h}),\varphi]_{II}(w_1,w_2,w_3)
-[(\chi_{\mathfrak g\oplus \mathfrak h},\omega_{\mathfrak g\oplus \mathfrak h}),\varphi]_{II}(w_1,\varphi(x_2),w_3)
\\&-[(\chi_{\mathfrak g\oplus \mathfrak h},\omega_{\mathfrak g\oplus \mathfrak h}),\varphi]_{II}(\varphi(x_1),w_2,w_3)
-[(\chi_{\mathfrak g\oplus \mathfrak h},\omega_{\mathfrak g\oplus \mathfrak h}),\varphi]_{II}(w_1,w_2,\varphi(x_3))
\\=&-\{\varphi(x_1),\varphi(x_2),a_3\}_{\mathfrak h}-\{a_1,\varphi(x_2),\varphi(x_3)\}_{\mathfrak h}
-\{\varphi(x_1),\varphi(x_2),a_3\}_{\mathfrak h}\\&-\{\varphi(x_1),a_2,\varphi(x_3)\}_{\mathfrak h}
-\{a_1,\varphi(x_2),\varphi(x_3)\}_{\mathfrak h}-\{\varphi(x_1),a_2,\varphi(x_3)\}_{\mathfrak h}
\\=&-2\{\varphi(x_1),\varphi(x_2),a_3\}_{\mathfrak h}-2\{a_1,\varphi(x_2),\varphi(x_3)\}_{\mathfrak h}
-2\{\varphi(x_1),a_2,\varphi(x_3)\}_{\mathfrak h},
\\&[\varphi,[\varphi,d_{(\chi_{\mathfrak g\oplus \mathfrak h},\omega_{\mathfrak g\oplus \mathfrak h})}(\varphi)]_{LY}]_{I}(w_1,w_2)
\\=&\varphi[\varphi,d_{(\chi_{\mathfrak g\oplus \mathfrak h},\omega_{\mathfrak g\oplus \mathfrak h})}(\varphi)]_{I}(w_1,w_2)
-[\varphi,d_{(\chi_{\mathfrak g\oplus \mathfrak h},\omega_{\mathfrak g\oplus \mathfrak h})}(\varphi)]_{I}(x_1+a_1,\varphi(x_2))
-[\varphi,d_{(\chi_{\mathfrak g\oplus \mathfrak h},\omega_{\mathfrak g\oplus \mathfrak h})}(\varphi)]_{I}(\varphi(x_1),w_2)
\\=&0,
\\&[\varphi,[\varphi,d_{(\chi_{\mathfrak g\oplus \mathfrak h},
\omega_{\mathfrak g\oplus \mathfrak h})}(\varphi)]_{LY}]_{II}(w_1,w_2,w_3)
\\=&[\varphi,d_{(\chi_{\mathfrak g\oplus \mathfrak h},\omega_{\mathfrak g\oplus \mathfrak h})}(\varphi)]_{II}(x_1+a_1,\varphi(x_2),w_3)
+[\varphi,d_{(\chi_{\mathfrak g\oplus \mathfrak h},\omega_{\mathfrak g\oplus \mathfrak h})}(\varphi)]_{II}(\varphi(x_1),w_2,w_3)
\\&+[\varphi,d_{(\chi_{\mathfrak g\oplus \mathfrak h},\omega_{\mathfrak g\oplus \mathfrak h})}(\varphi)]_{II}(w_1,w_2,\varphi(x_3))
-\varphi[\varphi,d_{(\chi_{\mathfrak g\oplus \mathfrak h},\omega_{\mathfrak g\oplus \mathfrak h})}(\varphi)]_{II}(w_1,w_2,w_3)
\\=&6\{\varphi(x_1),\varphi(x_2),\varphi(x_3)\}_{\mathfrak h},
\end{align*}
and
$$ad_{\varphi}^{n}\Pi=0,~~ad_{\varphi}^{n}(d_{(\chi_{\mathfrak g\oplus \mathfrak h},\omega_{\mathfrak g\oplus \mathfrak h})}\Pi)=0,~~\forall~~n\geq 3.$$
Thus, $\Pi$ and $\Pi_{0}$ are gauge equivalent Maurer-Cartan elements if and only if
\begin{equation}\label{GMC1}\chi_{0}=\chi+[\varphi,\Pi]_{I}
-d_{(\chi_{\mathfrak g\oplus \mathfrak h},\omega_{\mathfrak g\oplus \mathfrak h})}\varphi
-\frac{1}{2!}[\varphi,d_{(\chi_{\mathfrak g\oplus \mathfrak h},\omega_{\mathfrak g\oplus \mathfrak h})}\varphi]_{I},
\end{equation}
\begin{align}\omega_{0}\nonumber=&\omega+[\varphi,\Pi]_{II}+\frac{1}{2!}[\varphi,[\varphi,\Pi]_{LY}]_{II}
-d_{(\chi_{\mathfrak g\oplus \mathfrak h},\omega_{\mathfrak g\oplus \mathfrak h})}\varphi
\\&\label{GMC2}-\frac{1}{2!}[\varphi,d_{(\chi_{\mathfrak g\oplus \mathfrak h},\omega_{\mathfrak g\oplus \mathfrak h})}\varphi]_{II}
-\frac{1}{3!}[\varphi,[\varphi,d_{(\chi_{\mathfrak g\oplus \mathfrak h},\omega_{\mathfrak g\oplus \mathfrak h})}\varphi]_{LY}]_{II}.
\end{align}
Therefore, Eqs.~ (\ref{GMC1}) and (\ref{GMC2}) hold if and
only if Eqs.~(\ref{E1})-(\ref{E6}) hold. This finishes the proof.

\end{proof}

\section{Extensibility of a pair of Lie-Yamaguti algebra automorphisms}
In this section, we study extensibility of pairs of Lie-Yamaguti algebra
automorphisms and characterize them by equivalent conditions.

Let $\mathcal{E}:0\longrightarrow\mathfrak h\stackrel{i}{\longrightarrow} \hat{\mathfrak g}\stackrel{p}{\longrightarrow}\mathfrak g\longrightarrow0$
  be a non-abelian extension of $\mathfrak g$ by $\mathfrak h$ with a section $s$ of $p$.
Denote  $\mathrm{Aut}_{\mathfrak h}(\hat{\mathfrak g})=\{\gamma\in \mathrm{Aut} (\hat{\mathfrak g})\mid \gamma(\mathfrak h)=\mathfrak h\}.$

\begin{defi} A pair of automorphisms $(\alpha,\beta)\in \mathrm{Aut} (\mathfrak g)\times \mathrm{Aut} (\mathfrak h)$
is said to be extensible with respect to a non-abelian extension
 $$\mathcal{E}:0\longrightarrow \mathfrak h\stackrel{i}{\longrightarrow} \hat{\mathfrak g}\stackrel{p}{\longrightarrow}\mathfrak g\longrightarrow0$$
if there is an automorphism $\gamma\in \mathrm{Aut}_{\mathfrak h}
(\hat{\mathfrak g})$ such that $i\beta=\gamma i,~p\gamma=\alpha p$, that is, the following commutative diagram holds:
\[\xymatrix@C=20pt@R=20pt{0\ar[r]&\mathfrak h\ar[d]_\beta\ar[r]^i&\hat{\mathfrak g}\ar[d]_\gamma\ar[r]^p&\mathfrak g
\ar[d]_\alpha\ar[r]&0\\0\ar[r]&\mathfrak h\ar[r]^i&\hat{\mathfrak g}\ar[r]^p&\mathfrak g\ar[r]&0.}\]
\end{defi}
It is natural to ask: when is a pair of automorphisms $(\alpha,\beta)\in \mathrm{Aut} (\mathfrak g)\times \mathrm{Aut} (\mathfrak h)$
extensible? We discuss this problem in the following.

\begin{thm} \label{EC} Let $0\longrightarrow\mathfrak h\stackrel{i}{\longrightarrow}
\hat{\mathfrak g}\stackrel{p}{\longrightarrow}\mathfrak
g\longrightarrow0$ be a non-abelian extension of $\mathfrak g$ by
$\mathfrak h$ with a section $s$ of $p$ and
$(\chi,\omega,\mu,\theta,D,\rho,T)$ the corresponding non-abelian (2,3)-cocycle
induced by $s$. A pair $(\alpha,\beta)\in \mathrm{Aut}(\mathfrak
g)\times \mathrm{Aut}(\mathfrak h)$ is extensible if and only if there is a
linear map $\varphi:\mathfrak g\longrightarrow \mathfrak h$
satisfying the following conditions:
\begin{align}\label{Iam1}
     &\nonumber\beta\omega(x,y,z)-\omega(\alpha(x),\alpha(y),\alpha(z))=T(\alpha(z))(\varphi(x),\varphi(y))-\rho(\alpha(y))(\varphi(x),\varphi(z))
-\theta(\alpha(y),\alpha(z))\varphi(x)\\+&\rho(\alpha(x))(\varphi(y),\varphi(z))+\theta(\alpha(x),\alpha(z))\varphi(y)
-D(\alpha(x),\alpha(y))\varphi(z)+\varphi(\{x,y,z\}_{\mathfrak
     h})-\{\varphi(x),\varphi(y),\varphi(z)\}_{\mathfrak h},
\end{align}
\begin{equation}\label{Iam2}
     \beta \chi(x,y)-\chi(\alpha(x),\alpha(y))=[\varphi(x),\varphi(y)]_{\mathfrak h}+\varphi([x,y]_{\mathfrak g})
     -\mu(\alpha(x))\varphi(y)+\mu(\alpha(y))\varphi(x),
\end{equation}
\begin{equation}\label{Iam3}
     \beta(\theta(x,y)a)-\theta(\alpha(x),\alpha(y))\beta(a)=\{\beta(a),\varphi(x),\varphi(y)\}_{\mathfrak
     h}-T(\alpha(y))(\beta(a),\varphi(x))+\rho(\alpha(x))(\beta(a),\varphi(y)),
\end{equation}
\begin{equation}\label{Iam4}
     \beta D(x,y)a-D(\alpha(x),\alpha(y))\beta(a)=\{\varphi(x),\varphi(y),\beta(a)\}_{\mathfrak
     h}-\rho(\alpha(x))(\varphi(y),\beta(a))+\rho(\alpha(y))(\varphi(x),\beta(a)),
\end{equation}
\begin{equation}\label{Iam5}
     \beta(\rho(x)(a,b))-\rho(\alpha(x))(\beta(a),\beta(b))=\{\beta(a),\varphi(x),\beta(b)\}_{\mathfrak h},
\end{equation}
\begin{equation}\label{Iam6}
     \beta T(x)(a,b)-T(\alpha(x))(\beta(a),\beta(b))=\{\beta(b),\beta(a),\varphi(x)\}_{\mathfrak h},
\end{equation}
\begin{equation}\label{Iam7}
     \beta \mu(x)a-\mu(\alpha(x))\beta(a)=[\beta(a),\varphi(x)]_{\mathfrak h},
\end{equation}
for all $x,y,z\in \mathfrak g$ and $a\in \mathfrak h$.
\end{thm}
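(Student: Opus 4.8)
The plan is to prove both implications through the linear map $\varphi$ that records the discrepancy between $\gamma\circ s$ and $s\circ\alpha$. Throughout I identify $\mathfrak h$ with $i(\mathfrak h)\subseteq\hat{\mathfrak g}$, so that every element of $\hat{\mathfrak g}$ is uniquely of the form $s(x)+a$ with $x\in\mathfrak g$, $a\in\mathfrak h$, and the bracket and ternary product of $\hat{\mathfrak g}$ are computed from the septuple $(\chi,\omega,\mu,\theta,D,\rho,T)$ by Eqs.~\eqref{NLts0}--\eqref{NLts} (this is the content of Propositions~\ref{LY} and~\ref{CY}).

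For the ``if'' direction, starting from a linear map $\varphi$ satisfying Eqs.~\eqref{Iam1}--\eqref{Iam7}, I would define $\gamma\colon\hat{\mathfrak g}\to\hat{\mathfrak g}$ by $\gamma(s(x)+a)=s(\alpha(x))+\beta(a)-\varphi(x)$. This is well defined by uniqueness of the decomposition, it sends $\mathfrak h$ to $\mathfrak h$ and satisfies $\gamma i=i\beta$, $p\gamma=\alpha p$ by inspection, and it is bijective because it restricts to the bijection $\beta$ on $\mathfrak h$ and descends to the bijection $\alpha$ on $\hat{\mathfrak g}/\mathfrak h\cong\mathfrak g$. The one substantive point is that $\gamma$ is a homomorphism of Lie--Yamaguti algebras. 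Since the operations of $\hat{\mathfrak g}$ are (multi)linear, it suffices to verify $\gamma[u,v]_{\hat{\mathfrak g}}=[\gamma u,\gamma v]_{\hat{\mathfrak g}}$ and $\gamma\{u,v,w\}_{\hat{\mathfrak g}}=\{\gamma u,\gamma v,\gamma w\}_{\hat{\mathfrak g}}$ when each argument is either $s(x)$ for some $x\in\mathfrak g$ or an element of $\mathfrak h$; and, using the antisymmetry of Eq.~\eqref{eq2.1} in the first two slots, every such mixed argument can be rearranged so that the section entries come first. This leaves the bilinear cases $(s(x),s(y))$, $(s(x),a)$, $(a,b)$ and the ternary cases $(s(x),s(y),s(z))$, $(a,s(x),s(y))$, $(s(x),s(y),a)$, $(s(x),a,b)$, $(a,b,s(x))$, $(a,b,c)$. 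Expanding both sides of each case via Eqs.~\eqref{C0}--\eqref{C3} and \eqref{NLts0}--\eqref{NLts} and substituting $\gamma s(x)=s(\alpha x)-\varphi(x)$ and $\gamma|_{\mathfrak h}=\beta$, one finds that the case $(s(x),s(y))$ is precisely Eq.~\eqref{Iam2}, the case $(s(x),a)$ is Eq.~\eqref{Iam7}, the case $(s(x),s(y),s(z))$ is Eq.~\eqref{Iam1}, and the cases $(a,s(x),s(y))$, $(s(x),s(y),a)$, $(s(x),a,b)$, $(a,b,s(x))$ are Eqs.~\eqref{Iam3}--\eqref{Iam6} respectively, while the purely $\mathfrak h$-valued cases $(a,b)$ and $(a,b,c)$ hold automatically because $\beta\in\mathrm{Aut}(\mathfrak h)$. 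Hence $\gamma\in\mathrm{Aut}_{\mathfrak h}(\hat{\mathfrak g})$ realizes $(\alpha,\beta)$, so $(\alpha,\beta)$ is extensible.

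For the ``only if'' direction, assume $\gamma\in\mathrm{Aut}_{\mathfrak h}(\hat{\mathfrak g})$ with $i\beta=\gamma i$ and $p\gamma=\alpha p$. Then $p\bigl(s\alpha(x)-\gamma s(x)\bigr)=\alpha(x)-\alpha p s(x)=0$, so $s\alpha(x)-\gamma s(x)\in\mathfrak h$, and I would set $\varphi(x):=s\alpha(x)-\gamma s(x)$, i.e.\ $\gamma s(x)=s\alpha(x)-\varphi(x)$. Applying the homomorphism $\gamma$ to the defining identities \eqref{C0}--\eqref{C3} of the induced cocycle, using $\gamma|_{\mathfrak h}=\beta$ together with the displayed formula for $\gamma s(x)$, and re-expanding via Eqs.~\eqref{NLts0}--\eqref{NLts}, recovers Eqs.~\eqref{Iam1}--\eqref{Iam7} term by term; for instance $\beta\mu(x)a=\gamma[s(x),a]_{\hat{\mathfrak g}}=[s\alpha(x)-\varphi(x),\beta(a)]_{\hat{\mathfrak g}}=\mu(\alpha x)\beta(a)+[\beta(a),\varphi(x)]_{\mathfrak h}$, which is Eq.~\eqref{Iam7}, while Eqs.~\eqref{C0} and \eqref{C1} yield Eqs.~\eqref{Iam2} and \eqref{Iam1}, and Eqs.~\eqref{C2}--\eqref{C3} yield Eqs.~\eqref{Iam3}--\eqref{Iam6}. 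This produces the desired $\varphi$.

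The computational core is the case-by-case expansion above; the only points needing care are fixing the sign in the definition of $\varphi$ (so that, for instance, Eq.~\eqref{Iam7} comes out with $[\beta(a),\varphi(x)]_{\mathfrak h}$ rather than its negative) and the systematic use of the antisymmetry \eqref{eq2.1} to reduce every mixed triple to one in which the section entries precede the $\mathfrak h$-entries, so that only the components $\mu,\theta,D,\rho,T$ of the $\hat{\mathfrak g}$-structure enter the computation and no genuinely ``mixed'' component has to be analyzed separately. I anticipate no conceptual obstacle beyond this bookkeeping and the observation that multilinearity makes verification on the generating arguments sufficient.
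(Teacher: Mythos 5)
Your proposal is correct and follows essentially the same route as the paper: the same $\varphi(x)=s\alpha(x)-\gamma s(x)$ in the forward direction, the same formula $\gamma(s(x)+a)=s\alpha(x)+\beta(a)-\varphi(x)$ in the converse, and the same verification that the homomorphism property of $\gamma$, expanded through the induced cocycle, is term-by-term equivalent to Eqs.~\eqref{Iam1}--\eqref{Iam7}. Your organization of the check into generating cases via multilinearity and the antisymmetry of the first two slots is just a cleaner bookkeeping of the paper's direct expansion.
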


\begin{proof} Assume that $(\alpha,\beta)\in \mathrm{Aut}(\mathfrak
g)\times \mathrm{Aut}(\mathfrak h)$ is extensible, that is, there is an
automorphism $\gamma\in \mathrm{Aut}_{\mathfrak h}(\hat{\mathfrak g})$ such
that $\gamma i=i\beta$ and $p\gamma =\alpha p$. Due to $s$ being a section of $p$,
for all
$x\in \mathfrak g$,
$$p(s\alpha-\gamma s)(x)=\alpha(x)-\alpha(x)=0,$$
which implies that $(s\alpha-\gamma s)(x)\in \mathrm{ker}p=\mathfrak h$.
So we can define a linear map $\varphi:\mathfrak g\longrightarrow
\mathfrak h$ by
$$\varphi(x)=(s\alpha-\gamma s)(x),~~\forall~x\in \mathfrak g.$$
 Using Eqs.~(\ref{C2}) and (\ref{C3}), for $x,y\in \mathfrak g, a\in \mathfrak h$, we get
\begin{eqnarray*}&&
      \beta(\theta(x,y)a)-\theta(\alpha(x),\alpha(y))\beta(a)
      \\&=&\beta\{a,s(x),s(y)\}_{\hat{\mathfrak g}}-\{\beta(a),s\alpha(x),s\alpha(y)\}_{\hat{\mathfrak g}}
      \\&=&\{\beta(a),\beta s(x),\beta s(y)\}_{\hat{\mathfrak g}}-\{\beta(a),s\alpha(x),s\alpha(y)\}_{\hat{\mathfrak g}}
      \\&=&\{\beta(a),\beta s(x)-s\alpha(x),\beta s(y)\}_{\hat{\mathfrak
      g}}+\{\beta(a),s\alpha(x),\beta s(y)\}_{\hat{\mathfrak g}}-\{\beta(a),s\alpha(x),s\alpha(y)\}_{\hat{\mathfrak g}}
      \\&=&\{\beta(a),-\varphi(x),\beta s(y)\}_{\hat{\mathfrak
      g}}+\{\beta(a),s\alpha(x),-\varphi(y)\}_{\hat{\mathfrak g}}
\\&=&-\{\beta(a),\varphi(x),\beta s(y)-s\alpha(y)\}_{\hat{\mathfrak
      g}}-\{\beta(a),\varphi(x),s\alpha(y)\}_{\hat{\mathfrak
      g}}- \{\beta(a),s\alpha(x),\varphi(y)\}_{\hat{\mathfrak g}}
\\&=&\{\beta(a),\varphi(x),\varphi(y)\}_{\hat{\mathfrak
      g}}-\{\beta(a),\varphi(x),s\alpha(y)\}_{\hat{\mathfrak
      g}}+ \{s\alpha(x),\beta(a),\varphi(y)\}_{\hat{\mathfrak g}}
\\&=&\{\beta(a),\varphi(x),\varphi(y)\}_{\mathfrak
     h}-T(\alpha(y))(\beta(a),\varphi(x))+\rho(\alpha(x))(\beta(a),\varphi(y)),
\end{eqnarray*}
which indicates that Eq.~(\ref{Iam3}) holds. Take the same procedure, we can prove that Eqs.~(\ref{Iam1}), (\ref{Iam2}) and
(\ref{Iam4})-(\ref{Iam7}) hold.

Conversely, suppose that $(\alpha,\beta)\in \mathrm{Aut}(\mathfrak g)\times
\mathrm{Aut}(\mathfrak h)$ and there is a linear map $\varphi:\mathfrak
g\longrightarrow \mathfrak h$ satisfying Eqs. (\ref{Iam1})-(\ref{Iam7}). Since $s$ is a section of $p$,
all $\hat{w}\in \hat{\mathfrak g}$ can be written as
$\hat{w}=a+s(x)$ for some $a\in \mathfrak h,x\in \mathfrak g.$
Define a linear map $\gamma:\hat{\mathfrak g}\longrightarrow
\hat{\mathfrak g}$ by
$$\gamma(\hat{w})=\gamma(a+s(x))=\beta(a)-\varphi(x)+s\alpha(x).$$
It is easy to check that $i\beta=\gamma i,~p\gamma=\alpha p$ and $\gamma(\mathfrak h)=\mathfrak h$.
In the sequel, firstly, we prove that $\gamma$ is bijective.
Indeed if $\gamma(\hat{w})=0,$ we have $s\alpha(x)=0$ and
$\beta(a)-\varphi(x)=0$. In view of $s$ and $\alpha$ being
injective, we get $x=0$, which follows that $a=0$. Thus,
$\hat{w}=a+s(x)=0$, that is $\gamma$ is injective. For any
$\hat{w}=a+s(x)\in \hat{\mathfrak g}$,
$$\gamma(\beta^{-1}(a)+\beta^{-1}\varphi\alpha^{-1}(x)+s\alpha^{-1}(x))=a+s(x)=\hat{w},$$
which yields that $\gamma$ is surjective. In all, $\gamma$ is
bijective.

Secondly, we show that $\gamma$ is a homomorphism of the Lie-Yamaguti algebra $\hat{\mathfrak
g}$. In fact, for all $\hat{w}_i=a_i+s(x_i)\in \hat{\mathfrak
g}~(i=1,2,3)$,
\begin{eqnarray*}&&
      \{\gamma(\hat{w}_1),\gamma(\hat{w}_2),\gamma(\hat{w}_3)\}_{\hat{\mathfrak g}}
\\&=&\{\beta(a_1)-\varphi(x_1)+s\alpha(x_1),\beta(a_2)-\varphi(x_2)+s\alpha(x_2),\beta(a_3)-\varphi(x_3)+s\alpha(x_3)\}_{\hat{\mathfrak g}}
\\&=&\{\beta(a_1),\beta(a_2),\beta(a_3)\}_{\hat{\mathfrak g}}-\{\beta(a_1),\beta(a_2),\varphi(x_3)\}_{\hat{\mathfrak
g}}+\{\beta(a_1),\beta(a_2),s\alpha(x_3)\}_{\hat{\mathfrak g}}
\\&&-\{\beta(a_1),\varphi(x_2),\beta(a_3)\}_{\hat{\mathfrak
g}}+\{\beta(a_1),\varphi(x_2),\varphi(x_3)\}_{\hat{\mathfrak
g}}-\{\beta(a_1),\varphi(x_2),s\alpha(x_3)\}_{\hat{\mathfrak g}}
\\&&+\{\beta(a_1),s\alpha(x_2),\beta(a_3)\}_{\hat{\mathfrak
g}}-\{\beta(a_1),s\alpha(x_2),\varphi(x_3)\}_{\hat{\mathfrak
g}}+\{\beta(a_1),s\alpha(x_2),s\alpha(x_3)\}_{\hat{\mathfrak g}}
\\&&-\{\varphi(x_1),\beta(a_2),\beta(a_3)\}_{\hat{\mathfrak
g}}+\{\varphi(x_1),\beta(a_2),\varphi(x_3)\}_{\hat{\mathfrak
g}}-\{\varphi(x_1),\beta(a_2),s\alpha(x_3)\}_{\hat{\mathfrak g}}
\\&&+\{\varphi(x_1),\varphi(x_2),\beta(a_3)\}_{\hat{\mathfrak
g}}-\{\varphi(x_1),\varphi(x_2),\varphi(x_3)\}_{\hat{\mathfrak
g}}+\{\varphi(x_1),\varphi(x_2),s\alpha(x_3)\}_{\hat{\mathfrak g}}
\\&&-\{\varphi(x_1),s\alpha(x_2),\beta(a_3)\}_{\hat{\mathfrak
g}}+\{\varphi(x_1),s\alpha(x_2),\varphi(x_3)\}_{\hat{\mathfrak
g}}-\{\varphi(x_1),s\alpha(x_2),s\alpha(x_3)\}_{\hat{\mathfrak g}}
\\&&+\{s\alpha(x_1),\beta(a_2),\beta(a_3)\}_{\hat{\mathfrak
g}}-\{s\alpha(x_1),\beta(a_2),\varphi(x_3)\}_{\hat{\mathfrak
g}}+\{s\alpha(x_1),\beta(a_2),s\alpha(x_3)\}_{\hat{\mathfrak g}}
\\&&-\{s\alpha(x_1),\varphi(x_2),\beta(a_3)\}_{\hat{\mathfrak
g}}+\{s\alpha(x_1),\varphi(x_2),\varphi(x_3)\}_{\hat{\mathfrak
g}}-\{s\alpha(x_1),\varphi(x_2),s\alpha(x_3)\}_{\hat{\mathfrak g}}
\\&&+\{s\alpha(x_1),s\alpha(x_2),\beta(a_3)\}_{\hat{\mathfrak
g}}-\{s\alpha(x_1),s\alpha(x_2),\varphi(x_3)\}_{\hat{\mathfrak
g}}+\{s\alpha(x_1),s\alpha(x_2),s\alpha(x_3)\}_{\hat{\mathfrak g}}
\\&=&\{\beta(a_1),\beta(a_2),\beta(a_3)\}_{\hat{\mathfrak
g}}-\{\beta(a_1),\beta(a_2),\varphi(x_3)\}_{\hat{\mathfrak
g}}+T(\alpha(x_3))(\beta(a_1),\beta(a_2))
\\&&-\{\beta(a_1),\varphi(x_2),\beta(a_3)\}_{\hat{\mathfrak
g}}+\{\beta(a_1),\varphi(x_2),\varphi(x_3)\}_{\hat{\mathfrak
g}}-T(\alpha(x_3))(\beta(a_1),\varphi(x_2))
\\&&-\rho(\alpha(x_2))(\beta(a_1),\beta(a_3))+\rho(\alpha(x_2))
(\beta(a_1),\varphi(x_3))+\theta(\alpha(x_2),\alpha(x_3))\beta(a_1)
\\&&-\{\varphi(x_1),\beta(a_2),\beta(a_3)\}_{\hat{\mathfrak
g}}+\{\varphi(x_1),\beta(a_2),\varphi(x_3)\}_{\hat{\mathfrak
g}}-T(\alpha(x_3))(\varphi(x_1),\beta(a_2))
\\&&+\{\varphi(x_1),\varphi(x_2),\beta(a_3)\}_{\hat{\mathfrak
g}}-\{\varphi(x_1),\varphi(x_2),\varphi(x_3)\}_{\hat{\mathfrak
g}}+T(\alpha(x_3))(\varphi(x_1),\varphi(x_2))
\\&&+\rho(\alpha(x_2))(\varphi(x_1),\beta(a_3))-\rho(\alpha(x_2))(\varphi(x_1),\varphi(x_3))
-\theta(\alpha(x_2),\alpha(x_3))\varphi(x_1)
\\&&+\rho(\alpha(x_1))(\beta(a_2),\beta(a_3))-\rho(\alpha(x_1))(\beta(a_2),\varphi(x_3))
-\theta(\alpha(x_1),\alpha(x_3))\beta(a_2)
\\&&-\rho(\alpha(x_1))(\varphi(x_2),\beta(a_3))+\rho(\alpha(x_1))(\varphi(x_2),\varphi(x_3))
+\theta(\alpha(x_1),\alpha(x_3))\varphi(x_2)
\\&&+D(\alpha(x_1),\alpha(x_2))\beta(a_3)-D(\alpha(x_1),\alpha(x_2))\varphi(x_3)
+\omega(\alpha(x_1),\alpha(x_2),\alpha(x_3))+s\alpha\{x_1,x_2,x_3\}_{\mathfrak g}
\end{eqnarray*}
and
\begin{eqnarray*}&&
      \gamma(\{\hat{w}_1,\hat{w}_2,\hat{w}_3\}_{\hat{\mathfrak g}})
\\&=&\gamma(\{a_1,a_2,a_3\}_{\hat{\mathfrak g}}+\{a_1,a_2,s(x_3)\}_{\hat{\mathfrak g}}+\{a_1,s(x_2),a_3\}_{\hat{\mathfrak g}}
+\{a_1,s(x_2),s(x_3)\}_{\hat{\mathfrak
g}}\\&&+\{s(x_1),a_2,s(x_3)\}_{\hat{\mathfrak
g}}+\{s(x_1),s(x_2),a_3\}_{\hat{\mathfrak
g}}+[s(x_1),a_2,a_3]_{\hat{\mathfrak g}}+\omega(x_1,x_2,x_3)+s\{x_1,x_2,x_3\}_{\mathfrak g})
\\&=&\{\beta(a_1),\beta(a_2),\beta(a_3)\}_{\hat{\mathfrak g}}+\beta( T(x_3)(a_1,a_2)-\rho(x_2)
(a_1,a_3) +\theta(x_2,x_3)a_1
-\theta(x_1,x_3)a_2
\\&&+D(x_1,x_2)a_3+\rho(x_1)(a_2,a_3))+\beta(\{ s(x_1), s(x_2),
s(x_3)\}_{\hat{\mathfrak g}})+\beta \omega(x_1,x_2,x_3)\\&&+s\alpha \{x_1,x_2,x_3\}_{\mathfrak g}-\varphi(\{x_1,x_2,x_3\}_{\mathfrak g}).
\end{eqnarray*}
Thanks to Eqs.~~(\ref{Iam1})-(\ref{Iam5}), we have
$$ \gamma(\{\hat{w}_1,\hat{w}_2,\hat{w}_3\}_{\hat{\mathfrak
g}})=\{\gamma(\hat{w}_1),\gamma(\hat{w}_2),\gamma(\hat{w}_3)\}_{\hat{\mathfrak
g}}.$$ By the same token, $ \gamma([\hat{w}_1,\hat{w}_2]_{\hat{\mathfrak
g}})=[\gamma(\hat{w}_1),\gamma(\hat{w}_2)]_{\hat{\mathfrak
g}}.$ Hence, $\gamma\in \mathrm{Aut}_{\mathfrak h}(\hat{\mathfrak g})$. This completes the proof.
\end{proof}

Let $\mathcal{E}:0\longrightarrow\mathfrak h\stackrel{i}{\longrightarrow}
\hat{\mathfrak g}\stackrel{p}{\longrightarrow}\mathfrak
g\longrightarrow0$ be a non-abelian extension of
$\mathfrak g$ by $\mathfrak h$ with a section $s$ of $p$ and $(\chi,\omega,\mu,\theta,D,\rho,T)$ be the
corresponding non-abelian (2,3)-cocycle induced by $s$.
For all $(\alpha,\beta)\in \mathrm{Aut}(\mathfrak
g)\times \mathrm{Aut}(\mathfrak h)$, define maps $\chi_{(\alpha,\beta)}:\mathfrak g\otimes
\mathfrak g\longrightarrow \mathfrak
h,~\omega_{(\alpha,\beta)}:\mathfrak g\otimes
\mathfrak g\otimes\mathfrak g\longrightarrow \mathfrak
h,~\mu_{(\alpha,\beta)}:\mathfrak g\longrightarrow \mathfrak{gl}(\mathfrak h),~\theta_{(\alpha,\beta)},
D_{(\alpha,\beta)}:\mathfrak g\wedge \mathfrak
g\longrightarrow \mathfrak{gl}(\mathfrak h),~\rho_{(\alpha,\beta)},T_{(\alpha,\beta)}:\mathfrak
g\longrightarrow \mathrm{Hom} (\mathfrak h\wedge \mathfrak h,\mathfrak h)$
 respectively by
 \begin{equation}\label{Inc1}\omega_{(\alpha,\beta)}(x,y,z)=\beta\omega(\alpha^{-1}(x),\alpha^{-1}(y),\alpha^{-1}(z)),~~
 \chi_{(\alpha,\beta)}(x,y)=\beta\chi(\alpha^{-1}(x),\alpha^{-1}(y)),\end{equation}
 \begin{equation}\label{Inc3}\theta_{(\alpha,\beta)}(x,y)a=\beta(\theta(\alpha^{-1}(x),\alpha^{-1}(y))\beta^{-1}(a))
,~~D_{(\alpha,\beta)}(x,y)a=\beta D(\alpha^{-1}(x),\alpha^{-1}(y))\beta^{-1}(a),\end{equation}
 \begin{equation}\label{Inc5}\rho_{(\alpha,\beta)}(x)(a,b)=\beta\rho(\alpha^{-1}(x))(\beta^{-1}(a),\beta^{-1}(b)),~~
T_{(\alpha,\beta)}(x)(a,b)=\beta T(\alpha^{-1}(x))(\beta^{-1}(a),\beta^{-1}(b)),\end{equation}
\begin{equation}\label{Inc4}\mu_{(\alpha,\beta)}(x)a=\beta\mu(\alpha^{-1}(x))\beta^{-1}(a),\end{equation}
for all $x,y,z\in \mathfrak g,a,b\in \mathfrak h.$

\begin{pro} With the above notations,
$(\chi_{(\alpha,\beta)},\omega_{(\alpha,\beta)},\mu_{(\alpha,\beta)},\theta_{(\alpha,\beta)},D_{(\alpha,\beta)},\rho_{(\alpha,\beta)},T_{(\alpha,\beta)})$
is a non-abelian (2,3)-cocycle. \end{pro}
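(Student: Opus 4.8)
The plan is to deduce the statement from Proposition~\ref{LY} by transport of structure, thereby avoiding a direct check of the long list of identities (\ref{L00})--(\ref{L35}) for the twisted septuple. First I would recall that, by Proposition~\ref{CY}, the septuple $(\chi,\omega,\mu,\theta,D,\rho,T)$ induced by the section $s$ is itself a non-abelian $(2,3)$-cocycle, so by Proposition~\ref{LY} the operations $[\ ,\ ]_{\chi}$ and $\{\ ,\ ,\ \}_{\omega}$ of (\ref{NLts0})--(\ref{NLts}) make $\mathfrak g\oplus\mathfrak h$ into the Lie-Yamaguti algebra $\mathfrak g\oplus_{(\chi,\omega)}\mathfrak h$.

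Next I would introduce the linear bijection $\Phi\colon\mathfrak g\oplus\mathfrak h\to\mathfrak g\oplus\mathfrak h$, $\Phi(x+a)=\alpha(x)+\beta(a)$ (bijective since $\alpha\in\mathrm{Aut}(\mathfrak g)$ and $\beta\in\mathrm{Aut}(\mathfrak h)$), and verify that $\Phi$ intertwines the two pairs of operations, namely $\Phi([w_1,w_2]_{\chi})=[\Phi(w_1),\Phi(w_2)]_{\chi_{(\alpha,\beta)}}$ and $\Phi(\{w_1,w_2,w_3\}_{\omega})=\{\Phi(w_1),\Phi(w_2),\Phi(w_3)\}_{\omega_{(\alpha,\beta)}}$ for all $w_i=x_i+a_i\in\mathfrak g\oplus\mathfrak h$. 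Expanding the right-hand sides via (\ref{NLts0})--(\ref{NLts}) and inserting the definitions (\ref{Inc1})--(\ref{Inc4}) of the twisted maps, every $\alpha^{-1}$ occurring there is applied to an argument of the form $\alpha(x_i)$ and every $\beta^{-1}$ to one of the form $\beta(a_i)$, so these inverses cancel; combining this with $[\alpha(x),\alpha(y)]_{\mathfrak g}=\alpha[x,y]_{\mathfrak g}$, $[\beta(a),\beta(b)]_{\mathfrak h}=\beta[a,b]_{\mathfrak h}$ and the analogous ternary identities (all valid because $\alpha$, $\beta$ are homomorphisms) turns both sides into the same expression, so the two intertwining relations hold.

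Finally I would conclude: since $\Phi$ is a linear isomorphism carrying $[\ ,\ ]_{\chi},\{\ ,\ ,\ \}_{\omega}$ onto $[\ ,\ ]_{\chi_{(\alpha,\beta)}},\{\ ,\ ,\ \}_{\omega_{(\alpha,\beta)}}$, the latter operations equip $\mathfrak g\oplus\mathfrak h$ with a Lie-Yamaguti algebra structure (isomorphic to $\mathfrak g\oplus_{(\chi,\omega)}\mathfrak h$). As $\mu_{(\alpha,\beta)}$ is linear, $\chi_{(\alpha,\beta)},\theta_{(\alpha,\beta)},D_{(\alpha,\beta)}$ are bilinear, and $\omega_{(\alpha,\beta)},\rho_{(\alpha,\beta)},T_{(\alpha,\beta)}$ have the remaining required types (being conjugates of maps of those types), the converse implication of Proposition~\ref{LY} then yields that $(\chi_{(\alpha,\beta)},\omega_{(\alpha,\beta)},\mu_{(\alpha,\beta)},\theta_{(\alpha,\beta)},D_{(\alpha,\beta)},\rho_{(\alpha,\beta)},T_{(\alpha,\beta)})$ is a non-abelian $(2,3)$-cocycle. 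I do not expect a genuine obstacle here; the only delicate point is the bookkeeping in the intertwining computation, in particular keeping track of which argument each $\alpha^{-1}$ or $\beta^{-1}$ lands on in the ternary case. The alternative route --- verifying (\ref{L00})--(\ref{L35}) directly for the twisted maps --- is correct but considerably more laborious.
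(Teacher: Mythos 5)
Your proposal is correct, and it takes a genuinely different route from the paper. The paper proves the proposition by direct verification: it writes out the computation showing that identity (\ref{L1}) holds for the twisted maps (by inserting the definitions (\ref{Inc1})--(\ref{Inc3}), letting the $\alpha^{-1}$'s and $\beta^{-1}$'s cancel, and invoking (\ref{L1}) for the original cocycle), and then asserts that the remaining identities (\ref{L12})--(\ref{L35}) follow similarly. You instead observe that the map $\Phi=\alpha\oplus\beta$ intertwines $([\ ,\ ]_{\chi},\{\ ,\ ,\ \}_{\omega})$ with $([\ ,\ ]_{\chi_{(\alpha,\beta)}},\{\ ,\ ,\ \}_{\omega_{(\alpha,\beta)}})$ --- a single two-line cancellation of inverses using that $\alpha$ and $\beta$ are homomorphisms --- so the twisted operations are the transport of a Lie-Yamaguti structure along a bijection and hence again a Lie-Yamaguti structure, and the converse direction of Proposition~\ref{LY} then delivers the cocycle conditions all at once. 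Your argument is shorter, replaces some thirty-odd case checks by one intertwining computation, and makes the conceptual reason transparent (the twisted cocycle is the cocycle of the extension transported by $(\alpha,\beta)$, which is exactly how it is used in Theorem~\ref{Eth1}); the paper's computation is more self-contained in that it does not lean on the equivalence of Proposition~\ref{LY}, but at the cost of repetition. The one point you should make explicit when writing this up is that the operations built from the twisted septuple via (\ref{NLts0})--(\ref{NLts}) coincide with the $\Phi$-transported operations --- that is precisely the content of your intertwining identities --- since Proposition~\ref{LY} applies only to operations presented in that form.
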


\begin{proof}
By Eqs.~(\ref{L1}), (\ref{Inc1}) and (\ref{Inc3}), for all $x_1,x_2,y_1,y_2,y_3\in \mathfrak g$, we get
\begin{eqnarray*}&&D_{(\alpha,\beta)}(x_1,x_2)\omega_{(\alpha,\beta)}(y_1,y_2,y_3)+\omega_{(\alpha,\beta)}(x_1,x_2,\{y_1,y_2,y_3\}_{\mathfrak
g})\\&=&\beta
D(\alpha^{-1}(x_1),\alpha^{-1}(x_2))\beta^{-1}\beta\omega_{(\alpha,\beta)}(\alpha^{-1}(y_1),\alpha^{-1}(y_2),\alpha^{-1}(y_3))
\\&&-\beta\omega_{(\alpha,\beta)}(\alpha^{-1}(x_1),\alpha^{-1}(x_2),\alpha^{-1}(\{y_1,y_2,y_3\}_{\mathfrak
g}))
\\&=&
\beta\omega(\{\alpha^{-1}(x_1),\alpha^{-1}(x_2),\alpha^{-1}(y_1)\}_{\mathfrak
g},\alpha^{-1}(y_2),\alpha^{-1}(y_3))\\&&+\beta\theta(\alpha^{-1}(y_2),\alpha^{-1}(y_3))\omega(\alpha^{-1}(x_1),\alpha^{-1}(x_2),\alpha^{-1}(y_1))
\\&&+\beta\omega(\alpha^{-1}(y_1),\{\alpha^{-1}(x_1),\alpha^{-1}(x_2),\alpha^{-1}(y_2)\}_{\mathfrak
g},\alpha^{-1}(y_3))\\&&-\beta\theta(\alpha^{-1}(y_1),\alpha^{-1}(y_3))\omega(\alpha^{-1}(x_1),\alpha^{-1}(x_2),\alpha^{-1}(y_2))
\\&&+\beta\omega(\alpha^{-1}(y_1),\alpha^{-1}(y_2),\{\alpha^{-1}(x_1),\alpha^{-1}(x_2),\alpha^{-1}(y_3)\}_{\mathfrak
g})\\&&+\beta
D(\alpha^{-1}(y_1),\alpha^{-1}(y_2))\omega(\alpha^{-1}(x_1),\alpha^{-1}(x_2),\alpha^{-1}(y_3))
\\&=&\omega_{(\alpha,\beta)}(\{x_1,x_2,y_1\}_{\mathfrak
g},y_2,y_3)+\theta_{(\alpha,\beta)}(y_2,y_3)\omega_{(\alpha,\beta)}(x_1,x_2,y_1)+\omega_{(\alpha,\beta)}(y_1,\{x_1,x_2,y_2\}_{\mathfrak
g},y_3)\\&&-\theta_{(\alpha,\beta)}(y_1,y_3)\omega_{(\alpha,\beta)}(x_1,x_2,y_2)+\omega_{(\alpha,\beta)}(y_1,y_2,\{x_1,x_2,y_3\}_{\mathfrak
g})+D_{(\alpha,\beta)}(y_1,y_2)\omega_{(\alpha,\beta)}(x_1,x_2,y_3),
 \end{eqnarray*}
which implies that Eq.~(\ref{L1}) holds for
$(\omega_{(\alpha,\beta)},\theta_{(\alpha,\beta)})$. Similarly, we can check that
Eqs.~(\ref{L12})-(\ref{L35}) hold. This completes the proof.
\end{proof}

\begin{thm} \label{Eth1} Let $0\longrightarrow\mathfrak h\stackrel{i}{\longrightarrow}
\hat{\mathfrak g}\stackrel{p}{\longrightarrow}\mathfrak
g\longrightarrow0$ be a non-abelian extension of a Lie-Yamaguti algebra
$\mathfrak g$
 by $\mathfrak h$ with a section $s$ of $p$ and $(\chi,\omega,\mu,\theta,D,\rho,T)$ be the
corresponding non-abelian (2,3)-cocycle induced by $s$. A pair $(\alpha,\beta)\in \mathrm{Aut}(\mathfrak
g)\times \mathrm{Aut}(\mathfrak h)$ is extensible if and only if the
non-abelian (2,3)-cocycles $(\chi,\omega,\mu,\theta,D,\rho,T)$ and
$(\chi_{(\alpha,\beta)},\omega_{(\alpha,\beta)},\mu_{(\alpha,\beta)},\theta_{(\alpha,\beta)},
D_{(\alpha,\beta)},\rho_{(\alpha,\beta)},T_{(\alpha,\beta)})$
are equivalent.
\end{thm}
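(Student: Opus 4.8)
The plan is to deduce the statement from two things already established: Theorem~\ref{EC}, which says that $(\alpha,\beta)$ is extensible exactly when the system of equations (\ref{Iam1})--(\ref{Iam7}) is solvable in an unknown linear map $\varphi\colon\mathfrak g\to\mathfrak h$; and the definition of equivalence of non-abelian (2,3)-cocycles, which says that $(\chi_{(\alpha,\beta)},\omega_{(\alpha,\beta)},\mu_{(\alpha,\beta)},\theta_{(\alpha,\beta)},D_{(\alpha,\beta)},\rho_{(\alpha,\beta)},T_{(\alpha,\beta)})$ and $(\chi,\omega,\mu,\theta,D,\rho,T)$ are equivalent exactly when the system (\ref{E1})--(\ref{E6}) is solvable in a linear map $\psi\colon\mathfrak g\to\mathfrak h$, where I take $(\chi,\ldots)$ to be the \emph{second} cocycle, i.e.\ the one whose structure maps appear on the right-hand sides of (\ref{E1})--(\ref{E6}); this is the right choice since the structure maps of the transformed cocycle are $\beta$-conjugates and we want the un-conjugated maps on the right. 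Given these, the theorem follows as soon as I exhibit a bijection between the two solution sets, and the natural one is $\psi=\varphi\circ\alpha^{-1}$, or equivalently $\varphi=\psi\circ\alpha$.

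The content is then a term-by-term check that this substitution transforms the system (\ref{Iam1})--(\ref{Iam7}) into the system (\ref{E1})--(\ref{E6}), which I would carry out one equation at a time. In each of (\ref{Iam1})--(\ref{Iam7}) I replace the $\mathfrak g$-arguments by their $\alpha^{-1}$-images and the $\mathfrak h$-argument by its $\beta^{-1}$-image, and then use: that $\alpha$ and $\beta$ are algebra automorphisms, so that $\alpha^{-1}[x,y]_{\mathfrak g}=[\alpha^{-1}x,\alpha^{-1}y]_{\mathfrak g}$, $\alpha^{-1}\{x,y,z\}_{\mathfrak g}=\{\alpha^{-1}x,\alpha^{-1}y,\alpha^{-1}z\}_{\mathfrak g}$, while $\beta$ commutes with $[\ ,\ ]_{\mathfrak h}$ and with $\{\ ,\ ,\ \}_{\mathfrak h}$; the definitions (\ref{Inc1})--(\ref{Inc4}) of the transformed cocycle, for instance $\beta\mu(\alpha^{-1}x)\beta^{-1}=\mu_{(\alpha,\beta)}(x)$ and $\beta\theta(\alpha^{-1}x,\alpha^{-1}y)\beta^{-1}=\theta_{(\alpha,\beta)}(x,y)$, and similarly for $D,\rho,T,\chi,\omega$; and the identity $\psi=\varphi\circ\alpha^{-1}$. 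As a sample, (\ref{Iam7}), namely $\beta\mu(x)a-\mu(\alpha x)\beta a=[\beta a,\varphi(x)]_{\mathfrak h}$, becomes after the substitution $\mu_{(\alpha,\beta)}(x)a-\mu(x)a=[a,\psi(x)]_{\mathfrak h}$, which is exactly (\ref{E3}); notice that the outer $\beta$'s cancel precisely because the argument inserted was $\beta^{-1}a$. In the same way I expect (\ref{Iam2}) to go to (\ref{E1}), (\ref{Iam1}) to go to (\ref{E2}) (reading its last term as $\varphi\{x,y,z\}_{\mathfrak g}$), (\ref{Iam3}) to go to (\ref{E4}), (\ref{Iam4}) to go to (\ref{E5}), and (\ref{Iam5}) and (\ref{Iam6}) to go to the two equations comprising (\ref{E6}). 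As the substitution is an invertible operation on maps, this is a genuine bijection of solution sets.

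Once the bijection is in place, both directions are immediate: if $(\alpha,\beta)$ is extensible, Theorem~\ref{EC} gives $\varphi$ solving (\ref{Iam1})--(\ref{Iam7}), hence $\psi=\varphi\circ\alpha^{-1}$ solves (\ref{E1})--(\ref{E6}) and the two cocycles are equivalent; conversely, an equivalence is witnessed by some $\psi$, hence $\varphi=\psi\circ\alpha$ solves (\ref{Iam1})--(\ref{Iam7}) and $(\alpha,\beta)$ is extensible by Theorem~\ref{EC}. I do not expect a genuine obstacle; the only delicate points are in the middle paragraph --- choosing which cocycle plays the ``second'' role so that the conjugating automorphisms produced by (\ref{Inc1})--(\ref{Inc4}) are absorbed rather than left over, and carefully tracking signs and the order of the entries in the non-commutative operations $[\ ,\ ]_{\mathfrak h}$, $\{\ ,\ ,\ \}_{\mathfrak h}$, $\rho$, $T$ when matching (\ref{Iam3}) against (\ref{E4}) and (\ref{Iam1}) against (\ref{E2}). (Alternatively one could argue conceptually: the transformed cocycle classifies the extension obtained by transporting $\mathcal E$ along $(\alpha,\beta)$, and extensibility says this transported extension is equivalent to $\mathcal E$; but the computational route above is the quickest given Theorem~\ref{EC}.)
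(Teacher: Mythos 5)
Your proposal is correct and follows essentially the same route as the paper: the paper likewise invokes Theorem~\ref{EC} to produce $\varphi$, sets the equivalence map to be $\varphi\alpha^{-1}$, and verifies by the substitution $x=\alpha(x_0)$, $a=\beta(a_0)$ that Eqs.~(\ref{Iam1})--(\ref{Iam7}) turn into Eqs.~(\ref{E1})--(\ref{E6}) with the transformed cocycle in the first slot and $(\chi,\omega,\mu,\theta,D,\rho,T)$ in the second, exactly as you chose. The only difference is which sample equation is written out in full (the paper checks (\ref{Iam3})$\Rightarrow$(\ref{E4}), you check (\ref{Iam7})$\Rightarrow$(\ref{E3})), and both treatments leave the converse to the same reversible substitution.
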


\begin{proof}
Suppose $(\alpha,\beta)\in \mathrm{Aut}(\mathfrak g)\times \mathrm{Aut}(\mathfrak h)$
is extensible, by Theorem ~\ref{EC}, there is a linear map
$\varphi:\mathfrak g\longrightarrow \mathfrak h$ satisfying
Eqs.~~(\ref{Iam1})-(\ref{Iam7}). For all $x,y\in \mathfrak g,a\in
\mathfrak h$, there exist $x_0,y_0\in \mathfrak g,a_0\in \mathfrak
h$ such that $x=\alpha(x_0),y=\alpha(y_0),a=\beta(a_0)$. Thus,
by Eqs.~(\ref{Iam3}), (\ref{Inc3}) and (\ref{Inc5}), we have
\begin{eqnarray*}&&
\theta_{(\alpha,\beta)}(x,y)a-\theta(x,y)a\\&=&\beta(\theta(\alpha^{-1}(x),\alpha^{-1}(y))\beta^{-1}(a))
-\theta(x,y)a
\\&=& \beta(\theta(x_0,y_0)a_0)-\theta(\alpha(x_0),\alpha(y_0))\beta(a_0)
\\&=&\{\beta(a_0),\varphi(x_0),\varphi(y_0)\}_{\mathfrak
h}-T(\alpha(y_0))(\beta(a_0),\varphi(x_0))+\rho(\alpha(x_0))(\beta(a_0),\varphi(y_0))
\\&=&\{a,\varphi\alpha^{-1}(x),\varphi\alpha^{-1}(y)\}_{\mathfrak
h}-T(y)(a,\varphi\alpha^{-1}(x))+\rho(x)(a,\varphi\alpha^{-1}(y)),
 \end{eqnarray*}
which indicates that Eq.~(\ref{E4}) holds. Analogously,
Eqs.~(\ref{E1})-(\ref{E3}) and (\ref{E5})-(\ref{E6}) hold. Thus,
$(\chi,\omega,\mu,\theta,D,\rho,T)$ and
$(\chi_{(\alpha,\beta)},\omega_{(\alpha,\beta)},\mu_{(\alpha,\beta)},\theta_{(\alpha,\beta)},
D_{(\alpha,\beta)},\rho_{(\alpha,\beta)},T_{(\alpha,\beta)})$
are equivalent via the linear map
$\varphi\alpha^{-1}:\mathfrak g\longrightarrow \mathfrak h$.

The converse part can be obtained analogously.

\end{proof}

\section{Wells exact sequences for Lie-Yamaguti algebras}
In this section, we consider the Wells map associated with non-abelian extensions of Lie-Yamaguti algebras.
Then we interpret the results gained in Section 5 in terms of the Wells map.

Let $\mathcal{E}:0\longrightarrow\mathfrak h\stackrel{i}{\longrightarrow}
\hat{\mathfrak g}\stackrel{p}{\longrightarrow}\mathfrak
g\longrightarrow0$
be a non-abelian extension of $\mathfrak
g$ by $\mathfrak
h$ with a section $s$ of $p$.
Then there is a linear map $t:\hat{\mathfrak g}\longrightarrow \mathfrak
h$, such that
\begin{equation}\label{W0}it+sp=I_{\hat{\mathfrak g}}.\end{equation}
Assume that $(\chi,\omega,\mu,\theta,D,\rho,T)$ is the corresponding non-abelian (2,3)-cocycle
induced by $s$.
Define a linear map $W:\mathrm{Aut}(\mathfrak
g)\times \mathrm{Aut}(\mathfrak
h)\longrightarrow H^{(2,3)}_{nab}(\mathfrak
g,\mathfrak
h)$ by
\begin{equation}\label{W1}
	W(\alpha,\beta)=[(\chi_{(\alpha,\beta)},\omega_{(\alpha,\beta)},\mu_{(\alpha,\beta)},\theta_{(\alpha,\beta)},D_{(\alpha,\beta)},
\rho_{(\alpha,\beta)},T_{(\alpha,\beta)})
-(\chi,\omega,\mu,\theta,D,\rho,T)].
\end{equation}
The map $W$ is called the Wells map.

 \begin{pro} \label{Wm1}
	The Wells map $W$ does not depend on the choice of sections. \end{pro}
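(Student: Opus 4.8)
The plan is to show that if we replace the section $s$ by another section $s'$ of $p$, the resulting Wells map $W'$ coincides with $W$ as a map into $H^{(2,3)}_{nab}(\mathfrak g,\mathfrak h)$. First I would recall from Lemma~\ref{Le1} that the non-abelian $(2,3)$-cocycles $(\chi,\omega,\mu,\theta,D,\rho,T)$ and $(\chi',\omega',\mu',\theta',D',\rho',T')$ corresponding to $s$ and $s'$ are equivalent via the linear map $\varphi_0:\mathfrak g\to\mathfrak h$, $\varphi_0(x)=s'(x)-s(x)$, i.e.\ they satisfy Eqs.~(\ref{E1})--(\ref{E6}) with $\varphi=\varphi_0$. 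So the two cocycles determine the \emph{same} class in $H^{(2,3)}_{nab}(\mathfrak g,\mathfrak h)$. What must be checked is that the \emph{difference} appearing in \eqref{W1} is also unchanged in cohomology, i.e.\ that
\[
[(\chi'_{(\alpha,\beta)},\omega'_{(\alpha,\beta)},\dots)-(\chi',\omega',\dots)]
=[(\chi_{(\alpha,\beta)},\omega_{(\alpha,\beta)},\dots)-(\chi,\omega,\dots)].
\]

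The key computation is to relate the twisted cocycle $(\chi'_{(\alpha,\beta)},\dots)$ built from $s'$ to the twisted cocycle $(\chi_{(\alpha,\beta)},\dots)$ built from $s$. Since the twisting construction \eqref{Inc1}--\eqref{Inc4} is functorial in the underlying cocycle — each of its components is obtained by conjugating by $\alpha,\beta$ — and since $s\mapsto s'$ changes the cocycle by the coboundary-type correction $\varphi_0$ governed by \eqref{E1}--\eqref{E6}, applying the $(\alpha,\beta)$-twist to that correction should show that $(\chi',\omega',\dots)$ and $(\chi'_{(\alpha,\beta)},\dots)$ differ from $(\chi,\omega,\dots)$ and $(\chi_{(\alpha,\beta)},\dots)$, respectively, via the \emph{same} map, namely $\varphi_0$ in the first case and $\beta\circ\varphi_0\circ\alpha^{-1}$ in the second. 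Concretely, I would verify, using \eqref{Inc1}, that
\[
\chi'_{(\alpha,\beta)}(x,y)-\chi_{(\alpha,\beta)}(x,y)=\beta\bigl(\chi'(\alpha^{-1}x,\alpha^{-1}y)-\chi(\alpha^{-1}x,\alpha^{-1}y)\bigr),
\]
and analogously for $\omega,\mu,\theta,D,\rho,T$; then substitute the right-hand side of Eqs.~\eqref{E1}--\eqref{E6} and conjugate by $\beta$, $\alpha^{-1}$. After this substitution, the difference $(\chi'_{(\alpha,\beta)},\dots)-(\chi',\omega',\dots)$ and the difference $(\chi_{(\alpha,\beta)},\dots)-(\chi,\omega,\dots)$ turn out to differ precisely by the equivalence data associated with the single linear map $\psi:=\beta\varphi_0\alpha^{-1}-\varphi_0:\mathfrak g\to\mathfrak h$ (note that the $(\alpha,\beta)$-twisting of the correction term itself is governed by $\beta\varphi_0\alpha^{-1}$ while the untwisted correction uses $\varphi_0$, and since both cocycles $(\chi,\dots)$ and $(\chi_{(\alpha,\beta)},\dots)$ are being altered, the net change is encoded by $\psi$). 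Hence the two differences are equivalent non-abelian $(2,3)$-cocycles and represent the same class in $H^{(2,3)}_{nab}(\mathfrak g,\mathfrak h)$, which is exactly $W(\alpha,\beta)=W'(\alpha,\beta)$.

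The main obstacle I anticipate is purely bookkeeping: verifying that the nonlinear terms in \eqref{E2}, \eqref{E4}, \eqref{E5}, \eqref{E6} (the expressions quadratic and cubic in $\varphi_0$, such as $\{\varphi_0(x),\varphi_0(y),\varphi_0(z)\}_{\mathfrak h}$, $\rho_2(x)(\varphi_0(y),\varphi_0(z))$, etc.) transform correctly under conjugation by $(\alpha,\beta)$ and assemble into the equivalence relation governed by $\psi$ rather than by two unrelated maps. Because the set $H^{(2,3)}_{nab}(\mathfrak g,\mathfrak h)$ is only a pointed set (a quotient by an equivalence relation, not by a subgroup), one cannot simply "subtract" cocycles freely; one must check that the \emph{formal difference} appearing in \eqref{W1} is genuinely a non-abelian $(2,3)$-cocycle and that changing the section alters it only within its equivalence class. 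I would handle this by writing out the difference cocycle explicitly for one representative component (say the $\omega$-component, using \eqref{Inc1} and \eqref{E2}), confirming it has the shape of a coboundary correction by $\psi$, and then remarking that the remaining six components follow by the same substitution; alternatively, and more cleanly, I would phrase the whole argument through Theorem~\ref{Eth1} by noting that extensibility of $(\alpha,\beta)$ — equivalently the vanishing of $W(\alpha,\beta)$ — is intrinsic to the extension $\mathcal{E}$ and independent of $s$, and that the class $W(\alpha,\beta)$ itself is the obstruction, hence section-independent. This completes the proof.
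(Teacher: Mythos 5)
Your proposal follows essentially the same route as the paper: invoke Lemma~\ref{Le1} to get that the two untwisted cocycles are equivalent via $\varphi_0=s'-s$, compute from Eqs.~(\ref{Inc1})--(\ref{Inc4}) that the two $(\alpha,\beta)$-twisted cocycles are equivalent via $\beta\varphi_0\alpha^{-1}$, and conclude that the two formal differences in (\ref{W1}) are equivalent via $\beta\varphi_0\alpha^{-1}-\varphi_0$, hence define the same class. This is exactly the paper's argument (the paper writes out the $\omega$-component in full and asserts the rest "by the same token"), so the proposal is correct and essentially identical in approach.
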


\begin{proof} For all $x,y,z\in \mathfrak g$, there are elements $x_0,y_0,z_0\in \mathfrak g$ such that $x=\alpha(x_0),y=\alpha(y_0),z=\alpha(z_0)$.
Assume that $(\chi',\omega',\mu',\theta',D^{'},\rho^{'},T^{'}) $
is another non-abelian (2,3)-cocycle corresponding to the non-abelian extension $\mathcal{E}$.
 By Lemma \ref{Le1}, we know that $(\chi',\omega',\mu',\theta',D^{'},\rho^{'},T^{'})$
 and $(\chi,\omega,\mu,\theta,D,\rho,T)$ are
 equivalent non-abelian (2,3)-cocycles via a linear map
 $\varphi:\mathfrak g\longrightarrow \mathfrak h$
According to Eqs.~(\ref{E2}) and (\ref{Inc1})-(\ref{Inc5}),
denote $\psi=\beta\varphi\alpha^{-1}$, which follows that
	\begin{align*}
		&\omega^{'}_{(\alpha,\beta)}(x,y,z)-\omega_{(\alpha,\beta)}(x,y,z)
\\=&\beta\omega^{'}(\alpha^{-1}(x),\alpha^{-1}(y),\alpha^{-1}(z))-\beta\omega(\alpha^{-1}(x),\alpha^{-1}(y),\alpha^{-1}(z))\\
		=&\beta\omega^{'}(x_0,y_0,z_0)-\beta\omega(x_0,y_0,z_0)\\
		=& \beta\Big(\theta(x_0,z_0)\varphi(y_0)-D(x_0,y_0)\varphi(z_0)
+\rho(x_0)(\varphi(y_0),\varphi(z_0))-\theta(y_0,z_0)\varphi(x_0)
\\&+T(z_0)(\varphi(x_0),\varphi(y_0))-\rho(y_0)(\varphi(x_0),\varphi(z_0))-\{\varphi(x_0),\varphi(y_0),\varphi(z_0)\}_{\mathfrak
h}+\varphi\{x_0,y_0,z_0\}_{\mathfrak
g}\Big)
\\=& \beta\Big(\theta(\alpha^{-1}(x), \alpha^{-1}(z))\beta^{-1}\psi(y)-D(\alpha^{-1}(x), \alpha^{-1}(y))\beta^{-1}\psi(z)
+\rho(\alpha^{-1}(x))(\beta^{-1}\psi(y),\beta^{-1}\psi(z))\\&-\theta(\alpha^{-1}(y), \alpha^{-1}(z))\beta^{-1}\psi(x)
+T(\alpha^{-1}(z))(\beta^{-1}\psi(x),\beta^{-1}\psi(y))-\rho(\alpha^{-1}(y))(\beta^{-1}\psi(x),\beta^{-1}\psi(z))\Big)
\\&-\{\psi(x),\psi(y),\psi(z)\}_{\mathfrak
h}+\psi(\{x, y,z\}_{\mathfrak
g})
\\=& \theta_{(\alpha,\beta)}(x,z)\psi(y)-D_{(\alpha,\beta)}(x,y)\psi(z)
+\rho_{(\alpha,\beta)}(x)(\psi(y),\psi(z))-\theta_{(\alpha,\beta)}(y, z)\psi(x)
\\&+T_{(\alpha,\beta)}(z)(\psi(x),\psi(y))-\rho_{(\alpha,\beta)}(y)(\psi(x),\psi(z))
-\{\psi(x),\psi(y),\psi(z)\}_{\mathfrak
h}+\psi(\{x, y,z\}_{\mathfrak
g}).
\end{align*}
By the same token,
\begin{align*}&\chi^{'}_{(\alpha,\beta)}(x,y)-\chi_{(\alpha,\beta)}(x,y)=[\psi(x),\psi(y)]_{\mathfrak h}+\psi([x,y]_{\mathfrak g})
 -\mu_{(\alpha,\beta)}(x)\psi(y)+\mu_{(\alpha,\beta)}(y)\psi(x),
\\&\theta^{'}_{(\alpha,\beta)}(x,y)a-\theta_{(\alpha,\beta)}(x,y)a
   =\rho_{(\alpha,\beta)}(x)(a,\psi(y))-T(y)(a,\psi(x))+\{a,\psi(x),\psi(y)\}_{\mathfrak
h},
\\&D^{'}_{(\alpha,\beta)}(x,y)a-D_{(\alpha,\beta)}(x,y)a
 =\rho_{(\alpha,\beta)}(y)(\psi(x),a)-\rho_{(\alpha,\beta)}(x)(\psi(y),a)
 +\{\psi(x),\psi(y),a\}_{\mathfrak
h},
 \\&\mu^{'}_{(\alpha,\beta)}(x)a-\mu_{(\alpha,\beta)}(x)a=[a,\psi(x)]_{\mathfrak
h},~~~~\rho^{'}_{(\alpha,\beta)}(x)(a,b)-\rho_{(\alpha,\beta)}(x)(a,b)=\{a,\psi(x),b\}_{\mathfrak
h}, \\&T^{'}_{(\alpha,\beta)}(x)(a,b)-T_{(\alpha,\beta)}(x)(a,b)=\{b,a,\psi(x)\}_{\mathfrak
h}.\end{align*}
So, $(\chi'_{(\alpha,\beta)},\omega'_{(\alpha,\beta)},\mu'_{(\alpha,\beta)},\theta'_{(\alpha,\beta)},
D'_{(\alpha,\beta)},\rho'_{(\alpha,\beta)},T'_{(\alpha,\beta)})$
and $(\chi_{(\alpha,\beta)},\omega_{(\alpha,\beta)},\mu_{(\alpha,\beta)},\theta_{(\alpha,\beta)},
D_{(\alpha,\beta)},\rho_{(\alpha,\beta)},T_{(\alpha,\beta)})$
 are equivalent non-abelian (2,3)-cocycles via the linear map $\psi=\beta\varphi\alpha^{-1}$.
Combining Lemma \ref{Le1}, we know that
 \begin{align*}(\chi'_{(\alpha,\beta)},\omega'_{(\alpha,\beta)},\mu'_{(\alpha,\beta)},\theta'_{(\alpha,\beta)},
D'_{(\alpha,\beta)},\rho'_{(\alpha,\beta)},T'_{(\alpha,\beta)})-(\chi',\omega',\mu',\theta',D^{'},\rho^{'},T^{'})\end{align*}
 and \begin{align*} (\chi_{(\alpha,\beta)},\omega_{(\alpha,\beta)},\mu_{(\alpha,\beta)},\theta_{(\alpha,\beta)},
D_{(\alpha,\beta)},\rho_{(\alpha,\beta)},T_{(\alpha,\beta)})-(\chi,\omega,\mu,\theta,D,\rho,T)\end{align*}
 are equivalent via the linear map $\beta\varphi\alpha^{-1}-\varphi$.
\end{proof}

  \begin{pro} \label{Wm2} Let
$\mathcal{E}:0\longrightarrow\mathfrak h\stackrel{i}{\longrightarrow}
\hat{\mathfrak g}\stackrel{p}{\longrightarrow}\mathfrak g\longrightarrow0$
be a non-abelian extension of $\mathfrak g$ by $\mathfrak h$  with a section $s$ of $p$. Define a map
\begin{equation}\label{W3}K:\mathrm{Aut}_{\mathfrak h}(\hat{\mathfrak g})\longrightarrow \mathrm{Aut}(\mathfrak g)\times \mathrm{Aut}(\mathfrak g),~~K(\gamma)=(p\gamma s,\gamma|_{\mathfrak h}),~\forall~\gamma\in \mathrm{Aut}_{\mathfrak h}(\hat{\mathfrak g}).\end{equation}
 Then $K$ is a homomorphism of groups. \end{pro}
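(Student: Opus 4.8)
The plan is to verify first that $K$ is well-defined and then that it is multiplicative. For $\gamma\in\mathrm{Aut}_{\mathfrak h}(\hat{\mathfrak g})$, the restriction $\gamma|_{\mathfrak h}$ is a bijective linear self-map of $\mathfrak h$ preserving $[\ ,\ ]_{\mathfrak h}$ and $\{\ ,\ ,\ \}_{\mathfrak h}$ (these are just the operations of $\hat{\mathfrak g}$ restricted to $\mathfrak h$), so $\gamma|_{\mathfrak h}\in\mathrm{Aut}(\mathfrak h)$. Set $\pi_\gamma:=p\gamma s$; it is linear and independent of the chosen section, since for another section $s'$ the difference $s'-s$ lands in $\ker p=\mathfrak h$, and then $p\gamma(s'-s)=0$ because $\gamma(\mathfrak h)=\mathfrak h$. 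To see $\pi_\gamma$ is a homomorphism of Lie-Yamaguti algebras, I would apply $p$ to $\gamma[s(x),s(y)]_{\hat{\mathfrak g}}=[\gamma s(x),\gamma s(y)]_{\hat{\mathfrak g}}$: since $[s(x),s(y)]_{\hat{\mathfrak g}}=s[x,y]_{\mathfrak g}+\chi(x,y)$ with $\chi(x,y)\in\mathfrak h$, and $p\gamma$ kills $\mathfrak h=\ker p$, the left-hand side becomes $\pi_\gamma([x,y]_{\mathfrak g})$, while the right-hand side becomes $[\pi_\gamma(x),\pi_\gamma(y)]_{\mathfrak g}$ because $p$ is a homomorphism; the analogous computation with $\{s(x),s(y),s(z)\}_{\hat{\mathfrak g}}=s\{x,y,z\}_{\mathfrak g}+\omega(x,y,z)$ handles the ternary bracket.

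For multiplicativity, take $\gamma_1,\gamma_2\in\mathrm{Aut}_{\mathfrak h}(\hat{\mathfrak g})$. The second component is immediate: $(\gamma_1\gamma_2)|_{\mathfrak h}=\gamma_1|_{\mathfrak h}\circ\gamma_2|_{\mathfrak h}$. For the first, I would insert the identity $I_{\hat{\mathfrak g}}=sp+it$ of Eq.~(\ref{W0}) between $\gamma_1$ and $\gamma_2$:
\begin{equation*}
\pi_{\gamma_1\gamma_2}=p\gamma_1\gamma_2 s=p\gamma_1(sp+it)\gamma_2 s=(p\gamma_1 s)(p\gamma_2 s)+(p\gamma_1 i)(t\gamma_2 s)=\pi_{\gamma_1}\circ\pi_{\gamma_2},
\end{equation*}
the last term vanishing since $\gamma_1(\mathfrak h)=\mathfrak h=\ker p$ forces $p\gamma_1 i=0$. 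Hence $K(\gamma_1\gamma_2)=K(\gamma_1)K(\gamma_2)$. Finally, taking $\gamma_2=\gamma_1^{-1}$ (which again lies in $\mathrm{Aut}_{\mathfrak h}(\hat{\mathfrak g})$) and using $\pi_{I_{\hat{\mathfrak g}}}=ps=I_{\mathfrak g}$ yields $\pi_{\gamma_1}\circ\pi_{\gamma_1^{-1}}=\pi_{\gamma_1^{-1}}\circ\pi_{\gamma_1}=I_{\mathfrak g}$, so $\pi_{\gamma_1}$ is invertible; combined with the previous paragraph this shows $\pi_\gamma\in\mathrm{Aut}(\mathfrak g)$, which completes the well-definedness of $K$.

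I do not anticipate a genuine obstacle: the content is entirely formal manipulation of the structural maps $i,p,s,t$, and the only thing needing care is the repeated correct use of $p\gamma i=0$ (i.e.\ $\gamma(\mathfrak h)\subseteq\mathfrak h$) together with Eq.~(\ref{W0}) and $ps=I_{\mathfrak g}$. The tidiest order is to dispose of $\gamma|_{\mathfrak h}$, then prove $\pi_\gamma$ is a homomorphism, then prove multiplicativity, and extract the invertibility of $\pi_\gamma$ as a corollary of multiplicativity rather than verifying it in isolation.
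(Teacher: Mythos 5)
Your proof is correct, and it is precisely the standard argument the paper has in mind: the paper's own ``proof'' of this proposition is a one-line deferral to the Lie algebra case in the Bardakov--Singh reference, and your computation (using $it+sp=I_{\hat{\mathfrak g}}$ together with $p\gamma i=0$ to get multiplicativity of $\gamma\mapsto p\gamma s$, and deducing invertibility from multiplicativity) is exactly the routine being alluded to. No gaps; the only point worth stating explicitly is that $\gamma^{-1}\in\mathrm{Aut}_{\mathfrak h}(\hat{\mathfrak g})$ because $\gamma(\mathfrak h)=\mathfrak h$ forces $\gamma^{-1}(\mathfrak h)=\mathfrak h$, which you do note.
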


\begin{proof}
One can take the same procedure of Lie algebras, see \cite{1}.
\end{proof}

 \begin{thm} \label{Wm3} Assume that
$\mathcal{E}:0\longrightarrow\mathfrak h\stackrel{i}{\longrightarrow}
\hat{\mathfrak g}\stackrel{p}{\longrightarrow}\mathfrak g\longrightarrow0$
is a non-abelian extension of $\mathfrak g$ by $\mathfrak h$ with a section $s$ of $\hat{\mathfrak g}$.
Then there is an exact sequence:
$$1\longrightarrow \mathrm{Aut}_{\mathfrak h}^{\mathfrak g}(\hat{\mathfrak g})\stackrel{H}{\longrightarrow} \mathrm{Aut}_{\mathfrak h}(\hat{\mathfrak g})\stackrel{K}{\longrightarrow}\mathrm{Aut}(\mathfrak g)\times \mathrm{Aut}(\mathfrak h)\stackrel{W}{\longrightarrow} H^{(2,3)}_{nab}(\mathfrak g,\mathfrak h),$$
where $\mathrm{Aut}_{\mathfrak h}^{\mathfrak g}(\hat{\mathfrak g})=\{\gamma \in \mathrm{Aut}(\hat{\mathfrak g})| K(\gamma)=(I_{\mathfrak g},I_{\mathfrak h}) \}$.\end{thm}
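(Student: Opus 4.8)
The plan is to establish exactness of the displayed sequence at each of its three relevant terms, reducing everything to the results of Sections 5 and 6. Exactness at $\mathrm{Aut}_{\mathfrak h}^{\mathfrak g}(\hat{\mathfrak g})$ merely says that $H$ --- the inclusion $\mathrm{Aut}_{\mathfrak h}^{\mathfrak g}(\hat{\mathfrak g})\hookrightarrow \mathrm{Aut}_{\mathfrak h}(\hat{\mathfrak g})$ --- is injective, which is clear. Exactness at $\mathrm{Aut}_{\mathfrak h}(\hat{\mathfrak g})$ is then a matter of unwinding definitions: by Proposition~\ref{Wm2} the map $K$ is a group homomorphism, and $\mathrm{Aut}_{\mathfrak h}^{\mathfrak g}(\hat{\mathfrak g})$ is by definition $\{\gamma\mid K(\gamma)=(I_{\mathfrak g},I_{\mathfrak h})\}=\ker K$, so $\mathrm{im}\,H=\ker K$.

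The substance of the theorem lies in exactness at $\mathrm{Aut}(\mathfrak g)\times\mathrm{Aut}(\mathfrak h)$, that is, in the identity $\mathrm{im}\,K=\ker W$, where $\ker W$ denotes the preimage of the distinguished class $[(\chi,\omega,\mu,\theta,D,\rho,T)]$, so that $W(\alpha,\beta)$ ``vanishes'' precisely when the non-abelian $(2,3)$-cocycles $(\chi,\omega,\mu,\theta,D,\rho,T)$ and $(\chi_{(\alpha,\beta)},\omega_{(\alpha,\beta)},\mu_{(\alpha,\beta)},\theta_{(\alpha,\beta)},D_{(\alpha,\beta)},\rho_{(\alpha,\beta)},T_{(\alpha,\beta)})$ are equivalent (this is well defined by Proposition~\ref{Wm1}). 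For $\mathrm{im}\,K\subseteq\ker W$: given $\gamma\in\mathrm{Aut}_{\mathfrak h}(\hat{\mathfrak g})$ with $K(\gamma)=(\alpha,\beta)$, i.e.\ $p\gamma s=\alpha$ and $\gamma|_{\mathfrak h}=\beta$, write an arbitrary $\hat w\in\hat{\mathfrak g}$ as $\hat w=a+s(x)$ with $a\in\mathfrak h$, $x=p(\hat w)$; since $\gamma(\mathfrak h)=\mathfrak h\subseteq\ker p$ this yields $p\gamma(\hat w)=p\gamma s(x)=\alpha(x)=\alpha p(\hat w)$, hence $p\gamma=\alpha p$, while $\gamma|_{\mathfrak h}=\beta$ gives $\gamma i=i\beta$. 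Thus $(\alpha,\beta)$ is extensible, and Theorem~\ref{Eth1} then forces the two cocycles above to be equivalent, i.e.\ $(\alpha,\beta)\in\ker W$.

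Conversely, for $\ker W\subseteq\mathrm{im}\,K$: if $(\alpha,\beta)\in\ker W$, then the cocycles $(\chi,\omega,\mu,\theta,D,\rho,T)$ and $(\chi_{(\alpha,\beta)},\omega_{(\alpha,\beta)},\mu_{(\alpha,\beta)},\theta_{(\alpha,\beta)},D_{(\alpha,\beta)},\rho_{(\alpha,\beta)},T_{(\alpha,\beta)})$ are equivalent, so by the converse direction of Theorem~\ref{Eth1} the pair $(\alpha,\beta)$ is extensible; choosing an extending automorphism $\gamma\in\mathrm{Aut}_{\mathfrak h}(\hat{\mathfrak g})$ with $\gamma i=i\beta$ and $p\gamma=\alpha p$, we obtain $K(\gamma)=(p\gamma s,\gamma|_{\mathfrak h})=(\alpha p s,\beta)=(\alpha,\beta)$, so $(\alpha,\beta)\in\mathrm{im}\,K$. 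I expect the only delicate point to be bookkeeping: the translation between ``$K(\gamma)=(\alpha,\beta)$'' and ``$\gamma$ realizes the extensibility square for $(\alpha,\beta)$'' --- concretely, the small observation that for $\gamma\in\mathrm{Aut}_{\mathfrak h}(\hat{\mathfrak g})$ the single condition $p\gamma s=\alpha$ is equivalent to $p\gamma=\alpha p$ --- together with checking that $\ker W$ is legitimate, i.e.\ independent of the section, which is exactly Proposition~\ref{Wm1}. After that the assertion is a direct combination of Propositions~\ref{Wm1}, \ref{Wm2} and Theorem~\ref{Eth1}, with no fresh computation involving the cocycle identities \eqref{L00}--\eqref{L35}.
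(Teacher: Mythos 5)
Your proposal is correct and follows essentially the same route as the paper: exactness at the first two terms is definitional, and exactness at $\mathrm{Aut}(\mathfrak g)\times\mathrm{Aut}(\mathfrak h)$ is obtained in both directions from Theorem~\ref{Eth1}, with the same small bookkeeping step translating $p\gamma s=\alpha$ into $p\gamma=\alpha p$ (the paper does this via $it+sp=I_{\hat{\mathfrak g}}$, you via the decomposition $\hat w=a+s(x)$, which is the same computation). Your explicit remarks that $\ker W$ must be read as the preimage of the distinguished class and that this is section-independent by Proposition~\ref{Wm1} are points the paper leaves implicit, but they do not change the argument.
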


\begin{proof} Obviously, $\mathrm{Ker} K=\mathrm{Im}H$ and $H$ is injective. We only need to prove that $\mathrm{Ker} W=\mathrm{Im}K$.
By Theorem \ref{Eth1}, for all $(\alpha,\beta)\in \mathrm{Ker} W$, we get that $(\alpha,\beta)$ is extensible
 with respect to the non-abelian extension $\mathcal{E}$, that is, there is a $\gamma\in \mathrm{Aut}_{\mathfrak h}^{\mathfrak g}(\hat{\mathfrak g})$, such that
$i\beta=\gamma i,~p\gamma=\alpha p$, which follows that $\alpha=\alpha p s=p\gamma s,~\beta=\gamma|_{\mathfrak h}.$
Thus,  $(\alpha,\beta)\in \mathrm{Im}K$. On the other hand, for any $(\alpha,\beta)\in \mathrm{Im}K$, there is an isomorphism
$\gamma\in \mathrm{Aut}_{\mathfrak h}(\hat{\mathfrak g})$, such that
 Eq.~(\ref{W3}) holds. Combining Eq.~(\ref{W0}) and $\mathrm{Im}i=\mathrm{Ker }p$, we obtain
 $\alpha p=p\gamma s p=p\gamma(I_{\hat{\mathfrak g}}-it)=p\gamma $ and $i\beta=\gamma i$. Hence, $(\alpha,\beta)$ is extensible
 with respect to the non-abelian extension $\mathcal{E}$. According to Theorem \ref{Eth1},
 $(\alpha,\beta)\in \mathrm{Ker} W$. In all, $\mathrm{Ker} W=\mathrm{Im}K$.
\end{proof}
Let
$\mathcal{E}:0\longrightarrow\mathfrak h\stackrel{i}{\longrightarrow}
\hat{\mathfrak g}\stackrel{p}{\longrightarrow}\mathfrak g\longrightarrow0$
be a non-abelian extension of $\mathfrak g$ by $\mathfrak h$  with a section $s$ of $p$. Suppose that
$(\chi,\omega,\mu,\theta,D,\rho,T)$ is a non-abelian (2,3)-cocycle induced by the section $s$.

Denote
\begin{align}
		Z_{nab}^{1}(\mathfrak g,\mathfrak h)=&\left\{\varphi:\mathfrak g\rightarrow \mathfrak h\left|\begin{aligned}&[a,\varphi(x)]_{\mathfrak
     h}=\{a,b,\varphi(x)\}_{\mathfrak
     h}=\{\varphi(x),a,b\}_{\mathfrak h}=0,\\&\{\varphi(x),\varphi(y),a\}_{\mathfrak
     h}-\rho(x)(\varphi(y),a)+\rho(y)(\varphi(x),a)=0,
     \\&\{a,\varphi(x),\varphi(y)\}_{\mathfrak h}-T(y)(a,\varphi(x))+\rho(x)(a,\varphi(y))=0,\\&
    \mu(x)\varphi(y)-\mu(y)\varphi(x)= \varphi([x,y]_{\mathfrak g})+[\varphi(x),\varphi(y)]_{\mathfrak h},~
    \\&T(z)(\varphi(x),\varphi(y))-\rho(y)(\varphi(x),\varphi(z))
     -\theta(y,z)\varphi(x)\\&+\rho(x)(\varphi(y),\varphi(z))+\theta(x,z)\varphi(y)
-D(x,y)\varphi(z)\\&=\{\varphi(x),\varphi(y),\varphi(z)\}_{\mathfrak
     h}-\varphi(\{x,y,z\}_{\mathfrak h}),~\forall~x,y,z\in {\mathfrak g},a,b\in {\mathfrak h}
     \end{aligned}\right.\right\}.\label{W5}
	\end{align}
It is easy to check that $Z_{nab}^{1}(\mathfrak g,\mathfrak h)$ is an abelian group, which is called a non-abelian 1-cocycle on $\mathfrak g$ with values in $\mathfrak h$.
  \begin{pro} \label{Wm4} With the above notations, we have
\begin{enumerate}[label=$(\roman*)$,leftmargin=15pt]
  \item The linear map $S:\mathrm{Ker} K\longrightarrow Z_{nab}^{1}(\mathfrak g,\mathfrak h)$ defined by
 \begin{equation}\label{W6}S(\gamma)(x)=\varphi_{\gamma}(x)=s(x)-\gamma s(x),~\forall~~\gamma\in \mathrm{Ker} K,~x \in \mathfrak g\end{equation} is
a homomorphism of groups.
\item $S$ is an isomorphism, that is, $\mathrm{Ker K}\simeq Z_{nab}^{1}(\mathfrak g,\mathfrak h)$.
\end{enumerate}
\end{pro}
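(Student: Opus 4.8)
The plan is to verify directly that the map $S$ in \eqref{W6} is a well-defined group homomorphism and then construct its inverse, thereby establishing the isomorphism. First I would check well-definedness: for $\gamma\in\mathrm{Ker}\,K$ we have $p\gamma s=I_{\mathfrak g}$ and $\gamma|_{\mathfrak h}=I_{\mathfrak h}$, so $p(s(x)-\gamma s(x))=x-x=0$, hence $\varphi_\gamma(x)=s(x)-\gamma s(x)\in\ker p=\mathfrak h$; thus $\varphi_\gamma:\mathfrak g\to\mathfrak h$ is a legitimate linear map. The substantive point is that $\varphi_\gamma$ lies in $Z_{nab}^1(\mathfrak g,\mathfrak h)$: since $\gamma$ is an automorphism of $\hat{\mathfrak g}$ fixing $\mathfrak h$ pointwise and covering $I_{\mathfrak g}$, it is precisely the $\gamma$ produced in Theorem~\ref{EC} with $(\alpha,\beta)=(I_{\mathfrak g},I_{\mathfrak h})$ and with associated linear map $\varphi=-\varphi_\gamma$ (note $\gamma(a+s(x))=a-\varphi_\gamma(x)+s(x)$ matches the formula $\gamma(\hat w)=\beta(a)-\varphi(x)+s\alpha(x)$ there). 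Specializing Eqs.~\eqref{Iam1}--\eqref{Iam7} to $\alpha=I_{\mathfrak g}$, $\beta=I_{\mathfrak h}$ yields exactly the seven defining relations of $Z_{nab}^1(\mathfrak g,\mathfrak h)$ in \eqref{W5} (the sign on $\varphi_\gamma$ is absorbed because each relation is homogeneous of the appropriate degree in $\varphi$). So $S$ is well-defined.

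Next I would verify that $S$ is a group homomorphism. For $\gamma_1,\gamma_2\in\mathrm{Ker}\,K$, write $\gamma_j(a+s(x))=a-\varphi_{\gamma_j}(x)+s(x)$. Composing, $\gamma_1\gamma_2(s(x))=\gamma_1(-\varphi_{\gamma_2}(x)+s(x))=-\varphi_{\gamma_2}(x)-\varphi_{\gamma_1}(x)+s(x)$, using that $\gamma_1$ fixes $\mathfrak h$ pointwise; hence $\varphi_{\gamma_1\gamma_2}=\varphi_{\gamma_1}+\varphi_{\gamma_2}$, i.e. $S(\gamma_1\gamma_2)=S(\gamma_1)+S(\gamma_2)$ in the abelian group $Z_{nab}^1(\mathfrak g,\mathfrak h)$. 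Injectivity of $S$ is immediate: if $\varphi_\gamma=0$ then $\gamma s(x)=s(x)$ for all $x$ and $\gamma|_{\mathfrak h}=I_{\mathfrak h}$, so $\gamma(a+s(x))=a+s(x)$, i.e. $\gamma=I_{\hat{\mathfrak g}}$.

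For surjectivity, given $\varphi\in Z_{nab}^1(\mathfrak g,\mathfrak h)$ I would define $\gamma:\hat{\mathfrak g}\to\hat{\mathfrak g}$ by $\gamma(a+s(x))=a-\varphi(x)+s(x)$, which is well-defined and bijective (its inverse is $a+s(x)\mapsto a+\varphi(x)+s(x)$). That $\gamma$ is a Lie-Yamaguti algebra homomorphism is exactly the computation carried out in the ``conversely'' direction of Theorem~\ref{EC} with $(\alpha,\beta)=(I_{\mathfrak g},I_{\mathfrak h})$: the bracket and triple-bracket identities $\gamma([\hat w_1,\hat w_2])=[\gamma\hat w_1,\gamma\hat w_2]$ and $\gamma(\{\hat w_1,\hat w_2,\hat w_3\})=\{\gamma\hat w_1,\gamma\hat w_2,\gamma\hat w_3\}$ reduce, after expanding via \eqref{C0}--\eqref{C3}, precisely to the relations defining $Z_{nab}^1(\mathfrak g,\mathfrak h)$, which hold by hypothesis. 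Moreover $K(\gamma)=(p\gamma s,\gamma|_{\mathfrak h})=(I_{\mathfrak g},I_{\mathfrak h})$, so $\gamma\in\mathrm{Ker}\,K$ and $S(\gamma)=\varphi$. The main obstacle is purely bookkeeping: matching up the multilinear identities in \eqref{W5} term-by-term with the homomorphism conditions, being careful with the signs and with which arguments land in $\mathfrak g$ versus $\mathfrak h$; but since Theorem~\ref{EC} already does the general case, no new idea is needed and I would simply invoke its specialization rather than redo the expansion.
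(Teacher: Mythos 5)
Your proposal is correct and follows essentially the same route as the paper: well-definedness of $S$ reduces to the relations in \eqref{W5} (which the paper checks by expanding one identity directly and you obtain by specializing Theorem~\ref{EC} to $(\alpha,\beta)=(I_{\mathfrak g},I_{\mathfrak h})$), and the homomorphism property, injectivity, and the surjectivity construction $\gamma(a+s(x))=a-\varphi(x)+s(x)$ with an appeal to the converse part of Theorem~\ref{EC} are identical to the paper's. One small slip: the specialization of Theorem~\ref{EC} gives $\varphi=s-\gamma s=+\varphi_\gamma$, not $-\varphi_\gamma$, and the relations in \eqref{W5} are \emph{not} homogeneous in $\varphi$ (they mix linear, quadratic and cubic terms), so a sign could not simply be ``absorbed''; fortunately the correct sign is exactly the one your own matching of $\gamma(a+s(x))=a-\varphi_\gamma(x)+s(x)$ against $\gamma(\hat w)=\beta(a)-\varphi(x)+s\alpha(x)$ produces, so the argument stands as written.
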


\begin{proof}
\begin{enumerate}[label=$(\roman*)$,leftmargin=15pt]
	\item
	By Eqs.~(\ref{C1})-(\ref{C3}), (\ref{W3}) and (\ref{W6}), for all $x,y,z\in \mathfrak g$, we have,
\begin{align*}&\{\varphi_{\gamma}(x),\varphi_{\gamma}(y),\varphi_{\gamma}(z)\}_{\mathfrak h}-T(z)(\varphi_{\gamma}(x),\varphi_{\gamma}(y))+\rho(y)(\varphi_{\gamma}(x),\varphi_{\gamma}(z))+
\theta(y,z)\varphi_{\gamma}(x)\\&-\rho(x)(\varphi_{\gamma}(y),\varphi_{\gamma}(z))-\theta(x,z)\varphi_{\gamma}(y)
+D(x,y)\varphi_{\gamma}(z)-\varphi_{\gamma}(\{x,y,z\}_{\mathfrak g})
\\=&\{s(x)-\gamma s(x),s(y)-\gamma s(y),s(z)-\gamma s(z)\}_{\hat{\mathfrak g}}-\{s(x)-\gamma s(x),s(y)-\gamma s(y),s(z)\}_{\hat{\mathfrak g}}
\\&+\{s(y),s(x)-\gamma s(x),s(z)-\gamma s(z)\}_{\hat{\mathfrak g}}+\{s(x)-\gamma s(x),s(y),s(z)\}_{\hat{\mathfrak g}}
-\{s(x),s(y)-\gamma s(y),s(z)-\gamma s(z)\}_{\hat{\mathfrak g}}\\&-\{s(y)-\gamma s(y),s(x),s(z)\}_{\hat{\mathfrak g}}
+\{s(x),s(y),s(z)-\gamma s(z)\}_{\hat{\mathfrak g}}+\gamma s(\{x,y,z\}_{\mathfrak g})-s(\{x,y,z\}_{\mathfrak g})
\\=&\gamma s(\{x,y,z\}_{\mathfrak g})-\{\gamma s(x),\gamma s(y),\gamma s(z)\}_{\hat{\mathfrak g}}+\{s(x),s(y),s(z)\}_{\hat{\mathfrak g}}-s(\{x,y,z\}_{\mathfrak g})
\\=&\omega(x, y,z)-\gamma \omega(x, y,z)
\\=&0.
\end{align*}
Analogously, we can check that $\varphi_{\gamma}$ satisfies the other identities in $ Z_{nab}^{1}(\mathfrak g,\mathfrak h)$.
Thus, $S$ is well-defined.
For any $\gamma_1,\gamma_2\in \mathrm{Ker} K$ and $x\in \mathfrak g$, suppose $S(\gamma_1)=\varphi_{\gamma_1}$ and $S(\gamma_2)=\varphi_{\gamma_2}$.
By Eqs.~ (\ref{W3}) and (\ref{W6}), we have
\begin{align*}S(\gamma_1 \gamma_2)(x)&=s(x)-\gamma_1 \gamma_2s(x)
\\&=s(x)-\gamma_1(s(x)-\varphi_{\gamma_2}(x))
\\&=s(x)-\gamma_1s(x)+\gamma_{1}\varphi_{\gamma_2}(x)
\\&=\varphi_{\gamma_1}(x)+\varphi_{\gamma_2}(x)\end{align*}
which means that $S(\gamma_1 \gamma_2)=S(\gamma_1)+S( \gamma_2)$ is a homomorphism of groups.

\item For all $\gamma\in \mathrm{Ker}K$, we obtain that $K(\gamma)=(p\gamma s,\gamma|_{\mathfrak h})=(I_\mathfrak g,I_\mathfrak h)$.
 If $S(\gamma)=\varphi_{\gamma}=0$,
we can get $\varphi_{\gamma}(x)=s(x)-\gamma s(x)=0$, that is, $\gamma=I_{\hat{\mathfrak g}}$, which indicates that $S$ is injective.
Secondly, we prove that $S$ is surjective. Since $s$ is a section of $p$, all $\hat{x}\in \hat{\mathfrak g}$ can be written as $a+s(x)$ for
 some $a\in \mathfrak h, x\in \mathfrak g$.
 For any $\varphi\in Z_{nab}^{1}(\mathfrak g,\mathfrak h)$, define a linear map $\gamma:\hat{\mathfrak g}\rightarrow \hat{\mathfrak g}$ by
  \begin{equation}\label{W7}\gamma(\hat{x})=\gamma(a+s(x))=s(x)-\varphi(x)+a,~\forall~\hat{x}\in \hat{\mathfrak g}.\end{equation}
It is obviously that $(p\gamma s,\gamma|_{\mathfrak h})=(I_\mathfrak g,I_\mathfrak h)$.
We need to verify that $\gamma $ is an automorphism of Lie-Yamaguti algebra $\hat{\mathfrak g}$. One can take the same
procedure as the proof of the converse part of Theorem \ref{EC}.
 It follows that $\gamma\in \mathrm{Ker} K$.
Thus, $S$ is surjective. In all, $S$ is bijective.
 So $\mathrm{Ker }K\simeq Z_{nab}^{1}(\mathfrak g,\mathfrak h)$.
 \end{enumerate}
\end{proof}

Combining Theorem \ref{Wm3} and Proposition \ref{Wm4}, we have

\begin{thm} \label{Wm5} Let
$\mathcal{E}:0\longrightarrow\mathfrak h\stackrel{i}{\longrightarrow}
\hat{\mathfrak g}\stackrel{p}{\longrightarrow}\mathfrak g\longrightarrow0$
be a non-abelian extension of $\mathfrak g$ by $\mathfrak h$. There is an exact sequence:
$$0\longrightarrow Z_{nab}^{1}(\mathfrak g,\mathfrak h)\stackrel{i}{\longrightarrow} \mathrm{Aut}_{\mathfrak h}(\hat{\mathfrak g})\stackrel{K}{\longrightarrow}\mathrm{Aut}(\mathfrak g)\times \mathrm{Aut}(\mathfrak h)\stackrel{W}{\longrightarrow} H^{(2,3)}_{nab}(\mathfrak g,\mathfrak h).$$
\end{thm}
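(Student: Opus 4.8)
The plan is to assemble the exact sequence of Theorem~\ref{Wm5} by splicing together the two results just established: Theorem~\ref{Wm3}, which gives the exact sequence
$$1\longrightarrow \mathrm{Aut}_{\mathfrak h}^{\mathfrak g}(\hat{\mathfrak g})\stackrel{H}{\longrightarrow} \mathrm{Aut}_{\mathfrak h}(\hat{\mathfrak g})\stackrel{K}{\longrightarrow}\mathrm{Aut}(\mathfrak g)\times \mathrm{Aut}(\mathfrak h)\stackrel{W}{\longrightarrow} H^{(2,3)}_{nab}(\mathfrak g,\mathfrak h),$$
and Proposition~\ref{Wm4}, which provides a group isomorphism $S:\mathrm{Ker}\,K=\mathrm{Aut}_{\mathfrak h}^{\mathfrak g}(\hat{\mathfrak g})\xrightarrow{\ \sim\ } Z_{nab}^{1}(\mathfrak g,\mathfrak h)$. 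The point is simply that $\mathrm{Ker}\,K = \mathrm{Aut}_{\mathfrak h}^{\mathfrak g}(\hat{\mathfrak g})$ by definition of the latter, so the inclusion $H$ in Theorem~\ref{Wm3} is precisely the inclusion of $\mathrm{Ker}\,K$ into $\mathrm{Aut}_{\mathfrak h}(\hat{\mathfrak g})$.

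First I would invoke Proposition~\ref{Wm4}(ii) to identify $\mathrm{Aut}_{\mathfrak h}^{\mathfrak g}(\hat{\mathfrak g})$ with $Z_{nab}^{1}(\mathfrak g,\mathfrak h)$ via the group isomorphism $S$. Under this identification, composing $S^{-1}$ with the map $H$ of Theorem~\ref{Wm3} yields a group monomorphism $Z_{nab}^{1}(\mathfrak g,\mathfrak h)\hookrightarrow \mathrm{Aut}_{\mathfrak h}(\hat{\mathfrak g})$, which is exactly the map labelled $i$ in the statement: explicitly, $\varphi\mapsto \gamma$ where $\gamma(a+s(x))=s(x)-\varphi(x)+a$, as described in the proof of Proposition~\ref{Wm4}(ii). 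The image of this composite map equals $\mathrm{Im}\,H = \mathrm{Ker}\,K$, since $S$ is an isomorphism onto $Z_{nab}^{1}(\mathfrak g,\mathfrak h)$ and $H$ is injective with image $\mathrm{Ker}\,K$.

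Next I would verify exactness at each node. Exactness at $Z_{nab}^{1}(\mathfrak g,\mathfrak h)$ is the injectivity of $i$, which is immediate since both $S^{-1}$ and $H$ are injective (equivalently, $\mathrm{Ker}\,K$ contains no nontrivial element killed by the inclusion). Exactness at $\mathrm{Aut}_{\mathfrak h}(\hat{\mathfrak g})$ says $\mathrm{Im}\,i = \mathrm{Ker}\,K$, which holds by the identification of $\mathrm{Im}\,i$ with $\mathrm{Im}\,H = \mathrm{Ker}\,K$ established in the previous step. Exactness at $\mathrm{Aut}(\mathfrak g)\times \mathrm{Aut}(\mathfrak h)$ says $\mathrm{Im}\,K = \mathrm{Ker}\,W$, which is precisely the content of Theorem~\ref{Wm3} (proved there via Theorem~\ref{Eth1}). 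Finally, that $W$ is well-defined and independent of the section is Proposition~\ref{Wm1}, and that $K$ is a group homomorphism is Proposition~\ref{Wm2}.

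I do not expect a genuine obstacle here: this theorem is a bookkeeping consolidation of earlier results, and the only mild care needed is to check that the abstract inclusion $H$ of Theorem~\ref{Wm3}, after transport along $S^{-1}$, really coincides with the concrete map $i$ given by $\varphi\mapsto(a+s(x)\mapsto s(x)-\varphi(x)+a)$ — which is just unwinding the definition of $S$ in \eqref{W6} and \eqref{W7}. Once that matching is recorded, exactness of the four-term sequence follows termwise from the facts cited above.
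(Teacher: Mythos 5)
Your proposal is correct and is exactly the paper's argument: the paper derives Theorem~\ref{Wm5} by combining Theorem~\ref{Wm3} with the isomorphism $S:\mathrm{Ker}\,K=\mathrm{Aut}_{\mathfrak h}^{\mathfrak g}(\hat{\mathfrak g})\simeq Z_{nab}^{1}(\mathfrak g,\mathfrak h)$ of Proposition~\ref{Wm4}, just as you do. Your explicit check that the transported inclusion coincides with $\varphi\mapsto\bigl(a+s(x)\mapsto s(x)-\varphi(x)+a\bigr)$ is a slightly more careful unwinding than the paper records, but the route is the same.
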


\section{Particular case: abelian extensions of Lie-Yamaguti algebras}
 In this section, we discuss the results of previous section in particular case.

We fix the abelian extension
$\mathcal{E}:0\longrightarrow\mathfrak h\stackrel{i}{\longrightarrow} \hat{\mathfrak g}\stackrel{p}{\longrightarrow}\mathfrak g\longrightarrow0$
of $\mathfrak g$ by $\mathfrak h$ with a section $s$ of $p$.
Assume that $(\chi,\omega)$ is a (2,3)-cocycle corresponding to $\mathcal{E}$.
In the case of abelian extensions $\mathcal{E}$, the maps $\rho, T$ defined by (\ref{C3}) become to zero. Then
the quadruple $(\mathfrak h,\mu,\theta,D)$ given by Eq.~(\ref{C2}) is a representation of $\mathfrak g$ \cite{31}. Moreover,

\begin{thm}[\cite{31}]
	\begin{enumerate}[label=$(\roman*)$,leftmargin=15pt]
	\item The triple $(\mathfrak g\oplus \mathfrak h,[ \ , \  ]_{\chi},\{ \ , \ , \  \}_{\omega})$ is a Lie-Yamaguti algebra
 if and only if $(\chi,\omega)$ is a (2,3)-cocycle of $\mathfrak g$ with coefficients in the
representation $(\mathfrak h,\mu,\theta,D)$.

\item Abelian extensions of $\mathfrak g$
 by $\mathfrak h$ are classified
by the cohomology group $H^{(2,3)}(\mathfrak g,\mathfrak h)$ of
$\mathfrak g$ with coefficients in $(\mathfrak h,\mu,\theta,D)$.
\end{enumerate}
\end{thm}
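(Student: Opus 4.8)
The plan is to deduce both statements from the non-abelian theory of Section~3 by specializing to the case in which the ingredient $\mathfrak h$ is an abelian ideal. The first observation is that the non-abelian $(2,3)$-cocycle attached to a section degenerates: since $[\cdot,\cdot]_{\mathfrak h}=0$ and $\{\cdot,\cdot,\cdot\}_{\mathfrak h}=0$, and $\rho_s(x)(a,b)=\{s(x),a,b\}_{\hat{\mathfrak g}}\in\{\mathfrak g,\mathfrak h,\mathfrak h\}=0$, $T_s(x)(a,b)=\{a,b,s(x)\}_{\hat{\mathfrak g}}\in\{\mathfrak h,\mathfrak h,\mathfrak g\}=0$ by $(\ref{C2})$--$(\ref{C3})$, the septuple collapses to $(\chi,\omega,\mu,\theta,D,0,0)$, and the twisted operations $(\ref{NLts0})$--$(\ref{NLts})$ reduce to $[x+a,y+b]_{\chi}=[x,y]_{\mathfrak g}+\chi(x,y)+\mu(x)b-\mu(y)a$ and $\{x+a,y+b,z+c\}_{\omega}=\{x,y,z\}_{\mathfrak g}+\omega(x,y,z)+D(x,y)c+\theta(y,z)a-\theta(x,z)b$, that is, to the classical operations building the extension associated with the representation $(\mathfrak h,\mu,\theta,D)$ and the pair $(\chi,\omega)$.

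For part $(i)$ I would apply Proposition~\ref{LY}: $(\mathfrak g\oplus\mathfrak h,[\cdot,\cdot]_{\chi},\{\cdot,\cdot,\cdot\}_{\omega})$ is a Lie-Yamaguti algebra if and only if $(\chi,\omega,\mu,\theta,D,0,0)$ satisfies the identities $(\ref{L00})$--$(\ref{L35})$. Substituting $\rho=T=0$ and the trivial operations on $\mathfrak h$, I would sort these into three groups. Those involving neither $\chi$ nor $\omega$ become trivial identities or exactly the representation axioms $(\ref{eq2.9})$--$(\ref{eq2.7})$ together with their consequences $(\ref{eq2.8})$--$(\ref{eq2.71})$, hence hold since $(\mathfrak h,\mu,\theta,D)$ is a representation. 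Those in which $\chi$ or $\omega$ occurs only inside a bracket on $\mathfrak h$, such as $[\chi(\cdot,\cdot),\cdot]_{\mathfrak h}$, $\{\chi(\cdot,\cdot),\cdot,\cdot\}_{\mathfrak h}$ or $\rho(\cdot)(\chi(\cdot,\cdot),\cdot)$, lose those terms and again reduce to representation axioms. The remaining identities, genuinely affine in $(\chi,\omega)$, reassemble precisely into $\delta_I(\chi,\omega)=0$, $\delta_{II}(\chi,\omega)=0$, $\delta_I^{*}(\chi,\omega)=0$, $\delta_{II}^{*}(\chi,\omega)=0$, i.e.\ into $(\chi,\omega)\in Z^{(2,3)}(\mathfrak g,\mathfrak h)$; concretely, the ``all entries in $\mathfrak g$'' specializations of $(\ref{eq2.3})$, $(\ref{eq2.4})$, $(\ref{eq2.5})$, $(\ref{eq2.6})$ yield, in order, $\delta_I^{*}(\chi,\omega)=0$, $\delta_{II}^{*}(\chi,\omega)=0$, $\delta_I(\chi,\omega)=0$ and $\delta_{II}(\chi,\omega)=0$. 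A fully self-contained variant would avoid quoting the list $(\ref{L00})$--$(\ref{L35})$ and instead expand $(\ref{eq2.1})$--$(\ref{eq2.6})$ for $[\cdot,\cdot]_{\chi},\{\cdot,\cdot,\cdot\}_{\omega}$ directly, as in the proof of Proposition~\ref{LY}, observing that every case with at least one argument in $\mathfrak h$ produces a representation axiom (the $\chi,\omega$-terms being absorbed into vanishing $\mathfrak h$-brackets) and every case with all arguments in $\mathfrak g$ produces a component of $\delta(\chi,\omega)=\delta^{*}(\chi,\omega)=0$.

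For part $(ii)$ I would push the same specialization through the non-abelian classification. In the abelian case the equivalence of non-abelian $(2,3)$-cocycles given by $(\ref{E1})$--$(\ref{E6})$ reduces, for a linear map $\varphi$, to $\mu_1=\mu_2$, $\theta_1=\theta_2$, $D_1=D_2$ (so the underlying representation is preserved), $\rho_1=\rho_2=T_1=T_2=0$, and $\chi_1-\chi_2=-\delta_I\varphi$, $\omega_1-\omega_2=-\delta_{II}\varphi$; thus two abelian $(2,3)$-cocycles over the fixed representation $(\mathfrak h,\mu,\theta,D)$ are equivalent exactly when they differ by a $(2,3)$-coboundary. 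Since an extension is abelian precisely when the non-abelian cocycle induced by some (equivalently, any) section has $\rho_s=T_s=0$ and trivial operations on $\mathfrak h$, restricting the bijection $\mathcal{E}_{nab}(\mathfrak g,\mathfrak h)\simeq H_{nab}^{(2,3)}(\mathfrak g,\mathfrak h)$ to this class identifies equivalence classes of abelian extensions realizing $(\mathfrak h,\mu,\theta,D)$ with $Z^{(2,3)}(\mathfrak g,\mathfrak h)/B^{(2,3)}(\mathfrak g,\mathfrak h)=H^{(2,3)}(\mathfrak g,\mathfrak h)$, Lemma~\ref{Le1} ensuring the restricted correspondence is well defined on equivalence classes.

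The one genuinely delicate point is the bookkeeping in part $(i)$: one must check that the affine-in-$(\chi,\omega)$ fragments of $(\ref{L00})$--$(\ref{L35})$ recombine into exactly the four equations cutting out $Z^{(2,3)}(\mathfrak g,\mathfrak h)$, with nothing lost and nothing extraneous imposed, and that every other occurrence of $\chi$ or $\omega$ is indeed paired with an $\mathfrak h$-bracket and hence drops out. Because the hard non-abelian computations are already available, this amounts to term-by-term matching against the explicit formulas for $\delta$ and $\delta^{*}$ in Section~2, which is routine.
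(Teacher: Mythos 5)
Your proposal is correct in substance, but note that the paper does not actually prove this statement at all: it is quoted verbatim from \cite{31} (Zhang--Li) and used as known background in Section~7, whose stated purpose is merely to ``discuss the results of the previous section in the particular case'' of abelian extensions. What you have done is supply the proof the paper omits, by specializing Proposition~\ref{LY} and the classification $\mathcal{E}_{nab}(\mathfrak g,\mathfrak h)\simeq H^{(2,3)}_{nab}(\mathfrak g,\mathfrak h)$ to the case $\rho=T=0$ with trivial operations on $\mathfrak h$. This is a legitimate and arguably preferable route: it makes Section~7 self-contained and exhibits the abelian theory as a genuine corollary of the non-abelian one, exactly in the spirit of the paper. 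Your identification of the all-$\mathfrak g$ specializations of Eqs.~(\ref{eq2.3})--(\ref{eq2.6}) with $\delta_I^{*}=\delta_{II}^{*}=\delta_I=\delta_{II}=0$ checks out against the explicit coboundary formulas in Section~2, and your reduction of the equivalence relations (\ref{E1})--(\ref{E6}) to $\chi_1-\chi_2=-\delta_I\varphi$, $\omega_1-\omega_2=-\delta_{II}\varphi$ with the representation data preserved is exactly right. One practical remark: your ``self-contained variant'' (expanding Eqs.~(\ref{eq2.1})--(\ref{eq2.6}) directly rather than matching against the list (\ref{L00})--(\ref{L35})) is the safer of your two options, since at least one entry of that list --- Eq.~(\ref{L15}), which as printed omits the terms $\omega([x,y]_{\mathfrak g},z,w)+\omega([y,z]_{\mathfrak g},x,w)+\omega([z,x]_{\mathfrak g},y,w)$ that the direct expansion of Eq.~(\ref{eq2.4}) produces and that $\delta_{II}^{*}$ requires --- appears to contain a typographical omission; your direct computation recovers the correct identity.
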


\begin{thm} Let $\mathcal{E}:0\longrightarrow\mathfrak h\stackrel{i}{\longrightarrow} \hat{\mathfrak g}
\stackrel{p}{\longrightarrow}\mathfrak g\longrightarrow0$ be an abelian extension of $\mathfrak g$
by $\mathfrak h$ with a section $s$ of $p$. Assume that $(\chi,\omega)$ is a (2,3)-cocycle
and $(\mathfrak h,\mu,\theta,D)$ is a representation of $\mathfrak g$
associated to $\mathcal{E}$. A pair $(\alpha,\beta)\in \mathrm{Aut}(\mathfrak g
)\times \mathrm{Aut}(\mathfrak h)$ is extensible with respect to the abelian extension $\mathcal{E}$ if and only if there is a
linear map $\varphi:\mathfrak g\longrightarrow \mathfrak h$
satisfying the following conditions:
\begin{align}
     \beta\omega(x,y,z)-\omega(\alpha(x),\alpha(y),\alpha(z))=&\nonumber
\theta(\alpha(x),\alpha(z))\varphi(y)-\theta(\alpha(y),\alpha(z))\varphi(x)
\\&\label{AEE1}-D(\alpha(x),\alpha(y))\varphi(z)+\varphi(\{x,y,z\}_{\mathfrak
     h}),
\end{align}
\begin{equation}\label{AEE2}
     \beta \chi(x,y)-\chi(\alpha(x),\alpha(y))=
    \mu(\alpha(y))\varphi(x)- \mu(\alpha(x))\varphi(y)+\varphi([x,y]_{\mathfrak g}),
\end{equation}
\begin{equation}\label{AEE3}
     \beta(\theta(x,y)a)=\theta(\alpha(x),\alpha(y))\beta(a),~~\beta \mu(x)a=\mu(\alpha(x))\beta(a).
\end{equation}

\end{thm}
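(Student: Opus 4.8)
The plan is to obtain this statement as the specialization of Theorem~\ref{EC} to an abelian extension. First I would record the simplifications forced by abelianness: since $\mathfrak{h}$ is an abelian ideal of $\hat{\mathfrak g}$, one has $[a,b]_{\mathfrak h}=0$ and $\{a,b,c\}_{\mathfrak h}=0$ for all $a,b,c\in\mathfrak h$, and moreover $\rho(x)(a,b)=\{s(x),a,b\}_{\hat{\mathfrak g}}\in\{\hat{\mathfrak g},\mathfrak h,\mathfrak h\}=0$ and $T(x)(a,b)=\{a,b,s(x)\}_{\hat{\mathfrak g}}\in\{\mathfrak h,\mathfrak h,\hat{\mathfrak g}\}=0$ by the formulas in Eqs.~(\ref{C2})--(\ref{C3}). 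Hence the non-abelian $(2,3)$-cocycle induced by $s$ reduces to the ordinary $(2,3)$-cocycle $(\chi,\omega)$ with coefficients in the representation $(\mathfrak h,\mu,\theta,D)$, as already noted before the previous theorem.

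Next I would substitute $\rho=T=0$, $[\cdot,\cdot]_{\mathfrak h}=0$ and $\{\cdot,\cdot,\cdot\}_{\mathfrak h}=0$ into the seven conditions (\ref{Iam1})--(\ref{Iam7}) of Theorem~\ref{EC}. Under these substitutions, (\ref{Iam1}) becomes exactly (\ref{AEE1}); (\ref{Iam2}) becomes exactly (\ref{AEE2}); (\ref{Iam3}) becomes $\beta(\theta(x,y)a)=\theta(\alpha(x),\alpha(y))\beta(a)$ and (\ref{Iam7}) becomes $\beta(\mu(x)a)=\mu(\alpha(x))\beta(a)$, which together constitute (\ref{AEE3}); and (\ref{Iam5}), (\ref{Iam6}) become the vacuous identity $0=0$. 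Thus the only condition of Theorem~\ref{EC} not immediately accounted for by (\ref{AEE1})--(\ref{AEE3}) is (\ref{Iam4}), which after substitution reads $\beta(D(x,y)a)=D(\alpha(x),\alpha(y))\beta(a)$.

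The single step requiring a genuine (though short) argument, and the main obstacle of the proof, is to show that this remaining identity is an automatic consequence of (\ref{AEE3}) together with $\alpha\in\mathrm{Aut}(\mathfrak g)$, so that it need not be imposed separately. For this I would invoke Eq.~(\ref{eq2.7}) for the representation $(\mathfrak h,\mu,\theta,D)$ to write $D(x,y)=\theta(y,x)-\theta(x,y)-\mu([x,y]_{\mathfrak g})+[\mu(x),\mu(y)]$, apply $\beta$ to $D(x,y)a$, move $\beta$ past $\theta$, $\mu$ and the commutator $[\mu(x),\mu(y)]$ using (\ref{AEE3}), use $\alpha([x,y]_{\mathfrak g})=[\alpha(x),\alpha(y)]_{\mathfrak g}$, and then reassemble the right-hand side via Eq.~(\ref{eq2.7}) evaluated at $(\alpha(x),\alpha(y))$ to identify it with $D(\alpha(x),\alpha(y))\beta(a)$.

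Finally I would assemble the two implications. For the forward direction, Theorem~\ref{EC} provides a linear map $\varphi:\mathfrak g\to\mathfrak h$ satisfying (\ref{Iam1})--(\ref{Iam7}), whose abelian reductions are precisely (\ref{AEE1})--(\ref{AEE3}). For the converse, given $\varphi$ satisfying (\ref{AEE1})--(\ref{AEE3}), one recovers all of (\ref{Iam1})--(\ref{Iam7}) in their abelian form---(\ref{Iam4}) by the computation above, (\ref{Iam5}) and (\ref{Iam6}) trivially---and then Theorem~\ref{EC} yields the extensibility of $(\alpha,\beta)$. Beyond checking that (\ref{Iam4}) is redundant, no real difficulty is expected; the argument is a routine specialization.
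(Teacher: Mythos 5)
Your proposal is correct and follows the same route as the paper, which simply specializes Theorem~\ref{EC} to the abelian case; you rightly identify that the only non-trivial point is the redundancy of the reduced form of (\ref{Iam4}), namely $\beta D(x,y)a=D(\alpha(x),\alpha(y))\beta(a)$, and your derivation of it from (\ref{AEE3}) via Eq.~(\ref{eq2.7}) is valid (the paper records the same fact in the remark immediately after the theorem, citing Eq.~(\ref{eq2.13}) instead, but your use of Eq.~(\ref{eq2.7}) is the cleaner and more direct justification).
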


\begin{proof}
It can be obtained directly from Theorem \ref{EC}.
\end{proof}
By Eqs.~(\ref{eq2.13}) and (\ref{AEE3}), we obtain
\begin{equation*}\beta D(x,y)a=D(\alpha(x),\alpha(y))\beta(a).
\end{equation*}

For all $(\alpha,\beta)\in \mathrm{Aut}(\mathfrak g
)\times \mathrm{Aut}(\mathfrak h)$, $(\chi_{(\alpha,\beta)},\omega_{(\alpha,\beta)})$ may not be a (2,3)-cocycle.
 Indeed,  $(\chi_{(\alpha,\beta)},\omega_{(\alpha,\beta)})$ is a (2,3)-cocycle if Eq.~(\ref{AEE3}) holds.
 Thus, it is natural to introduce the space of compatible pairs of automorphisms:
\begin{align*}
		C_{(\mu,\theta)}=&\left\{(\alpha,\beta)\in \mathrm{Aut}(\mathfrak g
)\times \mathrm{Aut}(\mathfrak h)\left|\begin{aligned}&\beta(\theta(x,y)a)=\theta(\alpha(x),\alpha(y))\beta(a),
\\&\beta \mu(x)a=\mu(\alpha(x))\beta(a),~\forall~x,y\in {\mathfrak g},a\in {\mathfrak h}
     \end{aligned}\right.\right\}.
	\end{align*}
More detail on the space of compatible pairs of automorphisms can be found in \cite{021}.

Analogous to Theorem \ref{Eth1}, we get

\begin{thm}[\cite{021}]\label{Wm6} Let $\mathcal{E}:0\longrightarrow\mathfrak h\stackrel{i}{\longrightarrow} \hat{\mathfrak g}
\stackrel{p}{\longrightarrow}\mathfrak g\longrightarrow0$ be an abelian extension of $\mathfrak g$
by $\mathfrak h$ with a section $s$ of $p$ and $(\chi,\omega)$ be a
 (2,3)-cocycle associated to $\mathcal{E}$. A pair $(\alpha,\beta)\in C_{(\mu,\theta)}$ is extensible with
  respect to the abelian extension
$\mathcal{E}$ if and only if  $(\chi,\omega)$ and $(\chi_{(\alpha,\beta)},\omega_{(\alpha,\beta)})$ are in the same cohomological class.
\end{thm}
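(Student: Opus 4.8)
The plan is to reduce the statement to Theorem \ref{Eth1} by specializing all the non-abelian data to the abelian situation. Recall that for an abelian extension the maps $\rho$ and $T$ defined by \eqref{C3} vanish and $\mathfrak h$ is an abelian ideal of $\hat{\mathfrak g}$, so $[\ ,\ ]_{\mathfrak h}=0$ and $\{\ ,\ ,\ \}_{\mathfrak h}=0$. Hence the non-abelian (2,3)-cocycle $(\chi,\omega,\mu,\theta,D,\rho,T)$ induced by $s$ degenerates to the pair $(\chi,\omega)$ together with the representation $(\mathfrak h,\mu,\theta,D)$ of $\mathfrak g$, the identities \eqref{L00}--\eqref{L35} becoming exactly the conditions cutting out the (2,3)-cocycle space $Z^{(2,3)}(\mathfrak g,\mathfrak h)$, while the equivalence relations \eqref{E1}--\eqref{E6} simplify drastically once $\rho,T$ and the $\mathfrak h$-operations are dropped.

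First I would verify that the hypothesis $(\alpha,\beta)\in C_{(\mu,\theta)}$ is precisely what keeps the second cocycle meaningful. By definition of $C_{(\mu,\theta)}$ one has $\beta\mu(x)\beta^{-1}=\mu(\alpha(x))$ and $\beta\theta(x,y)\beta^{-1}=\theta(\alpha(x),\alpha(y))$, which when substituted into \eqref{Inc4} and \eqref{Inc3} give $\mu_{(\alpha,\beta)}=\mu$ and $\theta_{(\alpha,\beta)}=\theta$; formula \eqref{Inc5} gives $\rho_{(\alpha,\beta)}=T_{(\alpha,\beta)}=0$ because $\rho=T=0$; and feeding the two relations above into the representation identity \eqref{eq2.7}, which expresses $D$ through $\mu$ and $\theta$, yields $D_{(\alpha,\beta)}=D$ as well. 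Consequently $(\chi_{(\alpha,\beta)},\omega_{(\alpha,\beta)})$ is again a (2,3)-cocycle of $\mathfrak g$ with values in the same representation $(\mathfrak h,\mu,\theta,D)$, so the assertion that $(\chi,\omega)$ and $(\chi_{(\alpha,\beta)},\omega_{(\alpha,\beta)})$ lie in the same cohomology class of $H^{(2,3)}(\mathfrak g,\mathfrak h)$ is well posed.

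Next I would invoke Theorem \ref{Eth1}: $(\alpha,\beta)$ is extensible with respect to $\mathcal E$ if and only if the two non-abelian (2,3)-cocycles $(\chi,\omega,\mu,\theta,D,\rho,T)$ and $(\chi_{(\alpha,\beta)},\omega_{(\alpha,\beta)},\mu_{(\alpha,\beta)},\theta_{(\alpha,\beta)},D_{(\alpha,\beta)},\rho_{(\alpha,\beta)},T_{(\alpha,\beta)})$ are equivalent, i.e.\ there is a linear map $\psi:\mathfrak g\to\mathfrak h$ satisfying \eqref{E1}--\eqref{E6}. By the previous step the representation parts of the two cocycles agree, so with $\rho=T=0$, $[\ ,\ ]_{\mathfrak h}=0$ and $\{\ ,\ ,\ \}_{\mathfrak h}=0$ equations \eqref{E3}--\eqref{E6} become the trivial identity $0=0$, while \eqref{E1} and \eqref{E2} collapse to
\[
\chi_{(\alpha,\beta)}(x,y)-\chi(x,y)=-(\delta_{I}\psi)(x,y),\qquad
\omega_{(\alpha,\beta)}(x,y,z)-\omega(x,y,z)=-(\delta_{II}\psi)(x,y,z),
\]
i.e.\ $(\chi_{(\alpha,\beta)},\omega_{(\alpha,\beta)})-(\chi,\omega)\in B^{(2,3)}(\mathfrak g,\mathfrak h)$. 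Thus the equivalence of the two non-abelian (2,3)-cocycles is equivalent to $(\chi,\omega)$ and $(\chi_{(\alpha,\beta)},\omega_{(\alpha,\beta)})$ being cohomologous; reading the chain of equivalences backwards (a $\psi$ realizing such a coboundary relation automatically satisfies \eqref{E1}--\eqref{E6} once the representation parts coincide, hence yields extensibility by Theorem \ref{Eth1}) finishes the proof. One may also argue directly from the abelian extensibility criterion proved just above: for $(\alpha,\beta)\in C_{(\mu,\theta)}$ condition \eqref{AEE3} holds automatically, so extensibility amounts to the existence of a $\varphi$ satisfying \eqref{AEE1}--\eqref{AEE2}, and the substitution $x,y,z\mapsto\alpha^{-1}(x),\alpha^{-1}(y),\alpha^{-1}(z)$ together with $\psi=\varphi\circ\alpha^{-1}$ turns \eqref{AEE1}--\eqref{AEE2} into exactly the two coboundary relations displayed above.

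The only step that requires genuine care is the matching of the representation data, namely $\mu_{(\alpha,\beta)}=\mu$, $\theta_{(\alpha,\beta)}=\theta$ and $D_{(\alpha,\beta)}=D$: this is where the hypothesis $(\alpha,\beta)\in C_{(\mu,\theta)}$ is really used, since without it $(\chi_{(\alpha,\beta)},\omega_{(\alpha,\beta)})$ need not be a (2,3)-cocycle for $(\mathfrak h,\mu,\theta,D)$ and the conclusion cannot even be formulated. Everything else amounts to specializing calculations already performed in the non-abelian setting and keeping track of the sign and of the conjugating automorphism hidden in $\psi=\varphi\circ\alpha^{-1}$.
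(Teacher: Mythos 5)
Your proposal is correct and follows essentially the same route as the paper, which gives no explicit argument beyond the remark ``Analogous to Theorem \ref{Eth1}'': one specializes the non-abelian equivalence criterion to the abelian case, notes that $(\alpha,\beta)\in C_{(\mu,\theta)}$ forces $\mu_{(\alpha,\beta)}=\mu$, $\theta_{(\alpha,\beta)}=\theta$, $D_{(\alpha,\beta)}=D$ and $\rho_{(\alpha,\beta)}=T_{(\alpha,\beta)}=0$, so that Eqs.~(\ref{E3})--(\ref{E6}) trivialize and Eqs.~(\ref{E1})--(\ref{E2}) reduce to the coboundary conditions for $(\delta_I\psi,\delta_{II}\psi)$. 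Your verification of these reductions (including the use of Eq.~(\ref{eq2.7}) to transfer the compatibility from $\mu,\theta$ to $D$) supplies exactly the details the paper leaves implicit.
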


In the case of abelian extensions, $Z^{1}_{nab}(\mathfrak g,\mathfrak h)$
defined by (\ref{W5}) becomes to ${H}^{1}(\mathfrak g,\mathfrak h)$ given in Section 2.
 In the light of Theorem \ref{Wm5} and Theorem \ref{Wm6}, we have the following exact sequence:

\begin{thm}[\cite{021}] Let $\mathcal{E}:0\longrightarrow\mathfrak h\stackrel{i}{\longrightarrow} \hat{\mathfrak g}
\stackrel{p}{\longrightarrow}\mathfrak g\longrightarrow0$ be an abelian extension of $\mathfrak g$
by $\mathfrak h$. There is an exact sequence:
$$0\longrightarrow H^{1}(\mathfrak g,\mathfrak h)\stackrel{i}{\longrightarrow} \mathrm{Aut}_{\mathfrak h}(\hat{\mathfrak g})\stackrel{K}{\longrightarrow}C_{(\mu,\theta)}\stackrel{W}{\longrightarrow} H^{(2,3)}(\mathfrak g,\mathfrak h).$$
\end{thm}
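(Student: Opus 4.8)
The plan is to derive this exact sequence as the specialization of Theorem~\ref{Wm5} to an abelian extension, so the real content is identifying how the non-abelian data degenerate. The first step is to record that since $\mathfrak h$ is an abelian ideal of $\hat{\mathfrak g}$, the maps $\rho,T$ attached to $\mathcal E$ by Eq.~(\ref{C3}) vanish and every bracket $[a,b]_{\mathfrak h}$ and every triple product $\{a,b,c\}_{\mathfrak h}$, $\{a,b,x\}$, $\{a,x,b\}$, $\{x,a,b\}$ with $a,b,c\in\mathfrak h$ is zero. Substituting this into the defining relations (\ref{W5}) of $Z_{nab}^{1}(\mathfrak g,\mathfrak h)$ collapses all but two of them to triviality, and the two survivors become exactly $\delta_{I}\varphi=0$ and $\delta_{II}\varphi=0$ for the representation $(\mathfrak h,\mu,\theta,D)$ of Eq.~(\ref{C2}); hence $Z_{nab}^{1}(\mathfrak g,\mathfrak h)=H^{1}(\mathfrak g,\mathfrak h)$. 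Likewise, as recalled just before the statement, for $(\alpha,\beta)\in C_{(\mu,\theta)}$ the pair $(\chi_{(\alpha,\beta)},\omega_{(\alpha,\beta)})$ is again a linear $(2,3)$-cocycle, so $W(\alpha,\beta)$ is a well-defined element of the quotient group $H^{(2,3)}(\mathfrak g,\mathfrak h)$ rather than of $H^{(2,3)}_{nab}(\mathfrak g,\mathfrak h)$.

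Next I would check that the group homomorphism $K$ of Proposition~\ref{Wm2} in fact takes values in $C_{(\mu,\theta)}$. Given $\gamma\in\mathrm{Aut}_{\mathfrak h}(\hat{\mathfrak g})$, the pair $(\alpha,\beta)=K(\gamma)=(p\gamma s,\gamma|_{\mathfrak h})$ is extensible (by $\gamma$ itself), so Theorem~\ref{EC} supplies a linear map $\varphi\colon\mathfrak g\to\mathfrak h$ satisfying Eqs.~(\ref{Iam1})--(\ref{Iam7}); in the abelian case Eqs.~(\ref{Iam3}) and (\ref{Iam7}) shed all $\rho,T$ and $\mathfrak h$-product terms and reduce precisely to $\beta(\theta(x,y)a)=\theta(\alpha(x),\alpha(y))\beta(a)$ and $\beta\mu(x)a=\mu(\alpha(x))\beta(a)$, i.e.\ to $(\alpha,\beta)\in C_{(\mu,\theta)}$. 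Thus $K$ restricts to a homomorphism $\mathrm{Aut}_{\mathfrak h}(\hat{\mathfrak g})\to C_{(\mu,\theta)}$, and since every pair in $\mathrm{Im}\,K$ is extensible, Theorem~\ref{Wm6} gives $W\circ K=0$; so the claimed sequence is at least a complex.

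It remains to prove exactness term by term, and here everything is already available. Exactness at $H^{1}(\mathfrak g,\mathfrak h)$ and at $\mathrm{Aut}_{\mathfrak h}(\hat{\mathfrak g})$ is Proposition~\ref{Wm4}: the isomorphism $S\colon\mathrm{Ker}\,K\to Z_{nab}^{1}(\mathfrak g,\mathfrak h)=H^{1}(\mathfrak g,\mathfrak h)$ identifies $H^{1}(\mathfrak g,\mathfrak h)$ with $\mathrm{Ker}\,K$, which is the image of the first map $i$, and $i$ is injective. For exactness at $C_{(\mu,\theta)}$ I would repeat the argument of Theorem~\ref{Wm3}: a pair $(\alpha,\beta)\in C_{(\mu,\theta)}$ lies in $\mathrm{Ker}\,W$ iff $(\chi,\omega)$ and $(\chi_{(\alpha,\beta)},\omega_{(\alpha,\beta)})$ are cohomologous, iff $(\alpha,\beta)$ is extensible by Theorem~\ref{Wm6}, iff there is $\gamma\in\mathrm{Aut}_{\mathfrak h}(\hat{\mathfrak g})$ with $i\beta=\gamma i$ and $p\gamma=\alpha p$; and from $it+sp=I_{\hat{\mathfrak g}}$ (Eq.~(\ref{W0})) such a $\gamma$ satisfies $p\gamma s=\alpha p s=\alpha$ and $\gamma|_{\mathfrak h}=\beta$, so $K(\gamma)=(\alpha,\beta)\in\mathrm{Im}\,K$. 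Hence $\mathrm{Ker}\,W=\mathrm{Im}\,K$ and the sequence is exact.

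The proof is essentially bookkeeping layered on Theorems~\ref{EC}, \ref{Wm3}, \ref{Wm5}, \ref{Wm6} and Propositions~\ref{Wm2}, \ref{Wm4}; the one point that genuinely needs care — and the only place a computation is hidden — is verifying the two degenerations of the first paragraph, namely that switching off $\rho,T$ and the $\mathfrak h$-internal products turns the septuple $(\chi,\omega,\mu,\theta,D,\rho,T)$ into an honest representation-cohomology datum, so that $Z_{nab}^{1}(\mathfrak g,\mathfrak h)$ literally becomes $H^{1}(\mathfrak g,\mathfrak h)$ and the target of $W$ literally becomes the quotient group $H^{(2,3)}(\mathfrak g,\mathfrak h)$.
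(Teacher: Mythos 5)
Your proposal is correct and follows the same route as the paper, which obtains this sequence by specializing Theorem \ref{Wm5} via Theorem \ref{Wm6} after noting that $Z^{1}_{nab}(\mathfrak g,\mathfrak h)$ collapses to $H^{1}(\mathfrak g,\mathfrak h)$ in the abelian case. You simply make explicit the bookkeeping (that $K$ lands in $C_{(\mu,\theta)}$ and that $W$ takes values in $H^{(2,3)}(\mathfrak g,\mathfrak h)$) that the paper leaves implicit.
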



\begin{center}{\textbf{Acknowledgments}}
\end{center}
This work was supported by the National
Natural Science Foundation of China (11871421); Natural Science
Foundation of Zhejiang Province of China (LY19A010001); and Science
and Technology Planning Project of Zhejiang Province
(2022C01118).

\begin{center} {\textbf{Statements and Declarations}}
\end{center}
 All datasets underlying the conclusions of the paper are available
to readers. No conflict of interest exits in the submission of this
manuscript.


\end{document}